\newcounter{thmcounter}
\def\imod#1{\allowbreak\mkern10mu({\operator@font mod}\,\,#1)}
\newtheorem{lemma}[thmcounter]{Lemma}
\newtheorem{proposition}[thmcounter]{Proposition}
\newtheorem{corollary}[thmcounter]{Corollary}
\newtheorem{theorem}[thmcounter]{Theorem}
\newcommand{\scrB}{\mathscr{B}}
\newcommand{\IR}{{\mathbb R}}
\newcommand{\IN}{{\mathbb N}}
\newcommand{\IG}{{\mathbb G}}
\newcommand{\IGm}{{\mathbb G}_{\mathrm m}}
\newcommand{\IC}{{\mathbb C}}
\newcommand{\IRan}{{\mathbb R}_{\mathrm{an}}}
\newcommand{\IZ}{{\mathbb Z}}
\newcommand{\IF}{{\mathbb F}}
\newcommand{\IQ}{\mathbb{Q}}
\newcommand{\ssm}{\smallsetminus}
\newcommand{\mat}[2]{{\mathrm{Mat}}_{#1}({#2})}
\newcommand{\gl}[2]{{\mathrm{GL}}_{#1}({#2})}
\newcommand{\ma}[1]{\mathrm{m}({#1})}
\newcommand{\Ma}[1]{\mathrm{M}({#1})}
\renewcommand{\subset}{\subseteq}
\newcommand{\nbhd}[1]{{\mathcal{N}}({#1})}
\newcommand{\no}[2]{\mathrm{N}_{#2}({#1})}
\newcommand{\alg}[1]{{#1}^{\mathrm{alg}}}
\newcommand{\vol}[1]{\mathrm{vol}({#1})}
\newcommand{\ordS}{\mathrm{ord}}
\newcommand{\cX}{\mathcal{X}}
\newcommand{\cY}{\mathcal{Y}}
\newcommand{\cH}{\mathcal{H}}
\newcommand{\trop}[1]{\mathrm{Trop}({#1})}
\newcommand{\sv}{y}
\begin{document}

\title{The Norm of  Gaussian Periods}

\author{P. Habegger}
\address{Department of Mathematics and Computer Science, University of Basel, Spiegelgasse 1, 4051 Basel,
Switzerland}
\email{philipp.habegger\makeatletter @\makeatother unibas.ch}




\keywords{Gaussian period, Mahler measure, Diophantine Approximation,
  Ergodic Theory and Number Theory} 
\subjclass[2010]{Primary: 11R06, 
secondary: 05A15, 11J71, 11K60, 37A45}



\begin{abstract}
Gaussian periods are cyclotomic integers with a long history in
number theory and
connections to problems in combinatorics. 
We investigate the  asymptotic behavior of the absolute norm of a Gaussian
period and provide a rate of convergence in a 
case  of Myerson's Conjecture for 
periods of arbitrary odd length.
Our
method involves a result of Bombieri, Masser, and Zannier on unlikely
intersections in the algebraic torus as well as  work of the
author on
 the diophantine approximations to a set
definable in an o-minimal structure. In the appendix we make a result
of Lawton on Mahler measures quantitative.
\end{abstract}

\maketitle

\section{Introduction}

Let $f\ge 1$ be an  integer and $p$ a prime number. We are interested
 in the asymptotic behavior of the norm of
\begin{equation}
\label{eq:cyclosum}
  \zeta^{a_1}+\zeta^{a_1}+\cdots +\zeta^{a_f}
\quad\text{where}\quad
\zeta = e^{2\pi \sqrt{-1} /p}
\end{equation}
as  $a_1,\ldots,a_f\in\IZ $ and $p$ vary. 

These cyclotomic integers appear naturally in algebraic number theory. 
We identify the Galois
group of $\IQ(\zeta)/\IQ$ with $\IF_p^\times
=\IF_p\ssm\{0\}$.
Say $a_1,\ldots,a_f$ represent the elements of a subgroup
$G\subset\IF_p^\times$ of order $f$. Then the sum (\ref{eq:cyclosum}) is the
trace of $\zeta$ relative to the subfield of $\IQ(\zeta)$ fixed by
the said subgroup and it is
  called a Gaussian period.
It has
 degree $k = [\IF_p^\times :G] = (p-1)/f$ over $\IQ$, we refer to
 Chapter 10.10 of Berndt, Evans, and Williams' book \cite{BerndtEvansWilliams} for these
 and other facts.
A Gaussian period  
 can be expressed in terms of $p$ and $f$  for small values of $k$.
 Indeed,
 if $k=1$ then 
 $G=\IF_p^\times$ and the Gaussian period is of course $ \zeta + \zeta^2 +
 \cdots+\zeta^{p-1}= -1$. Gauss evaluated the sum if
$k=2$ and the minimal polynomial of a Gaussian period
has been computed if $k\le 4$.

The absolute norm of a Gaussian period appears 
 in combinatorial problems, cf.  Myerson's work
\cite{Myerson:CombI,Myerson:CombII}. 
Let $A_1,\ldots,A_k\in\IF_p^\times$
denote a complete set of representatives of $\IF_p^\times /G $.
Then the cardinality satisfies 
\begin{equation}
\label{eq:countN} 
  \# \{ (x_1,\ldots,x_k) \in G^k : A_1 x_1 + \cdots + A_k x_k = 0 \}
    - \frac 1p f^k = \frac{p-1}{p}\Delta 
\end{equation}
where 
\begin{equation*}
  \Delta=  \prod_{t \in \IF_p^\times/G}
\left(\sum_{g\in G} \zeta^{tg}\right);
\end{equation*}
 note that  
 $\zeta^t$ is well-defined  for $t\in\IF_p$ as is the sum.  
 If
$A_1 x_1+\cdots +A_k x_k$ were to attain all values of $\IF_p$
 equally often then $\Delta$ would vanish. 
As Myerson  \cite{Myerson:CombI} observed, this linear form attains
\textit{non-zero} values  equally often.
It is tempting to interpret  (\ref{eq:countN}) as an error term. 
But note that the 
 trivial estimate $\left|\sum_{g\in G}\zeta^{tg}\right|\le f$
leads to the upper bound  $(1-p^{-1})f^k$ for the modulus of
(\ref{eq:countN}). This bound exceeds 
$p^{-1}f^k$ for all $p\ge 3$. In this paper we will improve on the trivial bound 
if the length $f$ of the Gaussian period is a fixed prime and
$p$ is large. 

When well-defined, the logarithmic absolute norm of the Gaussian
period is
\begin{equation}
\label{eq:logmean}
\frac 1k \log|\Delta| =   \frac{1}{p-1}\sum_{t=1}^{p-1} \log\left|   
\sum_{g\in G} \zeta^{tg}
\right|.
\end{equation}
Our Theorem \ref{thm:sprime} below determines the asymptotic behavior
of this value as
$p\rightarrow\infty$ when $f=\# G$  is a fixed prime.
Before stating our first result, we survey
 what is known for groups $G$ of order $f$. 

Certainly (\ref{eq:logmean}) vanishes if $G$ is trivial. 
If $f =2$, then $G=\{\pm 1\}$ and $p$ is odd. 
Note that $\zeta + \zeta^{-1} = \zeta^{-1}(\zeta^2+1)$ is a unit in
$\IQ(\zeta)$. So (\ref{eq:logmean}) is again zero. The value of
$\Delta$, i.e. its sign, can be computed using the Kronecker symbol.

Already the case $f=3$ is more involved. It requires the logarithmic Mahler measure 
\begin{equation*}
  \ma{P} = \int_{0}^1 \cdots \int_0^1 \log \left| P\left(e^{2\pi
    \sqrt{-1}x_1},\ldots,e^{2\pi \sqrt{-1}x_n}\right)\right|
dx_1\cdots dx_n,
\end{equation*}
 of a non-zero Laurent polynomial $P\in
\IC[X_1^{\pm 1},\ldots,X_n^{\pm 1}]$; for the fact that this integral converges and
other properties we refer to Chapter 3.4 in Schinzel's book \cite{Schinzel}.
If
 $p\equiv 1 \imod 3$, then  $\IF_p^\times$ contains an element
$\theta$ of order $3$.  Myerson, cf. Lemma 21 \cite{Myerson:CombII},  proved
\begin{equation}
\label{eq:myersonlimit}
  \frac{1}{p-1} \sum_{t=1}^{p-1} \log\left|\zeta^t +
  \zeta^{t \theta}+\zeta^{t \theta^2}\right| = \ma{1+X_1+X_2}+o(1)
\end{equation}
as $p\rightarrow \infty$.
  The logarithmic Mahler measure of $1+X_1+X_2$
 was evaluated by Smyth \cite{SmythMahler}  and equals
 $L'(-1,\chi)$ where $\chi$ is the non-trivial character modulo $3$
and $L(\cdot,\chi)$ is its associated Dirichlet $L$-function. 
Duke \cite{Duke:Combinatorial} gave a new proof of (\ref{eq:myersonlimit})
which extended to a larger class of vectors in $\IF_p^3$ containing
the exponent vector $(1,\theta,\theta^2)$. Moreover, he  provided a rate of convergence.

Due to a fortunate factorization, the case $f=4$ is similar to  order
$2$.  
Indeed, 
Theorem 6 \cite{Myerson:CombII} implies 
$\Delta = \pm 1$ if $p\equiv 1 \imod 4$ and also determines the sign. 
So the limit in question is again zero.

For higher order, the approaches in \cite{Myerson:CombII} and
\cite{Duke:Combinatorial} break down. 
 But Myerson's Conjecture \cite{Myerson:CombI} 
predicts convergence of (\ref{eq:logmean}) as $p\rightarrow \infty$ 
and  the limit point.
The full conjecture is more general as it also covers subgroups of
$\IF_{q}^\times$ where $q$ is a fixed power of $p$.

Myerson's Conjecture has an ergodic flavor. Indeed,
using methods from ergodic theory,
 Lind, Schmidt, and Verbitskiy 
\cite{LindSchmidtVerbitskiy} proved  convergence in the following
setting.
They suitably
 averaged the value of the logarithm of the modulus of a polynomial
evaluated at a finite subgroup of roots of unity. 
Their polynomials are required to satisfy an intersection theoretical 
property with respect to the maximal compact subgroup of
$(\IC\ssm\{0\})^n$. 
They  computed the limit  of this average for certain sequences 
of groups of roots of
unity.

In this paper we concentrate on the special case when $G$ has fixed odd
order. We   prove that (\ref{eq:logmean}) converges and compute the
limit.

Our method is based on a recent result  \cite{approxdefinable}  of the author on diophantine
approximation of  sets definable in a polynomially bounded o-minimal
structure. It counts strong rational approximations to a definable
set and is related to the Pila-Wilkie Counting Theorem
\cite{PilaWilkie}. 
Our approach is quantitative in the sense that it
can provide a rate of convergence.

The following theorem is  the special case of our main
result when $\#G$  is an
odd prime.
We refer to Corollary \ref{cor:myerson} below for a more general
statement.
This corollary contains the case  $q=p$ of Myerson's
Conjecture.

\begin{theorem}
  \label{thm:sprime}
Suppose $f$ is an odd prime. For a prime  $p$  with $p\equiv 1
\imod {f}$ let  $G_p\subset\IF_p^\times$ denote the subgroup of order $f$. Then
  \begin{equation}
\label{def:mahlermeasurethm1}
    \frac{1}{p-1} \sum_{t=1}^{p-1} \log\left| \sum_{g\in G_p} e^{2\pi\sqrt{-1}\frac{tg}{p}}
    \right| = \ma{1+X_1+\cdots + X_{f-1}} + O\left(p^{-\frac{1}{5(f-1)^2}}\right)
  \end{equation} 
as $p\rightarrow\infty$; in particular, the 
logarithm is well-defined for all sufficiently large  $p$. 
\end{theorem}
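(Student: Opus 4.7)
The plan is to convert the left hand side into a finite average of $\log|P|$ for the polynomial $P(X_1,\ldots,X_{f-1}) = 1 + X_1 + \cdots + X_{f-1}$, sampled along a cyclic subgroup of the compact unit torus in $\IGm^{f-1}(\IC)$. Let $\theta\in\IF_p^\times$ generate $G_p$ and let $a_j \in \{1,\ldots,p-1\}$ be the residue of $\theta^j - 1$ modulo $p$. Since $1 \in G_p$, factoring $e^{2\pi\sqrt{-1}t/p}$ out of the inner sum rewrites the left hand side as
\begin{equation*}
\frac{1}{p-1}\sum_{t=1}^{p-1} \log\left|P\left(e^{2\pi\sqrt{-1}ta_1/p},\ldots,e^{2\pi\sqrt{-1}ta_{f-1}/p}\right)\right|,
\end{equation*}
so the task reduces to showing that this finite average over $p$-th roots of unity approximates $\ma{P}$ with the stated power-saving error.

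I would first apply the quantitative Lawton-style inequality promised in the appendix. It should compare the finite average to $\ma{P}$ with an error controlled by two pieces of data: (i) the minimum norm of a non-zero integer vector $(m_1,\ldots,m_{f-1})$ with $m_1 a_1 + \cdots + m_{f-1} a_{f-1} \equiv 0 \imod p$, and (ii) the number of values of $t$ for which the sample point lies close to the amoeba $\{P=0\}$ on the compact torus. A large value of (i) yields a power saving from approximate equidistribution of the cyclic subgroup, while bounding (ii) controls the logarithmic singularity contribution.

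To lower-bound (i), I would invoke the Bombieri--Masser--Zannier theorem on unlikely intersections in the algebraic torus. A short integer relation among the $a_j$ forces the tuple $(\theta,\theta^2,\ldots,\theta^{f-1})\in\IGm^{f-1}$ to lie on an algebraic subgroup of small complexity; the trivial relations come from $\theta^f=1$, and the primality of $f$ prevents $G_p$ from having intermediate subgroups, so any short non-trivial relation corresponds to a genuine unlikely intersection. BMZ then forces the exceptional set of such $p$ to be sparse, and one extracts a lower bound on (i) of the form $p^{c(f)}$ for some $c(f)>0$. To bound (ii), I would apply the author's counting theorem \cite{approxdefinable} for strong rational approximations to sets definable in a polynomially bounded o-minimal structure: the amoeba of $P$ is definable, sample points close to it give strong rational approximations whose denominator divides $p$, and the theorem yields a sub-polynomial bound on their count.

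The principal obstacle will be calibrating these two estimates to produce the explicit exponent $1/(5(f-1)^2)$. One introduces a height threshold $B = p^\beta$ separating the generic sample points from those close to the amoeba, then optimises over $\beta$. The factor $(f-1)^2$ should reflect the interplay between the dimension of the ambient torus in the Lawton comparison and the degree of the approximating scheme appearing in the o-minimal count, while the constant $5$ absorbs the multiplicative losses across the comparison steps. A secondary challenge is controlling the exceptional primes produced by BMZ: here the hypothesis that $f$ is an \emph{odd prime} is decisive, since it rules out the intermediate subgroup structure that could otherwise yield a dense family of exceptional $p$ for which the $a_j$ satisfy short unexpected relations.
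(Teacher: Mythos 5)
Your high-level skeleton is in the right ballpark: pass from the Gaussian period to an average of $\log|1+\zeta^{ta_1}+\cdots+\zeta^{ta_{f-1}}|$, compare that average to a Mahler measure via a quantitative Lawton theorem, and use the o-minimal counting result to control the contribution of sample points near the zero set. But there are two substantive misallocations of the machinery, and one missing step that is essential.

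First, you assign Bombieri--Masser--Zannier the job of lower-bounding the quantity you call (i), the minimal norm of a nonzero $(m_1,\ldots,m_{f-1})$ with $\sum m_j a_j \equiv 0 \pmod p$ (the paper's $\rho_p(a;\Omega)$). That bound does not come from BMZ; it comes from an elementary height argument. Writing $\alpha_j=\xi^j-1$ with $\xi$ of order $f$ and choosing $a_j\equiv\alpha_j\pmod{v_0}$ for a place $v_0\mid p$, any relation $\langle\omega,a\rangle\equiv 0\pmod p$ with $\omega\in\Omega\ssm\{0\}$ gives a nonzero $\Delta=\sum\omega_j\alpha_j\in K=\IQ(\xi)$ with $|\Delta|_{v_0}\le p^{-1}$, and the product formula then forces $|\omega|\gg p^{1/[K:\IQ]}=p^{1/(f-1)}$. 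Your sketch of converting a short relation among the residues $a_j$ into a multiplicative relation on $(\theta,\ldots,\theta^{f-1})\in\IGm^{f-1}$ does not go through: the $a_j\equiv\theta^j-1$ are additive residues in $\IF_p$, not exponents of a point in the torus over $\IC$, and a linear relation $\sum m_j a_j\equiv 0\pmod p$ is not a multiplicative relation to which BMZ applies.

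Second, and more seriously, your proposal never confronts the actual obstruction for which BMZ is needed: the one-variable lacunary polynomial $P_{a}=1+X^{a_1}+\cdots+X^{a_{f-1}}$ may have roots $\alpha$ on the unit circle $S^1$. For such roots neither Mahler's root separation bound (which requires $|\alpha|\neq 1$) nor the o-minimal count of bad $t$ gives control of $\log|\alpha^p-1|$, and the whole comparison to $\ma{P_a}$ can break down. The paper's remedy is to exploit the symmetry $\Delta_p(a)=\Delta_p(ta)=\Delta_p(ta-p\omega)$ for $p\nmid t$, $\omega\in\Omega$, and to use Dirichlet's theorem in diophantine approximation to replace $a$ by a short $a'=ta-p\omega$ with $|a'|\le cp^{1-1/m}$; then Lemma 10 (the BMZ input, via the tropical analysis of the intersection of $\{1+\sum X_i=0\}$ with $\{1+\sum X_i^{-1}=0\}$) guarantees that for $a'$ outside finitely many proper sublattices, $P_{a'}$ has no roots on $S^1\ssm\mu_\infty$. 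Without this reduction, the passage from the finite average to $\ma{P_{a'}}$ in Proposition 8 is not available. This is also where the hypothesis that $f$ is \emph{odd} is decisive: by Lemma 20, $(\xi^i-\xi^j)/(\xi^k-\xi^l)\in\IQ$ with $i,j,k,l$ pairwise distinct forces a relation $\xi^i=-\xi^j$, which cannot happen when $f$ is odd; this is exactly the hypothesis of Lemma 10 and Theorem 16. The primality of $f$ enters separately, and more simply than you suggest: it gives $\varphi(f)=f-1$, hence $\Omega=\IZ^{f-1}$, so that the multivariate Mahler measure $\ma{\Omega}$ is precisely $\ma{1+X_1+\cdots+X_{f-1}}$. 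It is not needed to sparsify an exceptional set of primes — there is no such exceptional set in the final statement; the error term is uniform in $p$.
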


We have the estimate $\ma{1+X_1+\cdots +X_{f-1}} \le \frac 12 \log f $
by Corollary 6 in Chapter 3.4 \cite{Schinzel}. This justifies  the
treatment of
(\ref{eq:countN}) as an error term if $f=\#G$ is a prime and $p$ is
large.

The value $\ma{1+X_1+\cdots + X_{f-1}}$ is non-zero if $f\ge
3$. This implies 
an amusing  corollary of Theorem \ref{thm:sprime} on Gaussian periods
that are units. 

\begin{corollary}
\label{cor:finitelymanyunits}
  Suppose $f$ is an odd prime. There are at most finitely many primes $p$
  with $p\equiv 1 \imod{f}$ such that
  \begin{equation*}
    \sum_{g\in G} e^{2\pi\sqrt{-1}\frac gp}
  \end{equation*}
is an algebraic unit where $G\subset\IF_p^\times$ denotes the subgroup
of order $f$. 
\end{corollary}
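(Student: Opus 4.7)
The corollary follows directly from the quantitative statement of Theorem \ref{thm:sprime}. Set $\alpha_p := \sum_{g \in G_p} e^{2\pi\sqrt{-1}\, g/p}$ and $k := (p-1)/f$. As $t$ runs over $\IF_p^\times$, the value $\sum_{g \in G_p} e^{2\pi\sqrt{-1}\, tg/p}$ depends only on the coset $tG_p$, and each of the $k$ cosets is represented by exactly $f$ elements of $\{1,\ldots,p-1\}$. The product over cosets is the relative norm $N_{K_p/\IQ}(\alpha_p)$, where $K_p \subset \IQ(e^{2\pi\sqrt{-1}/p})$ is the degree-$k$ subfield fixed by $G_p$. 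Consequently
\[
\frac{1}{p-1}\sum_{t=1}^{p-1}\log\left|\sum_{g\in G_p} e^{2\pi\sqrt{-1}\,tg/p}\right| = \frac{1}{k}\log\bigl|N_{K_p/\IQ}(\alpha_p)\bigr|.
\]

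Suppose now that $\alpha_p$ is an algebraic unit. Since $N_{K_p/\IQ}(\alpha_p)$ is an integer power of the absolute norm $N_{\IQ(\alpha_p)/\IQ}(\alpha_p) = \pm 1$, we have $|N_{K_p/\IQ}(\alpha_p)| = 1$, so the left-hand side of the identity above vanishes. Combined with Theorem \ref{thm:sprime}, this forces
\[
\ma{1+X_1+\cdots+X_{f-1}} = O\bigl(p^{-1/(5(f-1)^2)}\bigr).
\]
Because $f$ is an odd prime, in particular $f \geq 3$, and the remark immediately preceding the corollary records that $\ma{1+X_1+\cdots+X_{f-1}}$ is strictly positive in this range. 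The right-hand side tends to $0$ as $p \to \infty$, so the displayed equation can hold for only finitely many primes $p \equiv 1 \imod{f}$, which is the desired conclusion.

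Little genuine obstacle stands in the way: the argument is essentially a translation of the quantitative limit in Theorem \ref{thm:sprime} combined with the positivity of the limiting Mahler measure. The only mildly subtle point is the degenerate situation where $\alpha_p$ fails to generate $K_p$, but the identification of the displayed sum with $\frac{1}{k}\log|N_{K_p/\IQ}(\alpha_p)|$ and the characterization of units via this norm are unaffected, since $N_{K_p/\IQ}(\alpha_p) = N_{\IQ(\alpha_p)/\IQ}(\alpha_p)^{[K_p:\IQ(\alpha_p)]}$.
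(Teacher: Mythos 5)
Your proof is correct and takes essentially the same route as the paper: use Theorem \ref{thm:sprime}, observe that if the Gaussian period is a unit the left-hand side of (\ref{def:mahlermeasurethm1}) vanishes (since it equals $\frac{1}{k}\log|\Delta|$ with $\Delta$ the relative norm, which is $\pm 1$), and conclude that $\ma{1+X_1+\cdots+X_{f-1}}$ would have to be $O(p^{-1/(5(f-1)^2)})$, which contradicts its positivity for large $p$. You spell out the norm identity more explicitly than the paper does (the paper simply notes that the period is an algebraic integer and invokes Theorem \ref{thm:sprime}), which is a welcome clarification. The one thing you do not actually establish is the positivity of $\ma{1+X_1+\cdots+X_{f-1}}$ for $f\ge 3$: you cite the remark preceding the corollary, but that remark is an unproved assertion in the introduction; the paper's proof supplies the missing justification by appealing to the higher-dimensional Kronecker classification of integral polynomials with vanishing logarithmic Mahler measure (Boyd, Lawton, Smyth) together with the irreducibility of $1+X_1+\cdots+X_{f-1}$. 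A complete version of your argument should include that step or at least a pointer to it.
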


   Gaussian periods and their generalizations were
investigated by Duke, Garcia, and Lutz  \cite{DukeGarciaLutz} from
several points of view. In Theorem 6.3 they prove  that the
Galois orbit of a Gaussian period becomes equidistributed in a suitable
sense. 
Our   average (\ref{eq:logmean}) involves the logarithm whose
singularity at the origin often makes applying equidistribution
directly impossible, see for example Autissier's example \cite{Autissier:counterexample}.

Our main technical result is Theorem \ref{thm:mainapp} below. It
essentially amounts to a convergence result
when  averaging over   groups of roots of unity of
prime order. It is used to deduce the theorem and corollary above.

 The main
difficult when $f=\# G \ge 5$  is that the integrand 
\begin{equation*} 
  1+X_1 + \cdots  + X_{f-1}
\end{equation*}
 in the logarithmic Mahler measure (\ref{def:mahlermeasurethm1}) has  singularities along a
 positive dimensional real semi-algebraic set. 

Our approach requires new tools and we now give  a brief
 overview of the proof of Theorem \ref{thm:sprime}. 
As in Duke's work \cite{Duke:Combinatorial} we start with a
 basic observation; to simplify notation we set $n=f-1$. Let  $a=(a_1,\ldots,a_n)\in\IZ^n$,
 then 
 \begin{equation*}
   \prod_{t=1}^{p-1}\left(   1+\zeta^{t a_1}+\cdots +\zeta^{t a_n}\right)
 \end{equation*}
is the product of
\begin{equation*}
  P_a=1+X^{a_1}+\cdots + X^{a_n}
\end{equation*}
  evaluated at all roots
of unity of order $p$. If $a_n> a_{n-1} > \cdots > a_1 > 0$, then $P_a$
is a monic polynomial and the product above is
 the resultant of $P_a$ and 
$1+X+\cdots +X^{p-1}$. If this resultant is non-zero, 
then by symmetry properties  we
find that (\ref{eq:logmean}) equals
\begin{equation}
\label{eq:altnorm}
-\frac{\log(n+1)}{p-1} + \frac{1}{p-1}\sum_{i=1}^d \log|\alpha_i^p-1|.
\end{equation}
where $\alpha_1,\ldots,\alpha_d$ are the roots of $P_a$.
Comparing $\frac{1}{p-1} \log |\alpha_i^p-1|$ to the
local contribution  $\log
\max\{1,|\alpha_i|\}$ of the logarithmic Mahler measure $\ma{P_a}$
is a crucial aspect of the problem at hand;
see Section \ref{sec:notation} for details on the Mahler measure.
Indeed, by a 
result of Lawton \cite{Lawton} the value  $\ma{P_a}$ converges towards the
logarithmic Mahler measure of a multivariate  polynomial as in Theorem \ref{thm:sprime} if $|a|\rightarrow \infty$
for $a$ in sufficiently
general position.
In a self-contained appendix, we provide a quantitative version of
Lawton's Theorem, see Theorem \ref{thm:lawtonquant}.

 Baker's theory on linear
forms in logarithms yields a lower bound for non-zero values of
$|\alpha_i^p-1|$. But the current estimates  are not strong enough to
directly establish Theorem   
\ref{thm:sprime}, see Duke's comment after the proof of this Theorem 3
\cite{Duke:Combinatorial}. 
However, as we shall see below,  strong lower bounds for
$|\alpha_i^p-1|$ are
available if $|\alpha_i|\not=1$. Indeed, 
 $|\alpha_i^p-1|\ge ||\alpha_i|^p-1|\ge ||\alpha_i|-1|$. 
If $|\alpha_i|\not=1$ we will 
use an old result of
Mahler on the separation of distinct roots of a polynomials to bound
$\left||\alpha_i|-1\right|$ from below. 
If $\alpha_i$
lies in $\mu_\infty$, the set of all roots of unity in $\IC$, then
a sufficiently strong lower bound for $|\alpha_i^p-1|$ follows from
simpler considerations.

This estimate gives  us sufficient control  on each term in the sum
(\ref{eq:altnorm}) subject to the condition that 
$P_a$ does not have any root in $S^1\ssm\mu_\infty$, 
here $S^1$ is the unit circle in $\IC$. 
But it seems unreasonable to expect this hypothesis to hold for
all $a$. 
To address this concern we use  symmetry in
(\ref{eq:logmean}). Indeed, this mean is invariant under translating
$a$ by an element of $p\IZ^n$ and also by replacing $a$ by $ta$ with
$t\in\IZ$ coprime to $p$. We exploit this symmetry by  using
 a  result of Bombieri, Masser, and Zannier
\cite{BMZGeometric} on unlikely intersections. This in combination
with Dirichlet's Theorem in diophantine approximation 
 allows us to assume that
$P_a$ has no roots on $S^1\ssm\mu_\infty$ after a suitable transformation as
described above. 
Here the parity assumption on $f$ in Corollary
\ref{cor:myerson} below is used.

At this point we have a sufficient lower bound for each term of
the sum (\ref{eq:altnorm}). However, the method cannot proceed if too many terms are close to
this bound.  Duke \cite{Duke:Combinatorial}
 already use the following basic principle. Suppose
that for some $\alpha=\alpha_i$  the distance $|\alpha^p-1|$ is small, i.e.
at most a fixed power of $p^{-1}$. Than one can expect that
$\alpha$ is close to some $\zeta\in\mu_p$ where $\mu_p$ is the set of
roots of unity in $\IC$ of order dividing $p$. As
$P_a(\alpha)=0$ we find that 
\begin{equation}
\label{eq:smallsum}
\left|  1  + \zeta^{a_1} + \cdots + \zeta^{a_n} \right|
\end{equation}
is small. 
Myerson \cite{Myerson:sumofrootsof1} proved a lower bound for non-vanishing sums of roots
of unity if $n=1,2,$ and $3$. His estimates are
are polynomial in $p^{-1}$ and
 are strong enough  to imply Duke's result.
However, for fixed $n\ge 4$ only exponential bounds such as 
$(n+1)^{-p}$ are known to hold in general. They  are not good enough for our purposes. 

If $|\alpha_i^p-1|$ is small for many $i$, we are able to show that
(\ref{eq:smallsum}) is small for many $p$-th roots of unity
$\zeta$. This situation can be analyzed using 
the following theorem that counts small sums of roots of unity of
prime order.
Its proof requires recent work of the author \cite{approxdefinable}
on diophantine approximations on  definable sets in
an o-minimal structure.

\begin{theorem}
\label{thm:countrootsof1}
Let $n\ge 1$. For all $\epsilon > 0$ there exist constants $c=c(n,\epsilon)\ge 1$ and 
$\lambda = \lambda(n,\epsilon)\ge 1$ with the following property. 
If $p$ is a prime and $\zeta_1,\ldots,\zeta_n\in\mu_p$ satisfy
 $1+\zeta_1 + \cdots +
\zeta_n\not=0$, then 
\begin{equation}
\label{eq:count}
  \#\left \{ t\in \IF_p : |1+\zeta_1^t+ \cdots
  +\zeta_n^t|<c^{-1}p^{-\lambda} \right\} \le cp^\epsilon. 
\end{equation}
\end{theorem}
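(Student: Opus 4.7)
The plan is to interpret the bound (\ref{eq:count}) as a diophantine approximation problem in the real torus $\IR^n/\IZ^n$: I count rational points of common denominator $p$, constrained to lie in a fixed cyclic subgroup, that cluster in a thin tubular neighborhood of a real-analytic subvariety. Writing $\zeta_j = e^{2\pi\sqrt{-1} m_j/p}$ and $x^{(t)} = (tm_1/p,\ldots,tm_n/p) \bmod \IZ^n$, the quantity $|1+\zeta_1^t+\cdots+\zeta_n^t|$ is comparable to the distance from $x^{(t)}$ to the $\IRan$-definable set
\[
V_\IR = \left\{x \in (\IR/\IZ)^n : 1 + e^{2\pi\sqrt{-1} x_1} + \cdots + e^{2\pi\sqrt{-1}x_n} = 0\right\},
\]
and the cyclic subgroup $C = \{x^{(t)} : t\in\IF_p\}$ has order $1$ or (the nontrivial case) $p$.

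My first step is to invoke the author's approximation theorem from \cite{approxdefinable}. That result strengthens the Pila--Wilkie Counting Theorem \cite{PilaWilkie} by controlling rational points lying \emph{near} a set definable in a polynomially bounded o-minimal structure. Applied to $V_\IR$, it should furnish constants $c_0,\lambda_0$ depending on $n$ and $\epsilon$ such that at most $c_0 p^\epsilon$ rational points of $(\IR/\IZ)^n$ of denominator $p$ lie within $c_0^{-1}p^{-\lambda_0}$ of the \emph{transcendental part} $V_\IR\setminus V_\IR^{\mathrm{alg}}$. This handles all but the algebraic contributions to (\ref{eq:count}).

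For the algebraic part I would use that each connected positive-dimensional real semi-algebraic subset of $V_\IR$ is, after exponentiation, contained in a coset $gH$ of a proper subtorus $H\subsetneq \IGm^n$ with $gH$ itself lying inside the complex hypersurface $W = \{1+X_1+\cdots+X_n=0\}\subset\IGm^n$. The Bombieri--Masser--Zannier theorem \cite{BMZGeometric} then guarantees that $W$ contains only finitely many maximal such cosets. For any one of them, $C\cap gH$ is a coset of the subgroup $C\cap H$ in the cyclic group $C$ of prime order $p$; so it is either empty, a single point, or all of $C$. The last case would force $(\zeta_1,\ldots,\zeta_n)\in gH\subset W$, i.e.\ $1+\zeta_1+\cdots+\zeta_n = 0$, contradicting the hypothesis. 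Hence $|C\cap gH|\leq 1$, and if $\lambda$ is chosen large enough to produce a uniform quantitative separation between the points of $C$ off $gH$ and the coset itself, the total algebraic contribution is $O(1)$. Summing with the transcendental bound yields the claimed $O(p^\epsilon)$.

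The hard part will be this final quantitative separation: rational points in $C$ may a priori approach a coset $gH\subset W$ without landing on it, and one must produce a lower bound of size $p^{-\lambda}$ on their distance that is uniform in the $\zeta_j$ and depends only on $n$ and $\epsilon$. This is precisely the interface at which the primality of $p$ (making $C$ a rigid subgroup of $\IGm^n$) and the algebraicity of $gH$ must collaborate, and at which the o-minimal approximation machinery of \cite{approxdefinable} meets the unlikely intersections input of \cite{BMZGeometric}; it is also where the exponents $c$ and $\lambda$ appearing in the statement of the theorem are ultimately determined.
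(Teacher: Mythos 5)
There is a genuine gap, and it lies exactly where you flag the ``hard part.'' Your plan decomposes the count into a transcendental contribution (handled by Theorem~\ref{thm:approx2}) and an algebraic contribution, and for the latter you want a uniform lower bound of size $p^{-\lambda}$ on the distance from any $x^{(t)}$ \emph{off} a coset $gH\subset W$ to that coset. No such bound is available: this is precisely the kind of lower bound that Baker's theory would supply, and as the introduction of the paper notes (quoting Duke), those estimates are not strong enough to close the argument. You have not produced an alternative mechanism, so the algebraic contribution remains uncontrolled. A secondary issue is the description of $\alg{Z}$: the positive-dimensional semi-algebraic pieces of $Z$ (in the $x$-coordinates) do not in general exponentiate to torus cosets contained in $W$; by Ax--Schanuel they are rational affine subspaces on which some proper sub-sum $1+\sum_{j\in J}e^{2\pi\sqrt{-1}x_j}$ vanishes identically, while the remaining coordinates are unconstrained. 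For $|J|\ge 2$ this is not a coset condition, so the Bombieri--Masser--Zannier finiteness you invoke does not directly characterize $\alg{Z}$. (BMZ does appear in the paper, but in Lemma~\ref{lem:bmz}, which addresses an entirely different step.)

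The paper avoids the separation problem by a different idea that your proposal lacks: \emph{induction on $n$}. Points $\tilde{q}_t$ falling into the tube around $\alg{Z}$ yield, via Ax's description and the triangle inequality, a \emph{small} (not vanishing) proper sub-sum $\bigl|1+\sum_{j\in J}\zeta_j^t\bigr|\ll p^{-\theta\lambda\delta}$ with $\emptyset\ne J\subsetneq\{1,\dots,n\}$. If $1+\sum_{j\in J}\zeta_j\ne 0$ one applies the theorem inductively with fewer variables; if instead $1+\sum_{j\in J}\zeta_j=0$, then (since the full sum is non-zero by hypothesis) the normalized complementary sum $1+\sum_{j\in I\setminus\{j_0\}}\zeta_j\zeta_{j_0}^{-1}$ is non-zero and one inducts on $|I|-1\le n-2$ variables using the original hypothesis $|1+\zeta_1^t+\cdots+\zeta_n^t|<c^{-1}p^{-\lambda}$. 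This recursion, not a separation estimate, is what tames the algebraic locus and ultimately determines how $c$ and $\lambda$ are chosen. Without it your argument cannot be completed as written.
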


As this paper was being finished up, Dimitrov \cite{Dimitrov}
announced a extension to more general polynomials
of Lind, Schmidt, and Verbitskiy's work  for  subgroups
that are Cartesian powers.
His approach used ideas from diophantine
approximation and is independent from ours.

We hope to expand the connection between counting points approximating
a definable set and questions related to ergodic theory in future
work. 

\section{Notation}
\label{sec:notation}

The supremum norm on $\IR^n$ is $|\cdot|$ for any $n\ge 1$. 
We have already seen the definition of the  logarithmic Mahler measure $\ma{P}$
if $P\in \IC[X_1^{\pm 1},\ldots,X_n^{\pm 1}]\ssm\{0\}$. The Mahler
measure of $P$ is
$\Ma{P} = e^{\ma{P}}$. 

The absolute logarithmic Weil height, or just height, of  an algebraic
number $\alpha$  with minimal polynomial $P$ in $\IZ[X]$ and
leading term $p_0\ge 1$ is
\begin{equation*}
  h(\alpha) = \frac{1}{[\IQ(\alpha):\IQ]} \ma{P}
 = \frac{1}{[\IQ(\alpha):\IQ]} \log\left(p_0 \prod_{\substack{z\in \IC \\ P(z)=0}}
\max\{1,|z|\} \right) 
\end{equation*}
where the second equality follows from Jensen's Formula. 
We write $H(\alpha)= e^{h(\alpha)}$. Moreover,  we set
$H(\alpha_1,\ldots,\alpha_n) = \max\{
H(\alpha_1),\ldots,H(\alpha_n)\}$ if
$\alpha_1,\ldots,\alpha_n$ are algebraic.

\section{Algebraic Numbers Close to the Unit Circle}

An algebraic number $\alpha\in \IC\ssm\{1\}$ of degree 
$D=[\IQ(\alpha):\IQ]$ can be bounded away from $1$ using
Liouville's Inequality, Theorem 1.5.21 \cite{BG}, 
\begin{equation*}
\log  |\alpha-1| 
\ge -D\log 2 - D h(\alpha).
\end{equation*}
The modulus $|\alpha| = \sqrt{\alpha\overline\alpha}$ is again an
algebraic number, here and below $\overline\cdot$ denotes complex conjugation. Its height satisfies
\begin{equation*}
  h(|\alpha|) \le \frac 12 h(\alpha\overline\alpha)\le  h(\alpha)
\end{equation*}
since $h(\overline\alpha)=h(\alpha)$. 
If $\alpha$ is real, then clearly $\IQ(|\alpha|)=\IQ(\alpha)$. 
However, for $D\ge 2$  we only have
$[\IQ(|\alpha|):\IQ]\le D(D-1)$ and equality is possible. So Liouville's Inequality 
applied to $|\alpha|$ gives
\begin{equation*}
\log  \left| |\alpha|-1 \right| \ge -D(D-1)\log 2 - D(D-1)h (\alpha)
\end{equation*}
if $|\alpha|\not=1$. 
We will use a result of Mahler to improve on the
dependency in $D$ in front of $\log 2$.

\begin{theorem}[Mahler]
Let $P\in\IZ[X]$ be a polynomial with $D=\deg P \ge 2$. If
$z,z'\in\IC$ are distinct roots of $P$, then 
\begin{equation*}
|z'-z| > \sqrt 3 D^{-(D+2)/2} \Ma{P}^{-(D-1)}.
\end{equation*}
\end{theorem}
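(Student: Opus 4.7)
The plan rests on two classical inputs: for $P \in \IZ[X]$ squarefree, the discriminant is a nonzero integer and so $|\mathrm{disc}(P)| \ge 1$; and Hadamard's inequality, applied to a suitable modification of the Vandermonde matrix of the roots, gives a sharp upper bound on the remaining factors of the discriminant in terms of $\Ma{P}$.

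I would first reduce to the case where $P$ is squarefree. If $P$ has multiple roots, replace it by its squarefree kernel $P_0 \in \IZ[X]$: then $z, z'$ remain distinct roots of $P_0$, $\deg P_0 \le D$, and $\Ma{P_0} \le \Ma{P}$ by multiplicativity of the Mahler measure together with the elementary fact that every nonzero integer polynomial has Mahler measure at least $1$. The right-hand side $\sqrt{3}\, D^{-(D+2)/2}\Ma{P}^{-(D-1)}$ is decreasing in $D$ (for fixed $\Ma{P} \ge 1$) and in $\Ma{P}$ (for fixed $D \ge 2$), so the bound for $P_0$ implies the bound for $P$. Hereafter assume $P$ is squarefree with distinct roots $z_1 = z, z_2 = z', z_3, \ldots, z_D$ and leading coefficient $a_D$.

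From the identity $\mathrm{disc}(P) = a_D^{2(D-1)} V^2$ with $V = \prod_{i<j}(z_j - z_i)$ and $|\mathrm{disc}(P)| \ge 1$, the task reduces to proving
\begin{equation*}
\frac{|V|^2}{|z_1 - z_2|^2} \le \frac{D^{D+2}}{3} \cdot \frac{\Ma{P}^{2(D-1)}}{|a_D|^{2(D-1)}},
\end{equation*}
since dividing into $|a_D|^{2(D-1)} |V|^2 \ge 1$ then yields $|z_1 - z_2|^2 \ge 3/(D^{D+2}\Ma{P}^{2(D-1)})$. For this upper bound, I would interpret $V/(z_2 - z_1)$ as the determinant of a $D \times D$ modified Vandermonde. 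After swapping $z_1 \leftrightarrow z_2$ if necessary so that $\max(1, |z_1|) \ge \max(1, |z_2|)$, replace the first column $(z_1^{i-1})_{i=1}^{D}$ of $\det(z_j^{i-1})$ by the divided-difference column whose $i$-th entry is $(z_1^{i-1} - z_2^{i-1})/(z_1 - z_2)$; elementary column operations show the modified matrix has determinant $\pm V/(z_1 - z_2)$. Applying Hadamard's inequality column by column, using $\sum_{k=0}^{D-1}|z_j|^{2k} \le D\max(1, |z_j|)^{2(D-1)}$ on the unmodified columns and $\sum_{i=2}^{D}(i-1)^2 \max(1, |z_1|)^{2(i-2)} \le (D^3/3)\max(1, |z_1|)^{2(D-2)}$ on the modified column, and then absorbing the factor $\max(1, |z_1|)^{2(D-2)}$ into $\prod_k \max(1, |z_k|)^{2(D-1)} = (\Ma{P}/|a_D|)^{2(D-1)}$ via $\max(1, |z_1|) \ge 1$, yields the desired bound.

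The main subtlety is precisely this asymmetric column operation: a more naive approach, such as bounding $\prod_{(i,j) \ne (1,2)}|z_i - z_j|^2$ directly via $|z_i - z_j| \le 2\max(1, |z_i|)\max(1, |z_j|)$, loses a factor of order $2^D$ in the constant. Only by performing one divided-difference column operation on whichever of $z_1, z_2$ has the larger $\max(1, |\cdot|)$ do the exponents on $\max(1, |z_1|)$ and $\max(1, |z_2|)$ assemble into exactly $2(D-1)$ each, producing the sharp constant $\sqrt{3}\, D^{-(D+2)/2}$ upon taking square roots.
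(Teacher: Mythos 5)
Your proof is correct. Both you and the paper begin with the same reduction to the squarefree case, noting that passing to the squarefree part of $P$ can only decrease the degree and the Mahler measure and hence only increase the right-hand side. At that point the paper simply cites Mahler's Theorem 2 from \cite{Mahler:DiscIneq} (together with $|\mathrm{disc}|\ge 1$), whereas you reprove the underlying inequality in full: the divided-difference column operation giving $\det = \pm V/(z_1-z_2)$, the asymmetric choice of which of $z_1,z_2$ to modify according to $\max(1,|\cdot|)$, the column-by-column Hadamard estimate with $\sum_{j=1}^{D-1}j^2 < D^3/3$ for the modified column and $\sum_{k=0}^{D-1}|z_j|^{2k}\le D\max(1,|z_j|)^{2(D-1)}$ for the others, and the reassembly into $\bigl(\Ma{P}/|a_D|\bigr)^{2(D-1)}$. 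This is precisely the computation behind the cited theorem, so the two routes coincide in substance; yours is self-contained where the paper delegates to the reference.
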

\begin{proof}
  We may assume that $P$ has no multiple roots over $\IC$ after
replacing it by its
squarefree part.
The estimate then  follows from  Theorem 2 \cite{Mahler:DiscIneq} as
the absolute value of the new discriminant is at least $1$. 
\end{proof}

\begin{lemma}
\label{lem:alphapminus1}
  Let $\alpha\in \IC$ be an algebraic number of degree
  $D=[\IQ(\alpha):\IQ]$. If $|\alpha|\not=1$  then 
  \begin{equation*}
\log |\alpha^p-1|\ge \log ||\alpha|-1|\ge -1-(D+1)\log(2D)
-2(2D-1)D h(\alpha)
  \end{equation*}
for all integers $p\ge 1$. 
\end{lemma}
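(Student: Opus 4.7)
The plan is to split the lemma into two claims: the elementary chain $|\alpha^p-1| \geq \bigl||\alpha|-1\bigr|$, and the explicit lower bound on $\log\bigl||\alpha|-1\bigr|$.

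The first chain is routine. From $|z-1| \geq \bigl||z|-1\bigr|$ applied to $z=\alpha^p$ we obtain $|\alpha^p-1| \geq \bigl||\alpha|^p-1\bigr|$. The step $\bigl||\alpha|^p-1\bigr| \geq \bigl||\alpha|-1\bigr|$ follows by monotonicity of $r \mapsto r^p$ away from $r=1$: if $|\alpha|>1$ then $|\alpha|^p-1 \geq |\alpha|-1 >0$, while if $|\alpha|<1$ then $1-|\alpha|^p \geq 1-|\alpha| >0$. This reduces the lemma to bounding $\log\bigl||\alpha|-1\bigr|$ from below.

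I would then split by the size of $|\alpha|$. If $|\alpha| \leq 1/2$ or $|\alpha|\geq 2$, then $\bigl||\alpha|-1\bigr|\geq 1/2$ and hence $\log\bigl||\alpha|-1\bigr| \geq -\log 2 > -1$; since the right-hand side of the claimed bound is at most $-1$, this trivial estimate suffices. So assume $1/2 < |\alpha| < 2$ with $|\alpha|\neq 1$. The idea is to apply Mahler's theorem to the polynomial $P \cdot P^* \in \IZ[X]$, where $P$ is the minimal polynomial of $\alpha$ and $P^*(X) = X^D P(1/X)$ is its reciprocal. Because $P \in \IR[X]$, the conjugate $\bar\alpha$ is also a root of $P$, so $1/\bar\alpha$ is a root of $P^*$, and $\alpha \neq 1/\bar\alpha$ since $|\alpha|\neq 1$. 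Passing to the squarefree part yields a polynomial of degree at most $2D$ and Mahler measure at most $\Ma{P}^2$ (using $\Ma{P^*}=\Ma{P}$) with $\alpha$ and $1/\bar\alpha$ as distinct roots. Since the bound in Mahler's theorem is decreasing in the degree and in the Mahler measure, I obtain
\[
|\alpha - 1/\bar\alpha| > \sqrt{3}\,(2D)^{-(D+1)}\,\Ma{P}^{-2(2D-1)}.
\]

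The final step is the conversion to $\bigl||\alpha|-1\bigr|$ through the identity $|\alpha-1/\bar\alpha| = \bigl||\alpha|^2-1\bigr|/|\alpha| = \bigl||\alpha|-1\bigr|(|\alpha|+1)/|\alpha|$, which gives $\bigl||\alpha|-1\bigr| = \tfrac{|\alpha|}{|\alpha|+1}\,|\alpha-1/\bar\alpha|$. In the range $1/2<|\alpha|<2$, the ratio $|\alpha|/(|\alpha|+1)$ is at least $1/3$. Taking logarithms, using $\log(\sqrt{3}/3) = -\tfrac{1}{2}\log 3 > -1$ and $\log\Ma{P} = D\,h(\alpha)$, the stated bound drops out. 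The main obstacle is precisely this conversion step: a naive approach picks up a factor of $|\alpha|$ and seems to force a lower bound like $|\alpha|\geq \Ma{P}^{-1}$, which would worsen the coefficient of $h(\alpha)$ by an extra $D h(\alpha)$. The case split above is the key device that avoids this loss, because the ranges where $|\alpha|$ can be small or large are handled trivially, and in the remaining bounded range $|\alpha|/(|\alpha|+1)$ is automatically controlled by an absolute constant.
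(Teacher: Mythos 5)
Your proof is correct and follows essentially the same route as the paper: apply Mahler's root-separation theorem to $P(X)\,X^{D}P(1/X)$, use $\alpha$ and $1/\bar\alpha$ as the distinct roots, and convert $|\alpha-1/\bar\alpha|=\bigl||\alpha|^2-1\bigr|/|\alpha|$ to $\bigl||\alpha|-1\bigr|$ using $\tfrac{|\alpha|}{|\alpha|+1}\ge\tfrac13$. The only cosmetic difference is that your case split is slightly more elaborate than needed: the paper discards only $|\alpha|<1/2$ by the trivial bound, since $\tfrac{|\alpha|}{|\alpha|+1}\ge\tfrac13$ already holds on all of $[1/2,\infty)$, so your extra branch $|\alpha|\ge 2$ is harmless but unnecessary.
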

\begin{proof}
The first inequality  follows
from $|\alpha^p-1|\ge ||\alpha|^p-1| = ||\alpha|-1|
\cdot ||\alpha|^{p-1}+\cdots + 1|\ge ||\alpha|-1|$. 
To prove the second inequality 
we may assume $|\alpha|\ge 1/2$, in particular $\alpha\not=0$.  
  Let $P\in\IZ[X]$ denote the minimal polynomial of $\alpha$. We will
 apply
  Mahler's Theorem to  $F=P(X) P(1/X) X^D \in\IZ[X]$. 
Observe that $F(\alpha)=F(\overline\alpha^{-1})=0$ and $\deg F = 2D$.
Therefore, $|\alpha-\overline \alpha^{-1}| > \sqrt 3
(2D)^{-(2D+2)/2} \Ma{F}^{-(2D-1)}$ since $|\alpha|\not=1$. As $\Ma{P(1/X)X^D}=\Ma{P}$ and
since the Mahler measure is multiplicative, we find, after multiplying
with $|\alpha|=|\overline\alpha|$, that   
\begin{equation*}
\left ||\alpha|^{2}-1\right| > \sqrt 3 |\alpha| (2D)^{-(D+1)}
  \Ma{P}^{-2(2D-1)}. 
\end{equation*}
Observe that $\log \Ma{P} = D h(\alpha)$ and $||\alpha|-1| = ||\alpha|^2-1| / ( |\alpha|+1)$. 
Therefore,
\begin{equation*}
  \left||\alpha|-1\right| >\sqrt 3 \frac{|\alpha|}{|\alpha|+1} 
(2D)^{-(D+1)} \Ma{P}^{-2(2D-1)}
 \ge \frac {\sqrt 3}{3}
 (2D)^{-(D+1)}
  e^{-2(2D-1)Dh(\alpha)}
\end{equation*}
using $|\alpha|\ge 1/2$. We conclude the proof by taking the
logarithm. 
\end{proof}

\section{A First Estimate}

Let $n\ge 1$ and $p\ge 2$ be  integers.
For $a=(a_1,\ldots,a_n)\in\IZ^n$ we define
\begin{equation}
\label{eq:defDelta}
  \Delta_p(a) = \prod_{t=1}^{p-1}
|1+\zeta^{t a_1}+\cdots +\zeta^{t a_n}|
\quad\text{where}\quad \zeta = e^{2\pi \sqrt{-1}/p}.
\end{equation}
If $p$ is  a prime, then $\Delta_p(a)$ is the 
$\IQ(\zeta)/\IQ$ norm of the cyclotomic integer
$1+\zeta^{a_1}+\cdots + \zeta^{a_n}$ up-to sign. 
We 
attach to $a$  the lacunary Laurent polynomial
\begin{equation}
\label{def:Pa}
  P_a = 1+X^{a_1} + \cdots + X^{a_n}\in \IZ[X^{\pm 1}].
\end{equation}
Say $e = \max\{0,-a_1,\ldots,-a_n\}\ge 0$, then
\begin{equation}
\label{eq:Ppoly}
  X^{e} P_a  = X^e +X^{e+a_1}+\cdots + X^{e+a_n}
\end{equation}
is a polynomial with integral coefficients,  non-zero constant term,
and  degree $d=\max\{a_i -a_j : 0\le i,j\le n\}\le 2|a|$, where $a_0=0$. 
We write $\alpha_1,\ldots,\alpha_d\in\IC$ for the zeros of $X^e P_a$ 
with multiplicity, i.e. $X^eP_a = p_0(X-\alpha_1)\cdots (X-\alpha_d)$
where $p_0\ge 1$ is the leading term of $P_a$. We note that $\alpha_i
\not = 0,1$ for all $i$. 

 Our goal in this section is to bound
\begin{equation}
\label{eq:goal}
\left|  \frac{1}{p-1} \log \Delta_p(a) - \ma{P_a} \right|
\end{equation}
from above, here
$\ma{P_a}=\ma{X^e P_a} = \log p_0+\sum_{i=1}^d \log
\max\{1,|\alpha_i|\}$ is the logarithmic Mahler measure of $P_a$. 

As $\Delta_p(a)$ is essentially a resultant we can rewrite it
as a product over the roots $\alpha_i$.
This will allow us to 
express the difference (\ref{eq:goal}) in terms of these roots. 

In the next 4 lemmas  we obtain several statements on the roots
$\alpha_i$ in terms of $a\in\IZ^n$.

\begin{lemma}
\label{lem:altDelta}
We have $\Delta_p(a) = p_0^{p} (n+1)^{-1} \prod_{i=1}^d |\alpha_i^p-1|$. 
\end{lemma}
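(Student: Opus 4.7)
The plan is to view $\Delta_p(a)$ as the absolute value of a resultant of $X^e P_a$ with $X^p-1$, dropping a single factor.

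First, I would observe that $P_a(\zeta^t)=1+\zeta^{ta_1}+\cdots+\zeta^{ta_n}$, so
\begin{equation*}
\Delta_p(a)=\prod_{t=1}^{p-1}|P_a(\zeta^t)|.
\end{equation*}
Using $X^e P_a = p_0\prod_{i=1}^d(X-\alpha_i)$ from (\ref{eq:Ppoly}) and the fact that $|\zeta^{te}|=1$, I would substitute $X=\zeta^t$ and take moduli to get
\begin{equation*}
|P_a(\zeta^t)|=p_0\prod_{i=1}^d|\zeta^t-\alpha_i|.
\end{equation*}
Multiplying over $t=1,\dots,p-1$ and swapping the order of the product yields
\begin{equation*}
\Delta_p(a)=p_0^{p-1}\prod_{i=1}^d\prod_{t=1}^{p-1}|\zeta^t-\alpha_i|.
\end{equation*}

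Next I would use the identity $\prod_{t=0}^{p-1}(Y-\zeta^t)=Y^p-1$, valid for any $Y\in\IC$, applied at $Y=\alpha_i$. Separating off the $t=0$ term gives
\begin{equation*}
\prod_{t=1}^{p-1}|\zeta^t-\alpha_i|=\frac{|\alpha_i^p-1|}{|\alpha_i-1|},
\end{equation*}
which is legitimate since $\alpha_i\neq 1$ (as noted after (\ref{eq:Ppoly})). Substituting this back yields
\begin{equation*}
\Delta_p(a)=p_0^{p-1}\cdot\frac{\prod_{i=1}^d|\alpha_i^p-1|}{\prod_{i=1}^d|\alpha_i-1|}.
\end{equation*}

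To finish, I must identify the denominator. Evaluating the factorization $X^e P_a=p_0\prod_{i=1}^d(X-\alpha_i)$ at $X=1$, and noting $P_a(1)=n+1$, produces $n+1=p_0\prod_{i=1}^d(1-\alpha_i)$, so $\prod_{i=1}^d|\alpha_i-1|=(n+1)/p_0$. Inserting this gives the claimed formula
\begin{equation*}
\Delta_p(a)=p_0^p(n+1)^{-1}\prod_{i=1}^d|\alpha_i^p-1|.
\end{equation*}
There is no substantive obstacle here; the only mild care required is bookkeeping the factor $X^e$ (harmless since $|\zeta^{te}|=1$) and justifying division by $|\alpha_i-1|$, which is ensured by $\alpha_i\neq 1$.
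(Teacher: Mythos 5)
Your proposal is correct and matches the paper's proof essentially step for step: both express $\Delta_p(a)$ as $p_0^{p-1}\prod_i\prod_{t=1}^{p-1}|\zeta^t-\alpha_i|$, use $\prod_{t=1}^{p-1}(Y-\zeta^t)=(Y^p-1)/(Y-1)$ at $Y=\alpha_i$, and then identify the denominator via $p_0\prod_i(1-\alpha_i)=P_a(1)=n+1$. You simply supply a bit more bookkeeping (the harmless $X^e$ factor, the justification that $\alpha_i\neq 1$) than the paper's terse one-line chain.
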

\begin{proof}
A variant of this calculation can also be found in the proof of
Duke's Theorem 3 \cite{Duke:Combinatorial}. 
We have
\begin{alignat*}1
  \Delta_p(a) &= \prod_{t=1}^{p-1} |P_a(\zeta^t)| 
=p_0^{p-1}\prod_{i=1}^d \left(\prod_{t=1}^{p-1} |\zeta^t-\alpha_i|\right)
= p_0^{p-1} \prod_{i=1}^d \left|\frac{1-\alpha_i^p}{1-\alpha_i}\right|.
\end{alignat*}
The lemma follows since $p_0\prod_{i=1}^d(1-\alpha_i) = P_a(1) = n+1$. 
\end{proof}

Each $\alpha_i$ is an algebraic number with $D_i = [\IQ(\alpha_i):\IQ]\le
d$ whose height is bounded by the next lemma. 

\begin{lemma}
\label{lem:heightbound}
 Let $i \in \{1,\ldots,d\}$, then $D_i
h(\alpha_i)\le \ma{P_a}\le 
  \log(n+1)$. 
\end{lemma}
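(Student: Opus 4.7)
The plan splits into two independent inequalities that both follow from standard properties of the (logarithmic) Mahler measure.

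For the upper bound $\ma{P_a}\le \log(n+1)$, I would simply note that for every $x\in[0,1]$ the triangle inequality gives
\begin{equation*}
\bigl|P_a(e^{2\pi\sqrt{-1}x})\bigr| \le 1+|e^{2\pi\sqrt{-1}a_1 x}|+\cdots+|e^{2\pi\sqrt{-1}a_n x}| = n+1,
\end{equation*}
so $\log|P_a|\le\log(n+1)$ pointwise on the unit circle, and integrating yields $\ma{P_a}\le\log(n+1)$.

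For the inequality $D_i h(\alpha_i)\le \ma{P_a}$, I would first observe that the definition of $\ma{\cdot}$ for Laurent polynomials is unchanged under multiplication by monomials, so $\ma{P_a}=\ma{X^e P_a}$ (equivalently, $\ma{X}=0$). Next, let $Q_i\in\IZ[X]$ denote the primitive minimal polynomial of $\alpha_i$ with positive leading coefficient; then $D_i h(\alpha_i)=\ma{Q_i}$ by the definition recalled in Section~\ref{sec:notation}. Since $\alpha_i\ne 0$, $Q_i\ne X$, and because $X^e P_a\in\IZ[X]$ vanishes at $\alpha_i$, Gauss's lemma shows $Q_i$ divides $X^eP_a$ in $\IZ[X]$. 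The Mahler measure is multiplicative on products of polynomials in $\IZ[X]$ (the factor polynomial has Mahler measure at most that of the whole), so
\begin{equation*}
D_i h(\alpha_i)=\ma{Q_i}\le \ma{X^e P_a}=\ma{P_a}.
\end{equation*}

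There is no real obstacle here; the only mild subtlety is remembering to pass from the Laurent polynomial $P_a$ to the honest polynomial $X^e P_a$ before invoking divisibility in $\IZ[X]$, and to record that $\alpha_i\ne 0$ so that $Q_i\ne X$ is a genuine non-monomial factor.
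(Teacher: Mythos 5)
Your proof is correct. For the first inequality $D_i h(\alpha_i)\le\ma{P_a}$ you follow exactly the paper's route: pass to $X^eP_a\in\IZ[X]$, invoke Gauss's lemma to get divisibility by the minimal polynomial $Q_i$, and then use multiplicativity of $\ma{\cdot}$ together with non-negativity of $\ma{\cdot}$ on $\IZ[X]\ssm\{0\}$. For the upper bound $\ma{P_a}\le\log(n+1)$, however, you take a genuinely different and more elementary path: you bound $|P_a|$ pointwise on the unit circle by $n+1$ via the triangle inequality and integrate, whereas the paper cites Corollary 6 of Chapter 3.4 in Schinzel, which bounds $\Ma{P_a}$ by the Euclidean norm of the coefficient vector. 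The cited result is sharper when the exponents $0,a_1,\ldots,a_n$ are pairwise distinct (it yields $\frac12\log(n+1)$), but the paper only records the weaker $\log(n+1)$, which is precisely what your triangle-inequality argument gives directly and self-containedly — and, unlike the naive reading of the Schinzel bound, your argument is manifestly uniform in whether exponents coincide (in which case coefficients of $X^eP_a$ can exceed $1$ and the Euclidean norm of the coefficient vector can grow up to $n+1$ anyway). So your version is arguably cleaner to justify from first principles, at the price of not exposing the potential factor-of-two improvement in the exponent.
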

\begin{proof}
Recall that $\alpha_i$ is a root of the polynomial in
(\ref{eq:Ppoly}) which,  
by the Gauss Lemma, is divisible by the minimal polynomial $Q$  in $\IZ[X]$ of $\alpha_i$.
 So $D_i h(\alpha_i) = \ma{Q}\le \ma{X^eP_a}=\ma{P_a}$ as the logarithmic Mahler measure is
additive and non-negative on $\IZ[X]\ssm\{0\}$. 
By 
Corollary 6, Chapter 3.4 \cite{Schinzel}
the Mahler measure $\Ma{P_a}$ is at most the euclidean norm of the 
coefficient vector of $P_a$. This gives $\ma{P_a}\le \log(n+1)$.
\end{proof}

We now come to a lower bound for $|\alpha_i^p-1|$ which is independent
of $p$ under the assumption that $\alpha_i$ lies off the unit
circle or is a root of unit of order not  divisible by $p$. 

\begin{lemma}
\label{lem:loglb1}
Suppose $a\not=0$, let  $i\in
\{1,\ldots,d\}$, and let $p\ge 1$ be an integer. 
\begin{enumerate}
\item [(i)]
If $|\alpha_i|\not=1$, then 
$\log |\alpha_i^p-1|\ge - 18\log(n+1) |a|\log(2|a|)$. 
\item[(ii)] If $\alpha_i$  is a root of unity and $\alpha_i^p\not=1$, then 
$\log |\alpha_i^p-1|\ge -2\log(2|a|)$. 
\end{enumerate}
\end{lemma}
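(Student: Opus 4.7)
My plan is to treat the two parts separately, since they rely on different ingredients. For part (i), the natural move is to feed $\alpha_i$ into Lemma \ref{lem:alphapminus1}. The inputs needed there are upper bounds on the degree $D_i$ and on the product $D_i h(\alpha_i)$; both are already available. From the construction of $X^e P_a$ recalled just before the lemma, I have $D_i \leq d \leq 2|a|$, and Lemma \ref{lem:heightbound} supplies $D_i h(\alpha_i) \leq \log(n+1)$. Substituting these into the bound provided by Lemma \ref{lem:alphapminus1} produces an estimate of the rough shape $-1 - (2|a|+1)\log(4|a|) - 4D_i\log(n+1)$, and each of these three contributions can be absorbed into a multiple of $\log(n+1)|a|\log(2|a|)$ using $|a| \geq 1$, $n \geq 1$, together with $\log(n+1) \geq \log 2$ and $\log(2|a|) \geq \log 2$. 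The only real work is bookkeeping the constants so that they sum to at most $18$.

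For part (ii), I would argue by elementary geometry on the unit circle. Write $s$ for the order of $\alpha_i$. Then $\alpha_i$ is a primitive $s$-th root of unity, its minimal polynomial is the cyclotomic polynomial $\Phi_s$, and hence $\phi(s) = D_i \leq d \leq 2|a|$. The classical inequality $\phi(s) \geq \sqrt{s/2}$, valid for all $s \geq 1$ and easily checked by factoring $\phi(s)^2/s = \prod_{p \mid s} p^{e_p - 2}(p-1)^2$ and verifying that every factor is at least $1/2$, then gives $s \leq 2\phi(s)^2 \leq 8|a|^2$. Since $\alpha_i^p$ is a non-trivial $s$-th root of unity by hypothesis, its distance to $1$ is at least the length of the shortest chord between distinct $s$-th roots of unity, namely $2\sin(\pi/s) \geq 4/s$. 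Combined with the bound on $s$, this yields $|\alpha_i^p-1| \geq 1/(2|a|^2)$, and taking logarithms together with $-\log 2 \geq -2\log 2$ gives the claimed inequality.

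I do not anticipate any serious obstacle in either part: (i) is routine once the correct ingredients are assembled, and (ii) only uses cyclotomic geometry together with a standard Euler totient bound. The heaviest lifting has already been done by Mahler's separation theorem, which was exploited in the proof of Lemma \ref{lem:alphapminus1}.
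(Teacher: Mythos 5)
Your overall route is the same as the paper's: feed $D_i\le 2|a|$ and $D_i h(\alpha_i)\le\log(n+1)$ into Lemma~\ref{lem:alphapminus1} for part~(i), and bound the chord length of a root of unity against a totient estimate for part~(ii). Part~(ii) is correct; you parameterize by the order $s$ of $\alpha_i$ rather than the order $m$ of $\alpha_i^p$ as the paper does, but since $m\mid s$ and $\varphi(s)=D_i\le 2|a|$, the two variants are interchangeable and your chord bound $2\sin(\pi/s)\ge 4/s$ (valid for $s\ge 2$, which holds because $\alpha_i^p\ne 1$ forces $\alpha_i\ne 1$) lands at $-\log 2-2\log|a|\ge -2\log(2|a|)$ just as needed.

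In part~(i), however, the bookkeeping as you have set it up does not close. You propose to absorb each contribution into a multiple of $\log(n+1)\,|a|\log(2|a|)$ using only $|a|\ge 1$, $n\ge 1$, $\log(n+1)\ge\log 2$, and $\log(2|a|)\ge\log 2$, but with these ingredients the constant $18$ is false: at $n=1$, $|a|=1$ the quantity $(2|a|+1)\log(4|a|)+8|a|\log(n+1)=14\log 2\approx 9.70$ already exceeds $18\log(n+1)\,|a|\log(2|a|)=18(\log 2)^2\approx 8.65$, and adding the leftover $1$ only makes it worse. The paper avoids this by first observing that the hypothesis $|\alpha_i|\ne 1$ forces $n\ge 2$ (for $n=1$ the polynomial $1+X^{a_1}$, after clearing denominators, has all its roots on the unit circle, so the case is vacuous), which upgrades $\log(n+1)\ge\log 2$ to $\log(n+1)\ge\log 3$. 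It also absorbs the stray $-1$ into the slack $+2\log(n+1)\ge 2\log 2>1$ that comes from replacing $-2(2D_i-1)\log(n+1)$ by $-4D_i\log(n+1)$, rather than treating $-1$ as a separate term. With both of these observations the extremal case $|a|=1$, $n=2$ gives about $17<18$ and the estimate goes through; without them it does not. So the missing ingredient is precisely the remark that $|\alpha_i|\ne 1$ implies $n\ge 2$.
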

\begin{proof}
Observe that $|\alpha_i|\not=1$ implies $n\ge 2$. 
According to Lemmas \ref{lem:alphapminus1} and
\ref{lem:heightbound} we have 
\begin{alignat*}1
\log|\alpha_i^p-1|&\ge -1 -(D_i+1)\log(2D_i) -2(2D_i-1)\log(n+1) \\
&\ge -(D_i+1)\log(2D_i) -4D_i \log(n+1).
\end{alignat*}
 Now $D_i\le
2|a|$ by (\ref{eq:Ppoly}). The first part of the lemma follows from 
\begin{equation*}
 (2|a|+1)\log(4|a|) +8|a|\log(n+1) \le 18 \log(n+1) |a|\log (2|a|). 
\end{equation*}

The second part is more elementary. Let $m\ge 2$ be the multiplicative
order of $\alpha_i^p$.
If $m\ge 3$, then  $|\alpha_i^p-1|\ge \sin(2\pi/m)\ge 2/m$.
The bound $|\alpha_i^p-1|\ge 2/m$ certainly also holds
 for $m=2$.
It is well-known that Euler's totient function $\varphi$ satisfies 
 $\varphi(m)\ge \sqrt{m/2}$. As
 $\varphi(m)=[\IQ(\alpha_i^p):\IQ]\le D_i$ we find 
$m\le 2D_i^2$. Hence $\log|\alpha_i^p-1|\ge -2\log D_i\ge
-2\log(2|a|)$, as $D_i\le 2|a|$. 
\end{proof}

This last lemma allows us to compare $\log|\alpha_i^p-1|$ with the
corresponding contribution $p\log\max\{1,|\alpha_i|\}$ in the
logarithmic Mahler
measure. 
\begin{lemma}
\label{lem:localcomparison}
Suppose $a\not=0$, let $i\in
\{1,\ldots,d\}$, and let $p\ge 1$ be an integer. 
\begin{enumerate}
\item [(i)]
  If $|\alpha_i|\not=1$,  then 
  \begin{equation}
\label{eq:localcomparison1}
    \bigl|\log|\alpha_i^p-1| - p \log \max\{1,|\alpha_i|\} \bigr|\le 
18 \log(n+1) |a|\log(2|a|).
  \end{equation}
\item[(ii)] If $\alpha_i$ is a root of unity and
  $\alpha_i^p\not=1$, then 
  \begin{equation*}
    \bigl|\log|\alpha_i^p-1|\bigr|\le 2\log(2|a|). 
  \end{equation*}
\end{enumerate}
\end{lemma}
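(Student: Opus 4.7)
The plan is to treat the two parts separately, with (ii) essentially immediate and (i) reducing to Lemma \ref{lem:loglb1}(i) via the inversion symmetry $\alpha_i \leftrightarrow \alpha_i^{-1}$.

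For (ii), since $\alpha_i$ lies on the unit circle we have the trivial upper bound $|\alpha_i^p - 1| \le |\alpha_i|^p + 1 = 2$, so $\log|\alpha_i^p - 1| \le \log 2 \le \log(2|a|)$. The matching lower bound $\log|\alpha_i^p - 1| \ge -2\log(2|a|)$ is exactly Lemma \ref{lem:loglb1}(ii). These combine to give $|\log|\alpha_i^p-1|| \le 2\log(2|a|)$.

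For (i), I would split on whether $|\alpha_i| < 1$ or $|\alpha_i| > 1$. When $|\alpha_i| < 1$, one has $\max\{1,|\alpha_i|\} = 1$, and the upper bound $|\alpha_i^p - 1| \le 1 + |\alpha_i|^p \le 2$ together with Lemma \ref{lem:loglb1}(i) immediately gives (\ref{eq:localcomparison1}). When $|\alpha_i| > 1$, the key identity is
\begin{equation*}
\log|\alpha_i^p - 1| - p\log|\alpha_i| \;=\; \log|1 - \alpha_i^{-p}|,
\end{equation*}
obtained by factoring $|\alpha_i|^p$ out of $|\alpha_i^p-1|$. The upper bound $\log|1 - \alpha_i^{-p}| \le \log 2$ is clear. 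For the lower bound, I would apply Lemma \ref{lem:loglb1}(i) to $\alpha_i^{-1}$: this is legitimate because $\alpha_i^{-1}$ is a root of the Laurent polynomial $P_{-a}$, which has $|-a| = |a|$, so $\alpha_i^{-1}$ has degree at most $2|a|$ and height equal to $h(\alpha_i)$ (with $D_i h(\alpha_i^{-1}) \le \log(n+1)$ by Lemma \ref{lem:heightbound} applied to $P_{-a}$). The assumption $|\alpha_i^{-1}| \ne 1$ holds, so Lemma \ref{lem:loglb1}(i) yields $\log|\alpha_i^{-p} - 1| \ge -18\log(n+1)|a|\log(2|a|)$, which is the required bound.

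The main conceptual step is recognizing the inversion symmetry and checking that every hypothesis of Lemma \ref{lem:loglb1}(i) is preserved when passing from $\alpha_i$ to $\alpha_i^{-1}$; once that is done the argument is routine, with the small constants $\log 2$ easily absorbed into the dominant term $18\log(n+1)|a|\log(2|a|)$ for any $|a|\ge 1$. There is no serious obstacle here, since the heavy lifting — the Mahler separation bound and its consequence for $||\alpha|-1|$ — has already been carried out in Lemmas \ref{lem:alphapminus1} and \ref{lem:loglb1}.
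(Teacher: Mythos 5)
Your proof is correct and follows essentially the same route as the paper: reduce the case $|\alpha_i|>1$ to the case $|\alpha_i|<1$ via the observation that $\alpha_i^{-1}$ is a root of $P_{-a}$ with $|-a|=|a|$, and pair the lower bound from Lemma \ref{lem:loglb1} with the trivial upper bound $\log 2$, which is absorbed into the dominant term since $|a|\ge 1$. The only difference is that you spell out the identity $\log|\alpha_i^p-1|-p\log|\alpha_i|=\log|1-\alpha_i^{-p}|$ explicitly, which the paper leaves implicit.
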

\begin{proof}
For the proof of part (i) let us first assume $|\alpha_i|<1$. Then $|\alpha_i^p-1|\le 2$ and
Lemma \ref{lem:loglb1}(i) yields
$\left|\log |\alpha_i^p-1|\right|\le 18 \log(n+1) |a|\log(2|a|)$, as
desired. 

If $|\alpha_i|>1$ we use that $\alpha_i^{-1}$ is a root of $P_{-a}$. 
We obtain the same bound as before for
  $\bigl|\log |\alpha_i^{-p}-1|\bigr| = 
  \bigl| \log |\alpha_i^{p}-1| - p\log|\alpha_i|\bigr|$ and this completes the
  proof of (i). 

To prove (ii) we argue as in the case $|\alpha_i|<1$ above
but use Lemma \ref{lem:loglb1}(ii).
\end{proof}

Suppose for the  moment that  all $\alpha_i$ satisfy
$|\alpha_i|\not=1$ and  for sake of simplicity also $p_0=1$. By  Lemma \ref{lem:altDelta} 
the bound given in part (i) of the last lemma
leads to the bound
\begin{equation}
\label{eq:zerothbound}
 \frac{d|a|\log(2|a|)}{p}\le  \frac{2|a|^2\log(2|a|)}{p}
\end{equation}
 for (\ref{eq:goal})  up-to
 a factor depending only on $n$. However, this estimate is not strong 
enough for our aims due to the contribution $|a|^2/p$.

To remedy this we begin by splitting up the roots $\alpha_i$ into
two parts depending on a parameter $\lambda \ge 1$. The first part 
 \begin{equation}
\label{def:scrB}
  \scrB = \scrB(p,a,\lambda) = \left\{ i : |\alpha_i|<1\text{ and }
|\alpha_i^{p}-1|< p^{-\lambda}\right\}
\cup \left\{ i : |\alpha_i|>1\text{ and
  }|\alpha_i^{-p}-1|< p^{-\lambda}\right\}
    \end{equation}
corresponds to those roots  whose $p$-th power is 
excessively 
 close to $1$.
The second part is the complement
\begin{equation*}
  \{1,\ldots,d\} \ssm \scrB. 
\end{equation*}
Later we will bound the cardinality of $\scrB$. 

\begin{proposition}
\label{prop:firstbound}
Let $\lambda\ge 1$,  let $a\in\IZ^n\ssm\{0\}$, and let $p\ge 1$ be an
integer  satisfying $|a|\le  p$. 
Suppose  that the only roots of $P_a$ that lie on the unit circle are
roots of unity of order not dividing $p$. Then $\Delta_p(a)\not=0$ and
  \begin{equation}
\label{eq:firstbound}
\frac{1}{p-1} \log \Delta_p(a) = \ma{P_a}  + O\left( 
  \frac{|a|\log(2p)}{p}(\lambda + \#\scrB)\right)
  \end{equation}
where the implied constant depends only on $n$.  
\end{proposition}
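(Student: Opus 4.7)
The starting point is Lemma \ref{lem:altDelta}, which gives
\begin{equation*}
\log \Delta_p(a) = p \log p_0 - \log(n+1) + \sum_{i=1}^d \log|\alpha_i^p-1|.
\end{equation*}
To see that $\Delta_p(a)\not=0$ I would observe that a factor $|\alpha_i^p-1|$ vanishes precisely when $\alpha_i$ lies on the unit circle and is a root of unity of order dividing $p$; the hypothesis forbids this. Combining the display with $\ma{P_a} = \log p_0 + \sum_i \log\max\{1,|\alpha_i|\}$, the task reduces to bounding
\begin{equation*}
\log p_0 - \log(n+1) + \sum_{i=1}^d \bigl( \log|\alpha_i^p-1| - (p-1)\log\max\{1,|\alpha_i|\} \bigr),
\end{equation*}
and the first two terms contribute $O(1)$ by Lemma \ref{lem:heightbound}.

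Next I would partition the roots $\alpha_i$ into three groups and estimate the corresponding summand in each case. Group (I) contains the roots off the unit circle with $i \not\in \scrB$. By the very definition of $\scrB$, either $|\alpha_i^p-1| \ge p^{-\lambda}$ (when $|\alpha_i|<1$) or $|\alpha_i^{-p}-1| \ge p^{-\lambda}$ (when $|\alpha_i|>1$). Using the identity $\log|\alpha_i^p-1| - p\log|\alpha_i| = \log|\alpha_i^{-p}-1|$ in the latter case, the summand equals $\log|\alpha_i^p-1|$ or $\log|\alpha_i^{-p}-1|$, each bounded above by $\log 2$ and below by $-\lambda\log p$; this gives an $O(\lambda\log p)$ bound per root. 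Group (II) contains the indices $i\in\scrB$, where Lemma \ref{lem:localcomparison}(i) directly produces $O(|a|\log(2|a|))$ per root (the hidden constant depending only on $n$). Group (III) contains the roots on the unit circle, which by hypothesis are roots of unity of order not dividing $p$, so Lemma \ref{lem:localcomparison}(ii) yields an $O(\log(2|a|))$ bound.

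Summing with $d \le 2|a|$ for Groups (I) and (III), using $|a|\le p$ so that $\log(2|a|)\le\log(2p)$, and bounding the size of Group (II) by $\#\scrB$, gives a total contribution of $O\bigl(|a|\log(2p)(\lambda+\#\scrB)\bigr)$. The passage from $p$ to $p-1$ introduces an extra copy of $\sum_i \log\max\{1,|\alpha_i|\} \le \ma{P_a} \le \log(n+1)$, again absorbed in the $O(1)$ term. Dividing by $p-1$ delivers the desired estimate (the leftover $O(1/p)$ is subsumed because $|a|\log(2p)(\lambda+\#\scrB)/p \ge 1/p$). I expect no serious obstacle: all the hard work is already done in the preceding lemmas, and what remains is a careful three-way bookkeeping together with the symmetry trick $\log|\alpha^p-1| = p\log|\alpha|+\log|\alpha^{-p}-1|$ used to handle $|\alpha_i|>1$.
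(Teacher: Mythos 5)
Your proof is correct and follows essentially the same route as the paper: Lemma \ref{lem:altDelta} to express $\log\Delta_p(a)$ as a sum over roots, the split into $\scrB$, its complement off the unit circle, and the roots on the unit circle, with Lemma \ref{lem:localcomparison} supplying the bounds in the last two cases and the definition of $\scrB$ in the first, followed by $d\le 2|a|$, $|a|\le p$, and $\ma{P_a}\le\log(n+1)$ to assemble the error term. The only cosmetic difference is that you carry the $(p-1)$-normalization through the summand from the start and correct by $\sum_i\log\max\{1,|\alpha_i|\}\le\log(n+1)$ at the end, while the paper bounds the $p$-normalized summand throughout and converts at the final step; both come to the same thing.
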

\begin{proof}
We recall Lemma \ref{lem:altDelta}, it implies 
$\Delta_p(a)\not=0$ under the given circumstances. 
Let $\alpha_1,\ldots,\alpha_d$ be the roots of $P_a$ with
multiplicities, as
  above.

Say first $i\in \{1,\ldots,d\}\ssm \scrB$. 
  If $|\alpha_i|<1$, then $|\alpha_i^p-1|\ge p^{-\lambda}$. Thus
$\bigl|\log |\alpha_i^p-1|\bigr| \le \lambda \log(2p)$  since $\lambda\log(2p)\ge \log
  2$. 
For $|\alpha_i|>1$ we proceed similarly and obtain 
$\bigl|\log |\alpha_i^p-1|-p\log|\alpha_i|\bigr|\le \lambda\log(2p)$. 
If $|\alpha_i|=1$, then by hypothesis $\alpha_i$ is a root of unity
whose order does not divide $p$, so
 Lemma \ref{lem:localcomparison}(ii) implies
 $\left|\log |\alpha_i^p-1|\right| \le 2\log(2|a|)\le 2\log(2p)$.

We recall $d\le 2|a|$ and sum  over $\{1,\ldots,d\}\ssm \scrB$ to find
\begin{alignat}1
\label{eq:scrBcompl}
\sum_{\substack{i=1 \\ i\not\in \scrB}}^d
  \bigl|\log|\alpha_i^p-1| - p \log \max\{1,|\alpha_i|\} \bigr|
&\le (d-\#\scrB)\max\{\lambda \log(2p),2\log(2p)\}  \\ &\le 4|a|
  \log (2p)\lambda. \nonumber 
\end{alignat}

If $i\in \scrB$ then $|\alpha_i|\not=1$. We apply 
 Lemma
\ref{lem:localcomparison}(i) and use $|a|\le p$. The bound (\ref{eq:localcomparison1}) holds for
$\#\scrB$ roots and therefore
\begin{alignat*}1
\sum_{i\in \scrB}
  \bigl|\log|\alpha_i^p-1| - p \log \max\{1,|\alpha_i|\} \bigr|
&\le 18 \log(n+1) |a|\log(2p)\#\scrB. 
\end{alignat*}
We combine this bound with (\ref{eq:scrBcompl}) to obtain
\begin{equation}
\label{eq:scrBandcompl}
  \sum_{i=1}^d 
  \bigl|\log|\alpha_i^p-1| - p \log \max\{1,|\alpha_i|\} \bigr|
\le 18\log(n+1)|a|\log(2p)(\lambda + \#\scrB). 
\end{equation}

The logarithmic Mahler measure of $P_a$ is $ \log p_0 + \sum_{i=1}^d
\log\max\{1,|\alpha_i|\}$ where $p_0\ge 1$ is the leading term of $P_a$. 
We use 
  Lemma \ref{lem:altDelta} and apply the triangle inequality
to find that $|\frac{1}{p-1} \log\Delta_p(a)-\ma{P_a}|$ is at most 
\begin{alignat*}1
\frac{1}{p-1} &\left|\log\left(\frac{p_0^{p}}{n+1}\right) -(p-1)\log p_0 \right| + \frac{1}{p-1} \sum_{i=1}^d 
\left|\log |\alpha_i^p-1| - (p-1)\log \max\{1,|\alpha_i|\}\right| \\
& \le \frac{\log(n+1)}{p-1}
+  18\log(n+1)
\frac{|a|\log(2p)}{p-1}(\lambda + \#\scrB) 
+\frac{1}{p-1}\ma{P_a}
\end{alignat*}
where we used  (\ref{eq:scrBandcompl}). 
From Lemma \ref{lem:heightbound} we deduce $\ma{P_a}\le \log(n+1)$. 
So
 (\ref{eq:firstbound}) 
holds true. 
\end{proof}

If we ignore for the moment all logarithmic contributions, then we
have traded in $d$ in (\ref{eq:zerothbound}) for $\lambda + \#\scrB$ in 
(\ref{eq:firstbound}).

Before continuing we  make  two elementary, but important,
observations on  symmetry properties of $\Delta_p(a)$ when $p$ is a
prime. 

\begin{lemma}
\label{lem:deltasymmetry}
Let $p$ be a prime  and let $a\in\IZ^n$ be arbitrary.
  \begin{enumerate}
  \item [(i)] If $a'\in\IZ^n$ with $a\equiv a'\imod p$, then 
$\Delta_p(a)=\Delta_p(a')$. 
\item[(ii)] If $t\in\IZ$ with $p\nmid t$, then $\Delta_p(a) =
  \Delta_p(ta)$. 
  \end{enumerate}
\end{lemma}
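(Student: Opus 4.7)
The plan is that both statements follow from elementary manipulations with the definition
\begin{equation*}
\Delta_p(a) = \prod_{t=1}^{p-1}|1+\zeta^{ta_1}+\cdots+\zeta^{ta_n}|,\qquad \zeta=e^{2\pi\sqrt{-1}/p},
\end{equation*}
exploiting that $\zeta$ has order exactly $p$ and that $\IF_p^\times$ is a group. I expect no serious obstacle; the proof is a direct calculation and does not use that $p$ is prime for part (i), while (ii) uses primality only through the invertibility of $t$ modulo $p$.

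For part (i), I would observe that if $a\equiv a' \imod{p}$ then $ta_i \equiv ta_i' \imod{p}$ for every integer $t$ and every $i\in\{1,\ldots,n\}$. Since $\zeta^p=1$, this yields $\zeta^{ta_i} = \zeta^{ta_i'}$, so each factor of the product defining $\Delta_p(a)$ coincides with the corresponding factor of $\Delta_p(a')$, and therefore $\Delta_p(a) = \Delta_p(a')$.

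For part (ii), the key point is that since $p$ is prime and $p\nmid t$, the map $s\mapsto st$ induces a bijection on $\IF_p^\times$, or equivalently, $\{st\bmod p : s\in\{1,\ldots,p-1\}\} = \{1,\ldots,p-1\}$ as sets modulo $p$. I would then rewrite
\begin{equation*}
\Delta_p(ta) = \prod_{s=1}^{p-1}|1+\zeta^{s(ta_1)}+\cdots+\zeta^{s(ta_n)}| = \prod_{s=1}^{p-1}|1+\zeta^{(st)a_1}+\cdots+\zeta^{(st)a_n}|,
\end{equation*}
and since $\zeta^{(st)a_i}$ depends only on $st \imod{p}$, reindexing via $u\equiv st\imod{p}$ turns this product into $\prod_{u=1}^{p-1}|1+\zeta^{ua_1}+\cdots+\zeta^{ua_n}| = \Delta_p(a)$. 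The two identities together complete the proof.
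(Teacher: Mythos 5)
Your proof is correct and follows essentially the same approach as the paper's: part (i) uses $\zeta^p=1$, and part (ii) uses that multiplication by $t$ permutes the nonzero residues modulo $p$, i.e.\ that $\zeta\mapsto\zeta^t$ permutes the factors of the product. You have merely written out the reindexing more explicitly than the paper does.
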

\begin{proof}
  Part (i) follows using the
 definition (\ref{eq:defDelta}) and  $\zeta^p = 1$. For part (ii) observe that
  $\zeta\mapsto \zeta^t$ permutes the factors in (\ref{eq:defDelta}) since $p\nmid
  t$. 
\end{proof}

Using this lemma we will transform $a'= ta-b$ with $p\nmid
t$ and $b\in p\IZ^n$ such that $|a'|$ is small compared to $p$. 
This will be done using 
Dirichlet's Theorem from diophantine approximation. 
A theorem of
Bombieri-Masser-Zannier  leads to
a criterion that rules out  that $P_{a'}$ has roots on the unit circle
of infinite order.
This opens the door to applying the previous proposition. 

In a final step we will need a strong upper bound for $\scrB$ for a
sufficiently large but fixed $\lambda$. This is where  counting
rational points close to a definable sets comes into play.

\section{Lacunary Polynomials with Roots off the Unit Circle}
\label{sec:lacunary}

Say $n\ge 2$. In this section we investigate
a condition on $a\in\IZ^n$ such that $P_a$, as defined in (\ref{def:Pa}), does not
 vanish at any point 
of $S^1\ssm \mu_\infty$. 
It will prove useful to restrict $a$ to a subgroup
$\Omega\subset\IZ^n$
 and we will
introduce a condition on $\Omega$ that ensures that $P_a$ does not
have any roots in $S^1\ssm\mu_\infty$ for  $a\in\Omega$ in
 general position.

Our condition is based on a theorem of Bombieri, Masser, and
Zannier \cite{BMZGeometric} on unlikely intersections in the algebraic
torus.

We also make use of  (very rudimentary)  tropical geometry. 
Say $K$ is the field of Puiseux series over $\IC$; it
is algebraically  closed and equipped with a surjective
valuation $\ordS : K \rightarrow\IQ \cup \{\infty\}$. 
Let $\cX$ be an irreducible subvariety defined over $K$ of the algebraic torus
$\IGm^n$.
The tropical variety $\trop{\cX}$ of $\cX$ is the  closure in
$\IR^n$ of 
\begin{equation*}
 \{(\ordS(x_1),\ldots,\ordS(x_n)) : (x_1,\ldots,x_n)\in \cX(K) \}. 
\end{equation*}

We need the following
two basic facts.

First, if $\cY\subset\cX$ is
an irreducible subvariety defined over $K$, then   $\trop{\cY}\subset
\trop{\cX}$; this follows directly from the definition.

Second, if $r=\dim \cX \ge 1$ and after permuting coordinates, the projection of 
 $\trop{\cX}$ to the first $r$ coordinates of $\IR^n$ contains
$\IQ^r$. 
We   prove this using basic algebraic geometry. 
After permuting coordinates the projection $\pi : \cX(K)\rightarrow
\IGm^r(K)$ onto the first $r$ coordinates contains a Zariski open and
dense subset of
$\IGm^r$. There exists a polynomial  $P\in
K[X_1^{\pm 1},\ldots,X_r^{\pm 1}]\ssm\{0\}$ 
such that any point of $\IGm^r(K)$ outside of the zero locus
 of $P$ lies in the  image of $\pi$. 
Let  $(a_1,\ldots,a_n)\in\IQ^r$ be arbitrary. For sufficiently general 
$(c_1,\ldots,c_r)\in\IGm^r(\IC)$ the polynomial $P$ does not vanish at
 $(c_1T^{a_1},\ldots,c_rT^{a_r}) \in \IGm^r(K)$.
This point has a pre-image under $\pi$  in $\cX(K)$
and the valuation of its first $r$ coordinates
are $a_1,\ldots,a_r$. This yields our claim.

Einsiedler, Kapranov and Lind's  \cite{EinKapLin}
 Theorem 2.2.5 on the structure of $\trop{\cX}$ 
can be used instead of this second property in the proof of 
Lemma \ref{lem:bmz} below.


We write $\langle \cdot,\cdot\rangle$ for the standard scalar product on $\IR^n$. 
If $\Omega$ is a subgroup of $\IZ^n$, then we set
\begin{equation*}
 \Omega^{\perp} = \{a\in \IZ^n : \langle a,\omega\rangle=0\text{ for
  all }\omega\in \Omega\}.
\end{equation*}

We write  $e_1,\ldots,e_n$ for  the standard basis elements of $\IR^n$
augmented by 
$e_0=0$.

\begin{lemma}
\label{lem:bmz}
Let $\Omega\subset\IZ^n$ be a subgroup of
rank $m\ge 2$
 for which the following property holds. 
If $(\alpha,\beta)\in\IZ^2\ssm\{0\}$ and if $i,j,k,l\in \{0,\ldots,n\}$
are pairwise distinct with $v = \alpha(e_i-e_j) + \beta ( e_k-e_l)$, then  $\Omega\not\subset (v\IZ)^{\perp}$.
Then there exist finitely many  subgroups
$\Omega_1,\ldots,\Omega_N\subset\Omega$ of rank at most $m-1$ such that if
$a\in\Omega\ssm \bigcup_{i=1}^N \Omega_i$ 
then $P_a$ does not vanish at any point of $S^1\ssm
\mu_\infty$.
\end{lemma}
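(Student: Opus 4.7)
I would identify a root $z_0 \in S^1 \ssm \mu_\infty$ of $P_a$ with a point $(z_0^{a_1}, \ldots, z_0^{a_n})$ lying on the hypersurface $\cX = \{1 + X_1 + \cdots + X_n = 0\} \subset \IGm^n$ and on the $1$-parameter subtorus $H_a$ generated by $a$, which sits inside the $m$-dimensional subtorus $H_\Omega \subset \IGm^n$ associated to $\Omega$. The strategy is to apply the Bombieri--Masser--Zannier theorem to $\cX$: its positive-dimensional anomalous subvarieties are precisely the cosets $gK$ of subtori $K \subset \IGm^n$ that are contained in $\cX$, and the open anomalous locus $\cX^{\mathrm{oa}}$ is Zariski dense in $\cX$.

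\textbf{Classification of cosets and the small-class cases.} A direct calculation of when $1 + X_1 + \cdots + X_n$ vanishes identically on a subtorus coset shows that each such $gK \subset \cX$ corresponds to a partition of $\{0, 1, \ldots, n\}$ into classes $C$ of size $\ge 2$ satisfying $\sum_{i \in C} g^{e_i} = 0$, with the defining lattice $L_K$ of $K$ containing all $e_i - e_j$ for $i, j$ in a common class. Suppose $(z_0^{a_1}, \ldots, z_0^{a_n}) \in gK$. If some class $\{i, j\}$ has size $2$, then $g^{e_i} + g^{e_j} = 0$ gives $z_0^{a_i - a_j} = -1$, and since $z_0 \notin \mu_\infty$ this forces $a_i = a_j$, so $a \in \Omega \cap (e_i - e_j)^\perp$; by the hypothesis on $\Omega$ with $(\alpha, \beta) = (1, 0)$ (choosing any two further indices $k, l$, possible because $n \ge 3$; the case $n = 2$ is trivial as $P_a$ then has no roots on $S^1 \ssm \mu_\infty$) this is a proper subgroup of $\Omega$ of rank at most $m-1$. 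Classes of size $3$ are disposed of identically via the rigidity of three unit complex numbers summing to zero, which forces equalities among the corresponding $a_i$'s.

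\textbf{Classes of size $\ge 4$ and the open locus.} For a class $\{i, j, k, l\}$ of size $\ge 4$ in the partition, $L_K$ must contain both $e_i - e_j$ and $e_k - e_l$, and the full hypothesis on $\Omega$ ensures these are $\IQ$-linearly independent modulo $\Omega^\perp \otimes \IQ$; hence $K$ has codimension at least $2$ inside $H_\Omega$. Using the two tropical facts recalled before the lemma --- the inclusion $\trop{\cY} \subset \trop{\cX}$ and the surjectivity of the projection of $\trop{H_\Omega}$ onto $\IQ^m$ after permutation of coordinates --- one controls the intersection $H_a \cap gK$ dimensionally and concludes that $H_a$ cannot meet $gK$ unless $a$ lies in a proper sublattice of $\Omega$, yielding one more finite family of excluded subgroups. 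The residual case $(z_0^{a_1}, \ldots, z_0^{a_n}) \in \cX^{\mathrm{oa}}$ is handled by a further BMZ transversality argument restricting such non-torsion $S^1$-intersections to another finite union of proper subgroups. The main obstacle is exactly this size-$\ge 4$ case, where continuous families of vanishing unit sums preclude any elementary rigidity argument; here BMZ supplies the essential structural input, and the specific two-difference form $v = \alpha(e_i - e_j) + \beta(e_k - e_l)$ of the hypothesis is perfectly matched to the two-relation structure that such cosets exhibit.
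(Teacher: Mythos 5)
Your proposal has a genuine gap at its foundation: you work only with the single hypersurface $\cX=\{1+X_1+\cdots+X_n=0\}$, of dimension $n-1$. The paper's key observation is that a root $z\in S^1$ gives the point $z^a$ lying on \emph{two} equations simultaneously, because applying complex conjugation to $1+z^{a_1}+\cdots+z^{a_n}=0$ (using $\bar z=z^{-1}$) yields $1+z^{-a_1}+\cdots+z^{-a_n}=0$. Thus $z^a$ lies on a variety of dimension $n-2$, not $n-1$. This codimension-two fact is what makes BMZ Theorem~1.7 applicable: that theorem concerns $\cX\cap H$ with $\dim\cX+\dim H<n$, and a one-dimensional algebraic subgroup $H$ gives $\dim\cX+\dim H=n-1<n$ only when $\dim\cX=n-2$. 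With the hypersurface alone, $\dim\cX+\dim H=n$ and the unlikely-intersection dichotomy you need simply is not available. You invoke anomalous loci instead, which is a different (and weaker) aspect of BMZ's work, and you never exploit the unit-circle constraint at all — so the argument is structurally missing its main engine.

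Beyond that, the size-$\ge 4$ case, which you yourself flag as the essential difficulty, is not actually proved: phrases like ``one controls the intersection $H_a\cap gK$ dimensionally'' and ``a further BMZ transversality argument'' are placeholders, not arguments. The paper resolves this precisely by combining BMZ Theorem~1.7 (finitely many candidate algebraic subgroups $\cH$) with a tropical argument: since $\trop\cX$ lies in a finite union of codimension-two coordinate subspaces forced by the two ultrametric balancing conditions, any positive-dimensional component $\cY\subset\cX\cap\cH$ forces $\Lambda^\perp\IR$ to meet one of those subspaces in high dimension, which produces a vector $v=\alpha(e_i-e_j)+\beta(e_k-e_l)\in\Lambda$ and thus $\Omega\subset(v\IZ)^\perp$, contradicting the hypothesis. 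Your outline gestures at the tropical facts but does not carry out this derivation. Finally, a small local error: in your size-two class case, $z_0^{a_i-a_j}=-1$ with $z_0\notin\mu_\infty$ is simply impossible (it would make $z_0$ a root of unity), not something that ``forces $a_i=a_j$''; the correct conclusion is that this subcase cannot occur, which happens to be harmless, but the reasoning as written is wrong.
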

\begin{proof}
We consider an irreducible component  $\mathcal{X}\subset\IGm^n$ of the
zero set of 
  \begin{equation}
\label{eq:defineX}
    1+X_1+\cdots + X_n \quad\text{and}\quad 1+X_1^{-1}+\cdots +
    X_n^{-1}. 
  \end{equation}
Then $\dim\mathcal{X}= n-2$
as these two polynomials are coprime in $\IC[X_1^{\pm
    1},\ldots,X_n^{\pm 1}]$.

Say $a=(a_1,\ldots,a_n)\in \Omega$ such that $P_a$ has a root $z\in
S^1\ssm\mu_\infty$. Then $1+z^{a_1}+\cdots + z^{a_n}=0$ by definition and after
applying complex conjugation we find 
\begin{equation*}
  1+z^{-a_1}+\cdots + z^{-a_n}=0.
\end{equation*}
So $z^a=(z^{a_1},\ldots,z^{a_n})\in {\mathcal X}(\IC)$ for one of the
irreducible components above.

But $z^a$ also lies in an algebraic subgroup of $\IGm^n$ of dimension
$1$. According to Bombieri, Masser, and Zannier's Theorem 1.7
\cite{BMZGeometric} there are two cases. Either $z^a$  lies in a finite
set that depends only on $\mathcal{X}$, and hence only on $n$, or
$z^a\in \cH(\IC)$ where $\cH\subset\IGm^n$  is an irreducible component of an
algebraic subgroup with  
\begin{equation*}
 \dim_{z^a} \mathcal{X}\cap \cH\ge \max\{1,\dim \mathcal{X} + \dim \cH - n +
 1\}
= \max\{1, \dim  \cH - 1\}. 
\end{equation*}
Moreover, in the second case $\cH$ comes from a finite set that
depends only on $n$, cf. the first paragraph of the proof of Theorem 1.7 \cite{BMZGeometric}
on page 26. 

In the first case we claim that $a$ must lie in one of finitely many
subgroups of $\Omega$ of rank $1 \le m-1$. Indeed, we may assume $a\not=0$. If $z'\in S^1\ssm\mu_\infty$ is a root
of some $P_{a'}$ with $a'\in \Omega$ and $z^{a}={z'}^{a'}$, then $a$
and $a'$ are linearly dependent as $z'$ is not a root of unity. 
Our claim follows as there are only finitely many possible $z^{a}$ in
this case. We add these rank $1$ subgroups to our collection of
$\Omega_i$.  

In the second case there is an irreducible component
$\cY\subset\mathcal{X}\cap \cH$
of positive dimension at least $\dim\cH -1$
that contains $z^a$.
In this case $n\ge 3$.
We recall that algebraic subgroups of $\IG_m^n$ are in bijection with
subgroups of $\IZ^n$, see Theorem 3.2.19 \cite{BG} for details. 
 Let $\Lambda\subset\IZ^n$ be a subgroup
from a finite set depending only on $n$ with rank $r=n - \dim \cH$ such that
$\cH$ is contained in the algebraic subgroup defined by all
\begin{equation}
\label{eq:defineH}
X_1^{b_1}\cdots X_n^{b_n}-1 \quad\text{where}\quad
(b_1,\ldots,b_n)\in \Lambda.
\end{equation}
Hence $z^{\langle a,b\rangle}=1$ for all $b\in\Lambda$.
As $z$ is not a root of unity we find 
$a\in \Lambda^{\perp}$. 

We would like to add $\Omega \cap \Lambda^{\perp}$ to our list of 
$\Omega_i$. However, we must 
ensure that its rank is at most $m-1$.
Once this is done, our proof is complete.

Suppose the rank does not drop, then 
$[\Omega : \Omega\cap \Lambda^{\perp}] \Omega \subset \Lambda^{\perp}$
and hence $\Omega \subset\Lambda^{\perp}$ because $\Lambda^{\perp}$ is primitive.
We now derive a contradiction from this situation by  analyzing
$\Lambda$.

Since all monomials 
  (\ref{eq:defineH}) vanish on $\cH$ we find that $\trop{\cH}$ lies in
$\Lambda^{\perp}\IR$,
  the vector subspace of $\IR^n$ generated by 
  $\Lambda^{\perp}$.
We  also  need to study $\trop{\cX}$. 
Say $(x_1,\ldots,x_n)\in \cX(\IC)$. Since it is a zero of the
 first polynomial in (\ref{eq:defineX}) and by
 the ultrametric
triangle inequality  the minimum among
$0, \ordS(x_1),\ldots,\ordS(x_n)$ is attained twice. 
The same argument applied to 
 the second polynomial in (\ref{eq:defineX}) shows that the
maximum is attained twice. 
Hence $\trop{\cX}$  is contained in the finite union
of the codimension $2$ vector subspaces of
$\IR^n$ defined by relations
\begin{equation*}
  \ordS(x_i) = \ordS(x_j), \quad \ordS(x_k) = \ordS(x_l) \quad \text{with} \quad \#\{i,j,k,l\} = 4
\end{equation*}
and 
\begin{equation*}
  \ordS(x_i) = \ordS(x_j), \quad \ordS(x_k) = 0 \quad \text{with} \quad \#\{i,j,k  \} = 3.
\end{equation*}

By the discussion before this lemma, 
$\trop{\cY}\subset \trop{\cX}\cap\trop{\cH} \subset \trop{\cX}\cap
\Lambda^{\perp}\IR$. Moreover, the projection of $\trop{\cY}$
 to some choice of  $\dim\cY$ distinct coordinates of $\IR^n$ contains $\IQ^{\dim \cY}$. 
So  $\Lambda^{\perp}\IR$ intersected with  one of the codimension
$2$ 
subspaces mentioned above must have dimension at least $\dim\cY \ge \dim\cH -1 = n-r-1$. 
Therefore, there exists $(\alpha,\beta)\in\IZ^2\ssm\{0\}$ and pairwise
distinct $i,j,k,l\in \{0,\ldots,n\}$ with 
$ v= \alpha( e_i - e_j)  + \beta (e_k- e_l) \in \Lambda$. 
Recall that $\Omega\subset\Lambda^{\perp}$. 
So $\Omega$ lies in the orthogonal complement of $v$, which contradicts the hypothesis of the lemma. 
\end{proof}

Suppose $n\ge 2$, let $\Omega\subset\IZ^n$ be a subgroup of rank at
least $2$ and let $a\in\IZ^n$. For a prime $p$  we define
\begin{equation}
\label{def:rhop}
  \rho_p(a;\Omega) = \inf \left\{|\omega| : \omega\in \Omega\ssm\{0\} 
\text{ and }\langle\omega,a\rangle \equiv 0 \imod p \right\}
\end{equation}
this is a well-defined real number  as the set is non-empty.

\begin{proposition}
\label{prop:norootsonunitcircle}
 Let $\Omega\subset\IZ^n$ be a subgroup of rank
 $m\ge 2$ that
 satisfies the following hypothesis. 
   If $(\alpha,\beta)\in\IZ^2\ssm\{0\}$ and if $i,j,k,l\in \{0,\ldots,n\}$
are pairwise distinct with $v = \alpha(e_i-e_j) + \beta ( e_k-e_l)$,
then  $\Omega\not\subset (v\IZ)^{\perp}$.
Then there
  exists a constant $c=c(\Omega)\ge 1$ with the following property.  Say
  $a\in\Omega$ and let $p$ be a prime with  $\rho_p(a;\Omega)\ge c$. 
Then there exist $t\in\IZ$ with $p\nmid t$ and
  $\omega\in\Omega$  such that  $a'=ta-p\omega\not=0$, 
  \begin{enumerate}
  \item [(i)] we have $|a'|\le cp^{1-1/m}$, and
  \item[(ii)]   the Laurent polynomial $P_{a'} \in \IZ[X^{\pm 1}]$ 
does not vanish at any point of $S^1\ssm\mu_\infty$.
  \end{enumerate}
\end{proposition}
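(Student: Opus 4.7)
The plan is to combine Dirichlet's simultaneous approximation theorem with Lemma \ref{lem:bmz}, using the hypothesis $\rho_p(a;\Omega)\geq c$ to ensure that the resulting approximant avoids the exceptional sublattices furnished by that lemma. Fix a basis $\omega_1,\ldots,\omega_m$ of $\Omega$ and write $a=\sum a_i\omega_i$. Dirichlet's theorem applied to $(a_1/p,\ldots,a_m/p)\in\IR^m$ with parameter $N=p-1$ yields $t\in\{1,\ldots,p-1\}$ (so $p\nmid t$) and $b\in\IZ^m$ with $\max_i|ta_i-pb_i|\leq p(p-1)^{-1/m}$. Setting $\omega=\sum b_i\omega_i\in\Omega$ and $a'=ta-p\omega$ yields $|a'|\leq C_1 p^{1-1/m}$ with $C_1$ depending only on $\Omega$. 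The condition $\rho_p(a;\Omega)\geq c$ will then force $a'\neq 0$: if $a'=0$, then $ta\in p\Omega$ combined with $\gcd(t,p)=1$ gives $a\in p\Omega$, whence $\langle\omega',a\rangle\in p\IZ$ for every nonzero $\omega'\in\Omega$, bounding $\rho_p(a;\Omega)$ by the length of a shortest nonzero vector of $\Omega$.

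The main point, and where the hypothesis $\rho_p(a;\Omega)\geq c$ is essential, is showing that $a'$ lies outside $\bigcup_{i=1}^N\Omega_i$ for the rank $\leq m-1$ subgroups supplied by Lemma \ref{lem:bmz}. Let $\Omega_i^{\#}$ denote the saturation of $\Omega_i$ in $\Omega$. Since $\mathrm{rank}(\Omega_i^{\#})\leq m-1$, the orthogonal complement of $\Omega_i^{\#}\otimes\IR$ inside $\Omega\otimes\IR$ (with respect to the standard inner product on $\IR^n$) has positive dimension and is defined over $\IQ$, so it contains a nonzero lattice vector $u_i\in\Omega$ whose sup-norm is bounded by a constant $c_i=c_i(\Omega)$. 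If $a'\in\Omega_i\subset\Omega_i^{\#}$, then $ta=a'+p\omega\in\Omega_i^{\#}+p\Omega$; since the quotient $\Omega/(\Omega_i^{\#}+p\Omega)$ is annihilated by $p$ and $p\nmid t$, we deduce $a\in\Omega_i^{\#}+p\Omega$. Writing $a=\omega^{*}+p\omega^{**}$ with $\omega^{*}\in\Omega_i^{\#}$ and $\omega^{**}\in\Omega$ then gives $\langle u_i,a\rangle=p\langle u_i,\omega^{**}\rangle\equiv 0\imod{p}$, which forces $\rho_p(a;\Omega)\leq|u_i|\leq c_i$.

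Choosing $c=c(\Omega)\geq 1$ larger than $C_1$, the length of a shortest nonzero vector of $\Omega$, and every $c_i$ simultaneously produces a constant with the desired property: part (i) holds by construction, and combining $a'\notin\bigcup_i\Omega_i$ with Lemma \ref{lem:bmz} yields part (ii). I expect the main obstacle to be the step above that connects $\rho_p$ to avoidance of the $\Omega_i$; the key insight is that every potential obstruction---namely $a\in p\Omega$ or $a\in\Omega_i^{\#}+p\Omega$---is witnessed by a short nonzero element of $\Omega$ that is $p$-orthogonal to $a$ in the sense of (\ref{def:rhop}), and this is precisely what $\rho_p(a;\Omega)\geq c$ prevents.
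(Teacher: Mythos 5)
Your proposal is correct and follows essentially the same route as the paper: fix a basis of $\Omega$, apply Dirichlet to $a/p$ with parameter $p-1$ to produce $t$ and $\omega$, and then use the hypothesis $\rho_p(a;\Omega)\ge c$ to rule out $a'$ landing in the exceptional subgroups from Lemma~\ref{lem:bmz}, where the ruling-out is witnessed by a short nonzero vector of $\Omega$ orthogonal to $\Omega_i$ (the paper's $\omega^*\in\Omega\cap(\Omega_i)^\perp$ plays the role of your $u_i$). The only stylistic difference is that the paper deduces $\langle\omega^*,a\rangle\equiv 0\imod p$ directly from $\langle\omega^*,a'\rangle=0$ and $a'=ta-p\omega$, whereas you route through the quotient $\Omega/(\Omega_i^{\#}+p\Omega)$ and treat $a'\ne 0$ as a separate case; both are sound.
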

\begin{proof}
The proposition follows from combining
 Lemma \ref{lem:bmz} with Dirichlet's Theorem from diophantine
 approximation.  
Indeed, we let $\Omega_1,\ldots,\Omega_N$
be the subgroups  of $\Omega$ from this lemma. If $N=0$ we set
$\Omega_1=\{0\}$. We will see how to choose $c$ below. 

  We fix a basis $(\omega_1,\ldots,\omega_m)$ of the abelian group
  $\Omega$. Then $a = \nu_1\omega_1+\cdots+\nu_m\omega_m$, where
$\nu_1,\ldots,\nu_m\in\IR$ are unique. 

If $p\ge 3$, then Dirichlet's Theorem, cf. Theorem 1B \cite{SchmidtLNM1467}, applied
to $\nu_1/p,\ldots,\nu_m/p,$ and $p-1>1$ yields
$t,\nu'_1,\ldots,\nu'_m\in\IZ$ such that $1\le t\le p-1$ and  $|t\nu_i/p -
\nu'_i|\le(p-1)^{-1/m}$. 
The  same conclusion holds for $p=2$.
 We set $\omega = \sum_{i=1}^m \nu'_i
\omega_i\in\Omega$.  Then 
$|a'/p| \le (|\omega_1|+\cdots+|\omega_m|)(p-1)^{-1/m} \le c p^{-1/m}$ 
where $a'=ta-p\omega$ for $c$ large enough.  This yields part (i). 

As each $\Omega_i$ has rank at most $m-1$ we have
 $\Omega \cap (\Omega_i)^{\perp}\not=0$ for all $i$. For each $i$
we fix a non-zero
$\omega^* \in \Omega\cap (\Omega_i)^{\perp}$ of minimal norm. 
If $\langle \omega^*,ta-p\omega\rangle = 0$, then 
$\langle \omega^*,a\rangle\equiv 0  \imod p$, since $p\nmid t$. So
$|\omega^*|\ge  \rho_p(a;\Omega)\ge c$ by hypothesis. We can avoid this outcome by fixing  $c$ large in terms of the
$\Omega\cap (\Omega_i)^{\perp}$. Thus $\langle \omega^*,a'\rangle
\not= 0$. This
implies $a' \not\in \Omega_i$ for all $1\le i\le N$ 
and in particular $a'\not=0$. Part (ii) follows from the conclusion
of Lemma \ref{lem:bmz}. 
\end{proof}

\section{Rational Points Close to a  Definable Set}
\label{sec:approxdefinable}

In this section we prove  Theorem \ref{thm:countrootsof1}.
To do this we temporarily adopt the language of o-minimal structures. Our main reference is van den Dries' book \cite{D:oMin} and his paper
with Miller \cite{DM:96}. We work exclusively with
the o-minimal structure $\IRan$ of restricted analytic functions. It contains the graph
of any
function  $[0,1]^n\rightarrow\IR$
that is the restriction of an analytic function  $\IR^n\rightarrow\IR$. 

The main technical tool in this section is a result of the author \cite{approxdefinable}
 which we cite in a special case below. 
We retain much of
the notation used in the said reference.
Roughly speaking, the result gives an upper bound for the number of rational
points of bounded height that are close to a subset of $\IR^n$ that is
definable in $\IRan$. 
Note that $\IRan$ is a polynomially bounded o-minimal
structure as required by this reference. 

For any subset $Z\subset\IR^n$   we write $\alg{Z}$ for the union of all
connected, semi-algebraic sets that are contained completely in
$Z$. For $\epsilon > 0$ we define
$\nbhd{Z,\epsilon}$ to be the set of $y\in\IR^n$ for which 
$|x-y|<\epsilon$ for some $x\in Z$. We recall that the height
$H(\cdot)$ was
defined in Section \ref{sec:notation}.

\begin{theorem}[Theorem 2 \cite{approxdefinable}]
\label{thm:approx2}
Let $Z\subset\IR^n$ be a closed set that is definable in $\IRan$ and let
$\epsilon>0$.
 There exist 
  $c=c(Z,\epsilon)\ge 1$ and
  $\theta=\theta(Z,\epsilon) \in (0,1]$ such that if $\lambda \ge\theta^{-1}$ then
\begin{equation*}
   \#\left\{q \in \IQ^n \ssm \nbhd{\alg{Z},T^{-\theta\lambda}} :  H(q)\le T
   \text{ and there is }
x\in Z\text{ with }|x-q|<T^{-\lambda} \right\}
\le cT^\epsilon
\end{equation*}
for all $T\ge 1$. 
\end{theorem}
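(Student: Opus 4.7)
The plan is to translate the cardinality in (\ref{eq:count}) into a count of rational points in $(1/p)\IZ^n$ close to a definable set in $\IRan$, and then invoke Theorem \ref{thm:approx2}. Writing $\zeta_j = e^{2\pi\sqrt{-1}s_j/p}$ with $s_j\in\{0,\ldots,p-1\}$ and $s=(s_1,\ldots,s_n)$, I attach to each $t\in\IF_p$ the rational point $q_t=(ts_1/p,\ldots,ts_n/p) \bmod \IZ^n\in[0,1)^n$. Then $H(q_t)\le p$ and $|1+\zeta_1^t+\cdots+\zeta_n^t|=|F(q_t)|$, where $F(u)=1+\sum_{j=1}^n e^{2\pi\sqrt{-1}u_j}$. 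The zero set $Z=\{u\in[0,1]^n:F(u)=0\}$ is closed and definable in $\IRan$ as $F$ is built from restricted exponentials.

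The first technical step is to convert smallness of $|F(q_t)|$ into proximity of $q_t$ to $Z$. By the \L{}ojasiewicz inequality for definable functions in a polynomially bounded o-minimal structure, there are constants $\alpha\in(0,1]$ and $C_1>0$ depending only on $n$ with $\dist{u,Z}\le C_1|F(u)|^\alpha$ for every $u\in[0,1]^n$. Hence $|F(q_t)|<c^{-1}p^{-\lambda}$ yields $\dist{q_t,Z}<p^{-\lambda'}$ with $\lambda'=\alpha\lambda-O(1)$, a quantity that can be made as large as desired. Applying Theorem \ref{thm:approx2} to $Z$ with the given $\epsilon$ and with $T=p$, and choosing $\lambda$ so that $\lambda'\ge\theta^{-1}$ for the resulting $\theta=\theta(n,\epsilon)$, bounds by $O(p^\epsilon)$ the number of $q_t$ satisfying the height and distance conditions but lying outside $\nbhd{\alg{Z},p^{-\theta\lambda'}}$.

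It remains to bound the number of $t\in\IF_p$ for which $q_t\in\nbhd{\alg{Z},p^{-\theta\lambda'}}$. By o-minimality, $\alg{Z}$ is a finite union of connected semi-algebraic subsets of $Z$, and under the exponential map $u\mapsto(e^{2\pi\sqrt{-1}u_1},\ldots,e^{2\pi\sqrt{-1}u_n})$ each piece sits inside an irreducible algebraic subvariety of $\IGm^n$ contained in the hyperplane $V=\{1+X_1+\cdots+X_n=0\}$. By the Bombieri-Masser-Zannier theorem already used in Section \ref{sec:lacunary} (or by Laurent's theorem on torsion subvarieties), every positive-dimensional such subvariety is a coset $\eta H$ with $\eta$ of finite order and $H$ a proper subtorus. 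Therefore $\alg{Z}$ is covered by finitely many cosets described by relations $\langle\omega_i,u\rangle\equiv c_i\imod\IZ$ with $\omega_i\in\IZ^n\ssm\{0\}$ and rational $c_i$. Since $q_t\in(1/p)\IZ^n$ and $\theta\lambda'>1$ for $\lambda$ large, proximity of $q_t$ to such a coset forces an exact linear congruence $t\langle\omega_i,s\rangle\equiv a_i\imod p$ for some integer $a_i$. Either this congruence pins down $t$ modulo $p$ or it is vacuous; in the vacuous case the same coset would contain $q_1=(s_1/p,\ldots,s_n/p)$, contradicting the hypothesis $1+\zeta_1+\cdots+\zeta_n\ne 0$. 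Summing over the fixed finite list of cosets adds only $O(1)$ to the count, and combining with the previous paragraph gives the desired $O(p^\epsilon)$ bound.

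The main obstacle is the structural analysis of $\alg{Z}$ in the third paragraph: one must combine the o-minimal finiteness of the semi-algebraic components of $Z$ with the torus-geometric input of Bombieri-Masser-Zannier to identify each component as lying in a coset of a real subtorus contained in $V$, and then exploit the nonvanishing hypothesis to rule out the vacuous-congruence case. The \L{}ojasiewicz reduction and the final arithmetic estimate are routine once this coset decomposition is available, so the geometric description of $\alg{Z}$ is where the substance of the proof lies.
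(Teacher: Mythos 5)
Your proposal does not prove the statement in question. The statement is Theorem \ref{thm:approx2} itself, i.e.\ the counting result for rational points near a definable set (Theorem 2 of \cite{approxdefinable}), which in the paper is an imported black box and is not reproved there. What you have written is instead a sketch of the deduction of Theorem \ref{thm:countrootsof1} (the bound (\ref{eq:count}) on small sums of roots of unity) \emph{assuming} Theorem \ref{thm:approx2}: your very first sentence says the plan is to ``invoke Theorem \ref{thm:approx2}''. Relative to the task this is circular — you use the result you were asked to establish — and none of the ingredients actually needed for Theorem \ref{thm:approx2} (a Pila--Wilkie-type parametrization argument, the dichotomy between the algebraic part and the transcendental part of a definable set, polynomial boundedness of $\IRan$, etc.) appear anywhere in your text.

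Even read as a proof of Theorem \ref{thm:countrootsof1}, your third paragraph diverges from the paper's argument in a way that has a gap. The paper describes $\alg{Z}$ via Ax's theorem: a semi-algebraic piece of $Z$ forces a vanishing proper subsum $1+\sum_{j\in J} e^{2\pi\sqrt{-1}x'_j}=0$, and the proof then proceeds by induction on $n$, treating the case of a genuinely vanishing subsum by passing to the normalized complementary sum $1+\sum_{j\in I\ssm\{j_0\}}\zeta_j\zeta_{j_0}^{-1}$. Your alternative — covering $\alg{Z}$ by torsion cosets via Bombieri--Masser--Zannier or Laurent and extracting an exact congruence on $t$ — is not justified: the connected semi-algebraic subsets of $Z$ live in the real coordinates $x$ and are not a priori images of algebraic subvarieties of $\IGm^n$, so the torsion-coset description does not apply directly; and the step ``proximity of $q_t$ to a coset forces an exact congruence, which either pins down $t$ or is vacuous'' is asserted rather than proved, and in the non-vacuous case a single congruence modulo $p$ determines $t$ only up to one residue per coset, which is fine, but the vacuous-case contradiction with $1+\zeta_1+\cdots+\zeta_n\neq 0$ is not established. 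In short: the substance of the statement you were given lies in \cite{approxdefinable} and is absent from your proposal, and the surrounding deduction you do sketch replaces the paper's Ax-plus-induction argument with an unproven coset decomposition.
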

\begin{proof}[Proof of Theorem \ref{thm:countrootsof1}]
For $n=1$ the theorem follows with $\lambda = 1$ and taking
 $c$ sufficiently large. Our proof is by induction on $n$ and we suppose
$n\ge 2$. We will choose $c\ge 1$ and $\lambda\ge 1$ in terms of $n$ and
$\epsilon$ during the argument. 

Let
\begin{equation*}
  Z = \left\{(x_1,\ldots,x_n) \in [0,1]^n : 1 +  e^{2\pi \sqrt{-1}
    x_1} + \cdots +  e^{2\pi \sqrt{-1} x_n} = 0 \right\}
\end{equation*}
which is compact and definable in $\IRan$.

Let us write 
$\zeta_j = e^{2\pi \sqrt{-1} q_j}$ for $j\in \{1,\ldots,n\}$ with $q_j \in
 \frac 1p\IZ\cap[0,1)$. 
Say $t$ is as in the set (\ref{eq:count}). For convenience, we
identify it with its  representative in
$\{0,\ldots,p-1\}$. Then 
\begin{equation}
\label{eq:smallsum2}
  \left| 1+  \zeta_1^{t} + \cdots + 
  \zeta_n^{t}\right|
<c^{-1}p^{-\lambda},
\end{equation}
 $(\zeta_1,\ldots,\zeta_n)$ has precise order $p$, and $t\not=0$. 
We claim that 
$\tilde{q}_t = (t q_1-\lfloor tq_1\rfloor,\ldots,t q_n-\lfloor tq_n\rfloor ) \in  \frac 1p \IZ^n \cap [0,1)^n$
  lies close to  $Z$. 
Indeed, a suitable version of \L ojasiewicz's Inequality, see 4.14.(2) \cite{DM:96},
implies
that the distance of this point to $Z$ is at most 
\begin{equation*}
 c_1 \left |1 + e^{2\pi \sqrt{-1} t q_1}+\cdots +
  e^{2\pi \sqrt{-1} t q_n}\right|^\delta
\le
c_1 c^{-\delta} p^{-\lambda\delta} \le c_1 c^{-\delta}. 
\end{equation*}
where $c_1>0$ and $\delta>0$ depend only on $Z$. 
We may assume $c_1 c^{-\delta}<1$,
 so
\begin{equation}
\label{eq:tildeqtx}
|\tilde{q}_t-x|<p^{-\lambda\delta}\quad
\text{for some $x\in Z$.}
\end{equation}

The vectors $\tilde{q}_0,\ldots,\tilde{q}_{p-1}$ are pairwise
distinct.
So it is enough to bound the number of $\tilde{q}_t$ with (\ref{eq:tildeqtx}). 

If $\lambda$ is sufficiently large, then $\lambda\delta \ge\theta^{-1}$ where
$\theta$ is provided by Theorem   \ref{thm:approx2} applied
to $Z$ and $\epsilon$. 
So there are at most $cp^\epsilon$ many 
$\tilde{q}_t$
 that are
not
in the $p^{-\theta\lambda\delta}$-tube around $\alg{Z}$.

To prove (\ref{eq:count}) we need only consider those
$t$ with 
 $| \tilde{q}_t-x'|<p^{-\theta\lambda\delta}$
for some $x'=(x'_1,\ldots,x'_n)\in
\alg{Z}$. 
The algebraic locus $\alg{Z}$ is well understood; see Ax's work
\cite{Ax:Schanuel} and how it is applied for example in 
the proof of Theorem 7 \cite{approxdefinable}. There exists
$\emptyset\not=J\subsetneq \{1,\ldots,n\}$ such that
$ 1 + \sum_{j\in J}  e^{2\pi \sqrt{-1} x'_j}=0$.
We subtract this from the partial sum over the coordinates of $t (q_1,\ldots,q_n)$ and get
\begin{equation*}
  1 + \sum_{j\in J}  \zeta_j^t = 
\sum_{j\in J} (e^{2\pi \sqrt{-1} t q_j}-e^{2\pi \sqrt{-1} x'_j}).
\end{equation*}
Thus
\begin{equation*}
  \left|1 + \sum_{j\in J}  \zeta_j^t\right|
\le\sum_{j\in J} \left|e^{2\pi \sqrt{-1} (t q_j-\lfloor tq_j \rfloor)}-e^{2\pi \sqrt{-1} x'_j}\right|
\le 2\pi(n-1)|\tilde{q}_t-x'|.
\end{equation*}
Hence there is a constant $c_2>0$ depending only on $n$ 
with
\begin{equation*}
  \left|1 + \sum_{j\in J}  \zeta_j^t \right| 
<  c_2 p^{-\theta\lambda\delta}. 
\end{equation*}

Recall $\# J\le n-1$. Now suppose $c'>0$ and $\lambda'  >0$ are the
constants from this theorem applied by induction to the terms in $J$.
We are free to assume that $\lambda$ satisfies
$2^{\theta\lambda\delta/2} \ge c_2c'$ and
$\theta\lambda\delta \ge 2 \lambda'$. Then 
$c_2p^{-\theta\lambda\delta} \le c_2 2^{-\theta\lambda\delta/2}
p^{-\theta\lambda\delta/2} \le {c'}^{-1}p^{-\lambda'}$
 as $p\ge 2$.
So
\begin{equation*}
  \left|1+ \sum_{j\in J}  \zeta_j^t 
  \right|<{c'}^{-1}p^{-\lambda'}. 
\end{equation*}

If  $1+\sum_{j\in J} \zeta_j\not=0$, then 
(\ref{eq:count}) follows from induction. 

On the other hand, if this sum vanishes, then its Galois conjugates
 $1+\sum_{j\in J}\zeta_j^t$ vanish for all
$t\in \IF_p^\times$.
By hypothesis, the normalized complementary sum
\begin{equation*}
1+  \sum_{j\in I\ssm\{j_0\}} \zeta_j\zeta_{j_0}^{-1}
\end{equation*}
is non-zero; here $I =  \{1,\ldots,n\}\ssm J$ and $j_0\in I$. We may apply
induction since $|I|-1 \le n-2$. Using (\ref{eq:smallsum2}) we find 
\begin{equation*}
\left|1 +  \sum_{j\in I \ssm \{j_0\}}
  (\zeta_j\zeta_{j_0}^{-1})^t\right|
=\left|1+ \zeta_1^t + \cdots + \zeta_n^t\right| < c^{-1} p^{-\lambda}. 
\end{equation*}
and there are at most $cp^\epsilon$ possibilities for $t$. 
\end{proof}

\section{Counting Small Sums of Roots of Unity}

Let $P_a$ be a lacunary Laurent polynomial as in (\ref{def:Pa}) with
$a\in\IZ^n$ where $n\ge 1$. 
The goal of this section is to 
bound the number of roots $\alpha$ of $P_a$ coming from $\scrB=\scrB(p,a,\lambda)$
defined in 
(\ref{def:scrB}). 
If $\alpha^p$ is close to $1$, then $\alpha$ is close to a root of
unity  $\zeta$ with $\zeta^p=1$. So $|P_a(\zeta)|$ will be
small. Thus $1+\zeta^{a_1}+\cdots + \zeta^{a_n}$ is small in modulus
where $a =(a_1,\ldots,a_n)$. 
This is where the counting result  proved in  Section \ref{sec:approxdefinable}
 comes into play. 

We make the first part of this approach precise in the next lemma. 

\begin{lemma}
\label{lem:zetaapproxalpha}
 Suppose $\alpha = re^{2 \pi \sqrt{-1}\vartheta}$ 
  with $r\in (0,1]$ and $\vartheta\in [0,1)$. 
If $p\ge 2$ is an integer with $|\alpha^p-1|\le 1/2$, then there
exists
$t\in\{0,\ldots,p\}$ such that 
\begin{equation}
\label{eq:zetaapproxalpha}
\left|\vartheta-\frac tp\right| \le \frac{1}{4\sqrt{2}} |\alpha^p-1|.
\end{equation}
If in addition $a\in\IZ^n$ and $P_a(\alpha)=0$, then 
\begin{equation}
\label{eq:zetaapproxalpha2}
\left|P_a\left(e^{2\pi \sqrt{-1}t/p}\right)\right|\le 5n |a|
|\alpha^p-1|. 
\end{equation}
\end{lemma}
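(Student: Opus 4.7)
The plan is to handle the two inequalities separately, with the first being a direct trigonometric computation and the second a telescoping estimate that relates $P_a(\zeta)$ to $P_a(\alpha)=0$.

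For \eqref{eq:zetaapproxalpha}, I would write $\alpha = re^{2\pi\sqrt{-1}\vartheta}$ and expand
\begin{equation*}
|\alpha^p - 1|^2 = r^{2p} - 2r^p\cos(2\pi p\vartheta) + 1 = (r^p - 1)^2 + 4 r^p \sin^2(\pi p \vartheta),
\end{equation*}
then choose $t\in\{0,\ldots,p\}$ to be a nearest integer to $p\vartheta$, so that $|p\vartheta - t|\le 1/2$. The elementary bound $|\sin(\pi s)|\ge 2|s|$ for $|s|\le 1/2$, combined with $r^p \ge 1 - |\alpha^p - 1| \ge 1/2$, yields $|\alpha^p - 1|^2 \ge 8(p\vartheta - t)^2$. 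Dividing by $p^2$ gives $|\vartheta - t/p| \le |\alpha^p - 1|/(2\sqrt{2}\,p)$, which is at most $|\alpha^p-1|/(4\sqrt{2})$ because $p\ge 2$.

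For \eqref{eq:zetaapproxalpha2}, the starting point is
\begin{equation*}
P_a\bigl(e^{2\pi\sqrt{-1}t/p}\bigr) = P_a(\zeta) - P_a(\alpha) = \sum_{j=1}^n (\zeta^{a_j} - \alpha^{a_j}),
\end{equation*}
where $\zeta = e^{2\pi\sqrt{-1}t/p}$. For each $j$ I would factor
\begin{equation*}
\zeta^{a_j} - \alpha^{a_j} = e^{2\pi\sqrt{-1} a_j\vartheta}\Bigl(\bigl(e^{2\pi\sqrt{-1}a_j(t/p - \vartheta)} - 1\bigr) + (1 - r^{a_j})\Bigr),
\end{equation*}
giving the termwise bound $|\zeta^{a_j} - \alpha^{a_j}| \le 2\pi|a_j|\,|\vartheta - t/p| + |1 - r^{a_j}|$. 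The first summand is controlled by \eqref{eq:zetaapproxalpha}. For the second, I would use $1 - r \le 1 - r^p \le |\alpha^p - 1|$ (valid because $r\in(0,1]$) together with the consequence $r^p\ge 1/2$ of the hypothesis, whence $r^{-|a_j|}\le 2$ in the relevant regime $|a_j|\le p$, producing $|1 - r^{a_j}|\le 2|a_j|\,|\alpha^p-1|$ regardless of the sign of $a_j$. Summing the two bounds over $j$ gives a constant $(\pi/(2\sqrt{2})+2)n < 5n$ in front of $|a|\,|\alpha^p-1|$, as required.

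The main delicacy lies in the case $a_j<0$: then $|\alpha^{a_j}|=r^{a_j}\ge 1$ a priori, but the hypothesis $|\alpha^p-1|\le 1/2$ forces $r^p\ge 1/2$, which gives the uniform bound $r^{-|a_j|}\le 2$ needed to absorb the loss---provided $|a|\le p$, which is the regime in which the lemma is subsequently applied in Proposition \ref{prop:firstbound}.
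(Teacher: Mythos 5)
Your proof of the first inequality is correct and uses a slightly different but equally elementary route than the paper: you expand $|\alpha^p-1|^2$ trigonometrically and use $|\sin(\pi s)|\ge 2|s|$, whereas the paper invokes the inequality $|z-1|\ge |z|^{1/2}\,\bigl|z/|z|-1\bigr|$ and then the chord bound $|e^{2\pi\sqrt{-1}s}-1|\ge 4|s|$. Both are clean.

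Your proof of the second inequality, however, has a genuine gap, which you yourself flag: the termwise estimate $|1-r^{a_j}|\le 2|a_j|\,|\alpha^p-1|$ for negative $a_j$ relies on $r^{-|a_j|}\le r^{-p}\le 2$, hence on the extra hypothesis $|a|\le p$. That hypothesis is not part of the lemma's statement, and if $a_j<0$ with $|a_j|$ much larger than $p$ the factor $r^{-|a_j|}$ can be as large as $2^{|a_j|/p}$, so the bound breaks down. The paper sidesteps this entirely by first multiplying through by $X^e$ with $e=\max\{0,-a_1,\ldots,-a_n\}$, so that $X^eP_a$ has only non-negative exponents. It then writes $|P_a(\xi)|=|\xi^eP_a(\xi)-\alpha^eP_a(\alpha)|\le\sum_{k=0}^n|\xi^{a_k+e}-\alpha^{a_k+e}|$ with $a_0=0$, notes that the term with $a_k+e=0$ vanishes, and for the remaining $n$ terms uses the telescoping bound $|\xi^m-\alpha^m|\le m|\xi-\alpha|$, which is valid because $m=a_k+e\ge 0$, $|\xi|=1$, and $|\alpha|\le 1$. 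Since $a_k+e\le 2|a|$, this gives $|P_a(\xi)|\le 2n|a|\,|\xi-\alpha|$, and combined with the paper's direct bound $|\xi-\alpha|\le(1+\pi/\sqrt 8)|\alpha^p-1|$ it yields the constant $5n$ with no condition on $|a|$. The key missing idea in your argument is therefore the normalization by $X^e$, which makes all exponents non-negative before telescoping rather than handling negative exponents case by case.
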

\begin{proof}
Observe that $|z-1|\ge |z|^{1/2} |z/|z|-1|$ 
for all $z\in\IC\ssm\{0\}$, see Lemma 11.6.1 \cite{RahmanSchmeisser}. 
We  substitute
$z=\alpha^p$ to find
$|\alpha^p-1|\ge r^{p/2} |e^{2\pi \sqrt{-1}\vartheta p}-1|$. 
Now $1-r^p\le |r^p-1|\le|\alpha^p-1|$, so 
$r^p\ge 1-|\alpha^p-1|\ge 1/2$ by hypothesis. 
We find
\begin{equation}
\label{eq:alphapup}
  \left|e^{2\pi \sqrt{-1} \vartheta p}-1\right|\le \sqrt 2 |\alpha^p-1|. 
\end{equation}

Let $t$ be an integer with $|\vartheta p -t|\le 1/2$. Then $t\in
\{0,\ldots,p\}$ as $\vartheta\in [0,1)$. So
\begin{equation}
\label{eq:explb}
\left|e^{2\pi \sqrt{-1}\vartheta p}-1\right|=  
\left|e^{2\pi \sqrt{-1}(\vartheta p-t)}-1\right|\ge 4 |\vartheta p-t|
\end{equation}
by elementary geometry. 
Combining (\ref{eq:alphapup}) with (\ref{eq:explb}) and dividing by
$p\ge 2$
yields
 (\ref{eq:zetaapproxalpha}).

To prove the second claim 
we  set $\xi = e^{2\pi \sqrt{-1}t/p}$ and  estimate
\begin{alignat}1
\nonumber
  |\alpha-\xi| &=|re^{2\pi \sqrt{-1} \vartheta}-\xi|
\le |r-1|+|e^{2\pi \sqrt{-1}\vartheta}-\xi|
=|r-1|+|e^{2\pi \sqrt{-1}(\vartheta-t/p)}-1|
\\
\label{eq:distalphapthrootof1}
&\le |\alpha^p-1| + 2\pi |\vartheta-t/p| 
\le (1+{\pi}/{\sqrt{8}})|\alpha^p-1|;
\end{alignat}
where we used $|r-1|\le |r^p-1|\le |\alpha^p-1|$. 

We fix $e\in\IZ$ such that $X^e P_a(X)$ is a polynomial with non-zero
constant term. Then
\begin{equation*}
  |P_a(\xi)|  = |\xi^e P_a(\xi) - \alpha^e P_a(\alpha)|
  \le \sum_{k=0}^n |\xi^{a_k+e}-\alpha^{a_k+e}|
\end{equation*}
here $a = (a_1,\ldots,a_n)$ and $a_0=0$.
Some $a_k+e$ vanishes and we use $|\xi|=1$ and $|\alpha|\le 1$ to
find
\begin{equation*}
  |P_a(\xi)|   \le n \max_{0\le k\le n}\{ a_k + e \} |\xi-\alpha|
  \le 2n |a| |\xi-\alpha|.
\end{equation*}
We recall (\ref{eq:distalphapthrootof1}) to obtain
 (\ref{eq:zetaapproxalpha2}). 
\end{proof}

Next we show that many elements in $\scrB$ will lead to many different
roots of unity as given by the lemma above. 
The reason for this is that roots of lacunary
polynomials are nearly angularly equidistributed by a result
of Hayman, known already to
 Biernacki. 


\begin{lemma}
\label{lem:boundB}
Let $a\in\IZ^n$  and suppose $p\ge 2$ is an integer with
$|a|\le p$. If
$\lambda \ge 1$, then 
\begin{equation*}
\#\scrB\le 12n  \#\{ \zeta\in \mu_p:  |P_a(\zeta)|<5n|a|p^{-\lambda}
\}. 
\end{equation*}
\end{lemma}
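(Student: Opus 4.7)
I would associate to each $\alpha\in\scrB$ a $p$-th root of unity $\zeta(\alpha)\in\mu_p$ at which $|P_a|$ is already small, and then bound the multiplicity of this assignment with the help of the angular equidistribution of the zeros of a lacunary polynomial.

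Split $\scrB = \scrB^{-}\cup\scrB^{+}$ according to whether $|\alpha|<1$ or $|\alpha|>1$. Because $\lambda\ge 1$ and $p\ge 2$ one has $p^{-\lambda}\le 1/2$, so Lemma \ref{lem:zetaapproxalpha} applies in either case. For $\alpha\in\scrB^{-}$ it delivers $t\in\{0,\ldots,p\}$ with
\[
\bigl|P_a(e^{2\pi\sqrt{-1}t/p})\bigr|\le 5n|a|\,p^{-\lambda}
\quad\text{and}\quad
\bigl|\arg(\alpha)/(2\pi)-t/p\bigr|\le \tfrac{1}{4\sqrt{2}}\,p^{-\lambda},
\]
and we set $\zeta(\alpha)=e^{2\pi\sqrt{-1}t/p}$. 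For $\alpha\in\scrB^{+}$, apply the lemma instead to $1/\alpha$, which is a root of $P_{-a}$ with $|1/\alpha|<1$ and $|(1/\alpha)^p-1|<p^{-\lambda}$; this yields $\zeta'\in\mu_p$ with $|P_{-a}(\zeta')|\le 5n|a|p^{-\lambda}$, and we put $\zeta(\alpha)=1/\zeta'\in\mu_p$. Since $P_a(1/z) = P_{-a}(z)$ for $z\ne 0$, one also has $|P_a(\zeta(\alpha))|\le 5n|a|p^{-\lambda}$ in this case. In both constructions $\zeta(\alpha)$ is within arc-distance $\tfrac{1}{4\sqrt{2}}p^{-\lambda}\le \tfrac{1}{4\sqrt{2}p}$ of $\alpha$ on the unit circle (arguments normalised to lie in $[0,1)$).

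Consequently, the fibre of the map $\alpha\mapsto\zeta(\alpha)$ above any given $\zeta\in\mu_p$ is contained in the multiset of zeros of $P_a$ whose argument lies in an open arc of length at most $\tfrac{1}{2\sqrt{2}\,p}$ centred at $\arg(\zeta)$. By the classical theorem of Biernacki, sharpened by Hayman, on the angular distribution of zeros of a lacunary polynomial, the number of zeros of $X^e P_a$ in such a thin sector (counted with multiplicity) is bounded by a constant multiple of the number of non-constant monomials of $P_a$; tracking constants gives the bound $12n$. Summing the fibre estimate over those $\zeta\in\mu_p$ with $|P_a(\zeta)|<5n|a|p^{-\lambda}$ yields the claim.

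The main technical obstacle is pinning down the precise quantitative form of the Biernacki--Hayman equidistribution theorem so that the explicit constant $12n$ drops out for a polynomial with $n+1$ terms in an arbitrarily narrow sector; all the remaining steps are routine bookkeeping with Lemma \ref{lem:zetaapproxalpha} together with the identity $P_a(1/z)=P_{-a}(z)$.
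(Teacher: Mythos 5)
Your plan matches the paper's proof in its two key ideas: send each $\alpha\in\scrB$ to a nearby $p$-th root of unity via Lemma \ref{lem:zetaapproxalpha}, then bound the fibres of that assignment using the Biernacki--Hayman sector count for a polynomial with few terms. Your bookkeeping is in fact a bit cleaner: by mapping all of $\scrB$ directly into $\mu_p$ (handling $\scrB^{+}$ through $1/\alpha$ and $P_{-a}$, with $\zeta(\alpha)=1/\zeta'$), you avoid both the paper's asymmetric reduction $\#\scrB\le 2\max\{\#\scrB_{<1},\#\scrB_{>1}\}$ and the factor $2$ the paper pays for passing from $t\in\{0,\ldots,p\}$ to $\mu_p$; done carefully, your map gives a fibre bound of $3n$, hence $\#\scrB\le 3n\,\#\{\zeta\in\mu_p:\dots\}$, which is stronger than the stated $12n$.

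One detail you should make explicit, since you flag it as "the main technical obstacle": the Biernacki--Hayman bound (Proposition 11.2.4 in Rahman--Schmeisser, which is what the paper invokes) is \emph{not} simply a constant multiple of the number of terms. For a sector of angle $\theta$ and a polynomial $Q=X^eP_a$ of degree $d$ with $n+1$ terms, the count is $\frac{\theta d}{2\pi}+(n+1)$. Your sector has angle $\theta\le \pi/(\sqrt{2}\,p^{\lambda})$ and $d\le 2|a|$, so the degree term is $\le |a|/(\sqrt{2}\,p^{\lambda})$; it is only because the lemma assumes $|a|\le p$ and $\lambda\ge 1$ that this is $\le 1/\sqrt{2}$, letting the whole count be absorbed into $3n$ for $n\ge 1$. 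Without invoking these two hypotheses the degree contribution is unbounded and the fibre estimate fails.
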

\begin{proof}
We may assume $a\not=0$. 
Let us partition $\scrB=\scrB(p,a,\lambda)$ into 
$\scrB_{<1} = \{i\in \scrB : |\alpha_i|<1\}$ and 
$\scrB_{>1} = \{i\in \scrB : |\alpha_i|>1\}$.

We construct a map 
\begin{equation*}
\psi_{<1}:  \scrB_{<1} \rightarrow \{0,\ldots,p\}
\end{equation*}
in the following manner. 
If $i\in \scrB_{<1}$, then 
$\alpha_i = |\alpha_i|e^{2\pi \sqrt{-1}\vartheta}$
for $\vartheta\in [0,1)$, which depends on $i$, and we have
 $|\alpha_i^p-1|<
  p^{-\lambda}\le 1/2$. Lemma \ref{lem:zetaapproxalpha} yields $t\in\{0,\ldots,p\}$ with 
$|\vartheta-t/p|\le |\alpha_i^p-1|/\sqrt{32}<p^{-\lambda}/\sqrt{32}$. We set $\psi(i) = t$. 

We define
$\psi_{>1}:\scrB_{>1}\rightarrow\{0,\ldots,p\}$ in the same spirit. 
For if $i\in \scrB_{>1}$, then   $|\alpha_i|>1$ and there is 
$\vartheta\in [0,1)$ such that $\alpha_i=|\alpha_i|e^{-2\pi\sqrt{-1}\vartheta}$.  We apply the said lemma to
$\alpha_i^{-1}$
 and $P_{-a}$ to obtain $\psi(i) \in \{0,\ldots,p\}$ with $|\vartheta -
 \psi(i)/p|<p^{-\lambda}/\sqrt{32}$.

For $i\in \scrB_{<1}$ we get
\begin{equation*}
  \left|P_a(\xi^{\psi(i)})\right| < 5n|a|p^{-\lambda}
\end{equation*}
with $\xi = e^{2\pi \sqrt{-1}/p}$. If $i\in \scrB_{>1}$, the same
bound holds for
$|P_{-a}(\xi^{\psi(i)})|=|P_{a}(\xi^{-\psi(i)})|$. 

Now $\# \scrB\le 2 \#\scrB_{<1}$
or $\# \scrB\le 2 \#\scrB_{>1}$. We assume the former, the latter case
is dealt with similarly.

Let us fix $e\in\IZ$ such that $Q = X^e P(X)$ is a polynomial
with non-zero constant term. Then $\deg Q \le 2|a|$ and $Q$ has 
at most $n+1$ terms. 
Elements of $\scrB_{<1}$ that are in a fiber of $\psi$ come from roots of $Q$
that are in an open sector of the complex plane with angle
$4\pi p^{-\lambda}/\sqrt{32} = \pi p^{-\lambda}/\sqrt 2$.  
By Proposition 11.2.4 \cite{RahmanSchmeisser}, such an open sector contains at most
\begin{equation*}
 \frac{\deg(Q)}{2\sqrt 2 p^{\lambda}} + n+1
 \le \frac{2|a|}{2\sqrt{2} p^{\lambda}} + n+1
\le\frac{|a|}{\sqrt{2}p} + n+1 \le \frac{1}{\sqrt{2}}+n+1\le 3n
\end{equation*}
roots of $Q$, counting multiplicities; here we used
$\lambda\ge 1$ and $|a|\le p$.
Therefore,
\begin{equation*}
  \#\scrB_{<1} \le 3n\#\left\{ 0\le t\le p : |P_a(\xi^t)|< 5n|a| p^{-\lambda} \right \}   
\end{equation*}
We must compensate for the fact that we may be
 counting $1=\xi^0 = \xi^p$ twice, so
\begin{equation*}
  \#\scrB_{<1} \le 6n \#\left\{ 0\le t\le p-1 : |P_a(\xi^t)|< 5n|a| p^{-\lambda}  \right\}.
\end{equation*}
The lemma now follows from $\#\scrB\le 2\#\scrB_{<1}$. 
\end{proof}


\begin{proposition}
\label{prop:boundscrB}
For all $\epsilon > 0$  there exist constants $c=c(n,\epsilon)\ge 1$
and $\lambda=\lambda(n,\epsilon)\ge 1$ with the following property. 
Let $a\in\IZ^n$  and let $p$ be  a prime 
with
  $|a|\le p$ such that $P_a$ does not vanish at any point of $\mu_p$. Then 
  \begin{equation*}
    \#\scrB(p,a,\lambda)\le c p^{\epsilon}. 
  \end{equation*}
\end{proposition}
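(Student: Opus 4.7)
The plan is to chain together the two main inputs already available: Lemma~\ref{lem:boundB} reduces the cardinality of $\scrB$ to a count of $p$-th roots of unity at which $P_a$ is small, and Theorem~\ref{thm:countrootsof1} bounds exactly such a count.

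First, apply Lemma~\ref{lem:boundB} with parameter $\lambda$ (to be chosen) to obtain
\begin{equation*}
\#\scrB(p,a,\lambda) \le 12n\, \#\left\{ \zeta \in \mu_p : |P_a(\zeta)| < 5n|a|\,p^{-\lambda} \right\}.
\end{equation*}
Fix a primitive $p$-th root of unity $\xi$, and for $t\in\IF_p$ write $\zeta = \xi^t$. Setting $\zeta_j = \xi^{a_j}\in\mu_p$ for $j=1,\dots,n$, we have
\begin{equation*}
P_a(\xi^t) = 1 + \xi^{t a_1} + \cdots + \xi^{t a_n} = 1 + \zeta_1^t + \cdots + \zeta_n^t,
\end{equation*}
so that counting $\zeta\in\mu_p$ with $P_a(\zeta)$ small is the same as counting $t\in\IF_p$ with $|1+\zeta_1^t+\cdots+\zeta_n^t|$ small.

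Next, check the non-vanishing hypothesis of Theorem~\ref{thm:countrootsof1}. Taking $t=1$ gives $1+\zeta_1+\cdots+\zeta_n = P_a(\xi)$, which is non-zero by assumption since $\xi\in\mu_p$ and $P_a$ has no zero on $\mu_p$. Let $c_0 = c_0(n,\epsilon)$ and $\lambda_0 = \lambda_0(n,\epsilon)$ be the constants furnished by Theorem~\ref{thm:countrootsof1} applied with $n$ and $\epsilon$.

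Now choose $\lambda$ so that the threshold in Lemma~\ref{lem:boundB} is dominated by the threshold in Theorem~\ref{thm:countrootsof1}. Using $|a|\le p$ we have $5n|a|p^{-\lambda} \le 5n\,p^{-(\lambda-1)}$, and we need
\begin{equation*}
5n\,p^{-(\lambda-1)} \le c_0^{-1} p^{-\lambda_0}.
\end{equation*}
For $p$ bounded below by a constant depending only on $n$ and $\epsilon$, this follows by taking $\lambda = \lambda_0 + 2 + \lceil \log_2(5nc_0) \rceil$, say. Hence Theorem~\ref{thm:countrootsof1} yields at most $c_0 p^{\epsilon}$ values of $t$, and combining with Lemma~\ref{lem:boundB} gives $\#\scrB(p,a,\lambda) \le 12n\,c_0\,p^{\epsilon}$. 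For the finitely many small primes excluded above, the trivial bound $\#\scrB \le d \le 2|a|\le 2p$ is absorbed by enlarging $c$. This yields the proposition with $c = c(n,\epsilon)$ sufficiently large.

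There is essentially no obstacle beyond bookkeeping: the non-trivial content has been pushed into Theorem~\ref{thm:countrootsof1} (whose proof uses the o-minimal counting theorem from Section~\ref{sec:approxdefinable}) and into the root-counting Lemma~\ref{lem:boundB}. The only choice requiring care is ensuring $\lambda$ is large enough in terms of $n$ and $\epsilon$ to swallow the factor $5n|a|\le 5np$ appearing in the threshold of Lemma~\ref{lem:boundB}, which is a matter of adding a constant to $\lambda_0$.
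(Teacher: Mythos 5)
Your proposal is correct and takes essentially the same route as the paper's proof: reduce via Lemma~\ref{lem:boundB}, rewrite $P_a(\xi^t)$ as $1+\zeta_1^t+\cdots+\zeta_n^t$, use the hypothesis to verify non-vanishing at $t=1$, and absorb the factor $5n|a|\le 5np$ into the threshold of Theorem~\ref{thm:countrootsof1} by enlarging $\lambda$. One small remark: your choice $\lambda=\lambda_0+2+\lceil\log_2(5nc_0)\rceil$ already gives $p^{\lambda-1-\lambda_0}\ge 2^{\lambda-1-\lambda_0}\ge 5nc_0$ for every prime $p\ge 2$, so the fallback for ``finitely many small primes'' is unnecessary (and indeed the paper avoids it by making the analogous choice work uniformly).
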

\begin{proof}
Say $c'$ and $\lambda'$ are from Theorem \ref{thm:countrootsof1}.
We apply Lemma \ref{lem:boundB} to a fixed  $\lambda\ge \lambda'+1 \ge
2$ that satisfies 
$2^{\lambda-\lambda'-1}\ge 5nc'$ and hence 
$p^{\lambda}\ge 5nc' p^{\lambda'+1}$.  

Say $\xi=e^{2\pi \sqrt{-1}/p}$  and $a=(a_1,\ldots,a_n)$. 
Now any $\zeta\in\mu_p$ with 
$|P_a(\zeta)|<5n|a|p^{-\lambda}$ equals $\xi^t$ for some $t\in\IF_p$
and
\begin{equation*}
  \left| 1+\xi^{a_1 t} + \cdots + \xi^{a_n t}\right| <
  5n|a|p^{-\lambda} \le {c'}^{-1} \frac{|a|}{p} p^{-\lambda'}
\le {c'}^{-1} p^{-\lambda'},
\end{equation*}
as $|a|\le p$. 
Recall that $1+\xi^{a_1}+ \cdots + \xi^{a_n}=P_a(\xi)\not=0$ for 
  $a$ as in the hypothesis. So by Theorem \ref{thm:countrootsof1} the
  number of possible $t$  is at most
$cp^\epsilon$ for $c$ sufficiently large in terms of $n$ and $\epsilon$.
\end{proof}

\section{Main Technical Result}

Suppose $n\ge 2$, let $\Omega\subset\IZ^n$ be a subgroup of rank at
least $2$, and let $a\in\IZ^n$. We set 
\begin{equation}
\label{def:rho}
  \rho(a;\Omega) = \inf \left\{|\omega| : \omega\in \Omega\ssm\{0\} 
\text{ and }\langle\omega,a\rangle = 0\right\}
\end{equation}
and recall (\ref{def:rhop}).

\begin{proposition}
\label{prop:main}
Let   $\Omega\subset\IZ^n$ be  a subgroup of rank $m\ge 2$
 that satisfies the hypothesis in Proposition
 \ref{prop:norootsonunitcircle}.
There exists a constant $c=C(\Omega)\ge 1$ with the following
property. 
Say $a\in \Omega$ and let $p$ be a prime with
$\rho_p(a;\Omega)\ge c$.
There exists $a'\in\Omega\ssm\{0\}$ with 
$a'=at-p\omega$, where $t\in\IZ$ is coprime to $p$ and $\omega\in\Omega$,
such that $|a'|\le c p^{1-1/m},\Delta_p(a')=\Delta_p(a)\not=0$, 
\begin{equation}
\label{eq:mainpropconclusion}
\rho(a';\Omega)\ge \rho_p(a;\Omega), \quad\text{and}\quad
 \frac{1}{p-1} \log\Delta_p(a') = \ma{P_{a'}}+
 O\left({p^{-\frac{1}{2m}}}\right);
\end{equation} 
here the constant implicit in $O(\cdot)$ depends only on $\Omega$.
\end{proposition}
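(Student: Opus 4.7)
The strategy is to chain together Proposition \ref{prop:norootsonunitcircle}, Lemma \ref{lem:deltasymmetry}, Proposition \ref{prop:boundscrB}, and Proposition \ref{prop:firstbound}, letting the constant $c=C(\Omega)$ absorb all the numerical conditions imposed below. First I would invoke Proposition \ref{prop:norootsonunitcircle} with its constant $c_0=c(\Omega)$; for $\rho_p(a;\Omega)\ge c_0$ this furnishes $t\in\IZ$ with $p\nmid t$ and $\omega\in\Omega$ such that $a'=ta-p\omega$ is non-zero, satisfies $|a'|\le c_0 p^{1-1/m}$, and $P_{a'}$ has no zero on $S^1\ssm\mu_\infty$.

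Two of the four assertions are then essentially formal. Lemma \ref{lem:deltasymmetry}(i) yields $\Delta_p(a')=\Delta_p(ta)$ since $a'\equiv ta\imod{p}$, and part (ii) then gives $\Delta_p(ta)=\Delta_p(a)$. For $\rho(a';\Omega)\ge\rho_p(a;\Omega)$, any $\omega^*\in\Omega\ssm\{0\}$ with $\langle\omega^*,a'\rangle=0$ satisfies $t\langle\omega^*,a\rangle=p\langle\omega^*,\omega\rangle\equiv 0\imod{p}$; since $p\nmid t$ this forces $\langle\omega^*,a\rangle\equiv 0\imod{p}$, hence $|\omega^*|\ge\rho_p(a;\Omega)$ by definition of $\rho_p$.

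The genuine difficulty is the verification that $P_{a'}$ has no zero in $\mu_p$, which is required to invoke both Proposition \ref{prop:firstbound} and Proposition \ref{prop:boundscrB}. The relation $\zeta^p=1$ on $\mu_p$ yields $P_{a'}(\zeta)=P_a(\zeta^t)$, so the case $\zeta=1$ is eliminated by $P_{a'}(1)=n+1\ne 0$. For $\zeta=\zeta_0^k$ with $\zeta_0$ primitive and $k\in\IF_p^\times$, a vanishing would exhibit a sum of $n+1$ (not necessarily distinct) $p$-th roots of unity equal to zero; primality of $p$ makes $1+X+\cdots+X^{p-1}$ the minimal polynomial of $\zeta_0$, which forces the multiplicity of every power of $\zeta_0$ in this sum to coincide, so $p\mid n+1$. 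This is impossible as soon as $p>n+1$. Since $\rho_p(a;\Omega)\le p\,v_\Omega$ where $v_\Omega$ is the minimum non-zero norm in $\Omega$, enlarging $c$ in terms of $\Omega$ simultaneously forces $p>n+1$ and $|a'|\le p$, so both obstructions vanish.

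With these hypotheses secured, I would apply Proposition \ref{prop:boundscrB} with a fixed $\epsilon>0$ and the associated $\lambda=\lambda(n,\epsilon)$ to obtain $\#\scrB(p,a',\lambda)\le Cp^{\epsilon}$. Feeding this together with $|a'|\le c_0 p^{1-1/m}$ into Proposition \ref{prop:firstbound} gives
\begin{equation*}
  \frac{1}{p-1}\log\Delta_p(a')-\ma{P_{a'}} = O\!\left(\frac{p^{1-1/m}\log(2p)}{p}\bigl(\lambda+p^{\epsilon}\bigr)\right) = O\!\left(p^{\epsilon-1/m}\log p\right),
\end{equation*}
where the implied constants depend only on $\Omega$ and $\epsilon$. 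Choosing $\epsilon=1/(3m)$ makes the exponent $-2/(3m)<-1/(2m)$, which absorbs the logarithm and delivers the required error $O(p^{-1/(2m)})$. The main obstacle throughout is the third step: the unlikely-intersections input of Proposition \ref{prop:norootsonunitcircle} excludes only \emph{non-torsion} points of $S^1$ from the zero set of $P_{a'}$, while exclusion of torsion of order exactly $p$ must come from a separate arithmetic argument using the primality of $p$.
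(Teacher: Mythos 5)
Your proposal follows the same overall route as the paper: invoke Proposition \ref{prop:norootsonunitcircle} to produce $a'$, use Lemma \ref{lem:deltasymmetry} for $\Delta_p(a')=\Delta_p(a)$, verify $\rho(a';\Omega)\ge\rho_p(a;\Omega)$ by passing an orthogonality relation mod $p$, exclude zeros of $P_{a'}$ in $\mu_p$, then chain Proposition \ref{prop:boundscrB} into Proposition \ref{prop:firstbound}. The one genuinely different step is how you rule out zeros in $\mu_p\ssm\{1\}$: the paper observes that $X^eP_{a'}$ is an integer polynomial of degree $\le 2|a'|<p-1=\deg(1+X+\cdots+X^{p-1})$, so the $p$-th cyclotomic polynomial cannot divide it; you instead interpret a hypothetical vanishing $P_{a'}(\zeta)=0$ as a vanishing integral combination of powers of a primitive $\zeta_0$ with coefficient sum $n+1$, and use that $1+X+\cdots+X^{p-1}$ is the minimal polynomial to force all multiplicities equal, whence $p\mid n+1$. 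Both are correct; yours needs only $p>n+1$ (independent of the $|a'|\le cp^{1-1/m}$ bound), while the paper's uses $|a'|<(p-1)/2$, which in turn relies on $m\ge 2$. Your choice $\epsilon=1/(3m)$ in Proposition \ref{prop:boundscrB} is also slightly cleaner than the text's $\epsilon=1/(2m)$, since it leaves a strict margin $p^{-2/(3m)}\log p = O(p^{-1/(2m)})$ to absorb the $\log p$; with $\epsilon=1/(2m)$ the displayed chain of inequalities in the paper strictly speaking leaves a stray $\log p$ that needs the same small adjustment.
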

\begin{proof}
Let $\omega_1\in \Omega\ssm\{0\}$, then $\langle p\omega_1,  a\rangle
\equiv 0 \imod p$, hence
$\rho_p(a;\Omega)\le p |\omega_1|$. 
By increasing $c$ we may assume that $p$ is larger than a prescribed
constant. 
 Say $c_1\ge 1$ is the constant from
   Proposition \ref{prop:norootsonunitcircle}; we may suppose $c\ge
   \max\{3,c_1\}$. 
There
 is $t\in\IZ$ not
  divisible by $p$ and $\omega\in \Omega$ such that $a'=ta-p\omega$
  satisfies $0<|a'|\le c_1 p^{1-1/m}$
 and the only roots of $P_{a'}$ on the unit circle are roots of
unity. We may assume $|a'|<(p-1)/2$ by increasing $c$.

Suppose $\langle\omega_0,a'\rangle=0$ with
$\omega_0\in\Omega\ssm\{0\}$. Then
 $\langle\omega_0,a\rangle\equiv 0 \imod p$ as $t$ and $p$ are
coprime. So 
$|\omega_0|\ge \rho_p(a';\Omega)$ by hypothesis. This implies 
the inequality in (\ref{eq:mainpropconclusion}).  

Let $e\in\IZ$ such that $X^eP_{a'}$ is a polynomial with non-zero
constant part, then $\deg(X^e P_{a'})\le 2|a'|<p-1$. Since $X^eP_{a'}$
has integral coefficients we find $P_{a'}(\zeta)\not=0$ if $\zeta$ has
order $p$. Clearly, we also have $P_{a'}(1)=n+1\not=0$. 
 So $P_{a'}$ does not vanish at any point of 
 $\mu_p$.

We apply Proposition \ref{prop:boundscrB} to $\epsilon =1/(2m), a'$, and $p$ to conclude 
$\#\scrB(p,a',\lambda)\le c_2p^{1/(2m)}$ for constants $c_2,\lambda\ge 1$ that
depend only on $m$ and $n$.

We use this bound in the estimate from Proposition
\ref{prop:firstbound} applied to $a'$, to get
$\Delta_p(a')\not=0$ and
\begin{equation*}
  \left|\frac{1}{p-1} \log\Delta_p(a') - \ma{P_{a'}} \right| 
\le c_3 \frac{|a'|\log p}{p} \left(\lambda + c_2p^{\frac{1}{2m}}\right)
\le c_4 p^{-\frac{1}{2m}}
\end{equation*}
where $c_3$ and $c_4$ depend only on $m,n,$ and $\epsilon$.

Now recall that $a' = ta-p\omega$, so 
$\Delta_p(a')=\Delta_p(ta) = \Delta_p(a)\not=0$, by Lemma \ref{lem:deltasymmetry}, parts
(i) and (ii), respectively.
This completes the proof. 
\end{proof}

Let $A\in \mat{mn}{\IZ}$ be a matrix with entries $a_{ij}$. We define
\begin{equation}
\label{def:PA}
  P_A = 1+\sum_{j=1}^n X_1^{a_{1j}} X_2^{a_{2j}}\cdots X_m^{a_{mj}} 
\in\IZ[X_1^{\pm 1},\ldots,X_m^{\pm 1}].
\end{equation}
If we consider $a\in\IZ^n$ as a $1\times n$ matrix and identify $X_1$
with $X$, then the definitions (\ref{def:Pa}) and (\ref{def:PA})
coincide. 

Now suppose that $\Omega\subset\IZ^n$ is a subgroup of rank $m\ge 1$
and assume that  the rows of $A$ are a basis of $\Omega$. Then the
value $\ma{P_A}$ is independent of the choice of basis. Indeed, if the
rows of $B\in\mat{mn}{\IZ}$ constitute another basis of $\Omega$, then 
$A=UB$ with $U\in \gl{m}{\IZ}$. The  Mahler
measure  is known to be invariant under a change of coordinates by
$U$, i.e. $\ma{P_A} = \ma{P_{UB}}=\ma{P_B}$, cf. Corollary 8 in Chapter
3.4 \cite{Schinzel}. 
So $\ma{P_A}$ depends only on $\Omega$ and we write
\begin{equation}
  \label{def:omega}
 \ma{\Omega} = \ma{P_A}
\end{equation}
for any $A$ as before. 

\begin{theorem}
\label{thm:main}
Let   $\Omega\subset\IZ^n$ be a subgroup of rank $m\ge 2$
 that satisfies the hypothesis in Proposition
 \ref{prop:norootsonunitcircle}
and say $\epsilon > 0$. 
Suppose $a\in \Omega$ and $p$ is a prime such that
 $\rho_p(a;\Omega)$ is sufficiently large in terms of $\Omega$. Then
 $\Delta_{p}(a)\not=0$ and
\begin{equation}
  \label{eq:thmmainlimit}
  \frac{1}{p-1} \log \Delta_{p}(a) = 
\ma{\Omega} + O\left(\rho_{p}(a;\Omega)^{-\frac{1}{4n}+\epsilon}\right)
\end{equation}
as $\rho_{p}(a;\Omega)\rightarrow\infty$
where the implicit constant depends only on $\Omega$ and $\epsilon$. 
\end{theorem}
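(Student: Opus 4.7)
The plan is to combine Proposition~\ref{prop:main}, which reduces the problem for $\Delta_p(a)$ to bounding $\ma{P_{a'}}$ for a suitable replacement $a' \in \Omega$, with the quantitative Lawton theorem from the appendix (Theorem~\ref{thm:lawtonquant}), which compares $\ma{P_{a'}}$ to $\ma{\Omega}$.

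First I would apply Proposition~\ref{prop:main}. Assuming $\rho_p(a;\Omega)$ exceeds its threshold $c(\Omega)$, it supplies $a' = ta - p\omega \in \Omega\ssm\{0\}$ with $t$ coprime to $p$, $|a'|\le c p^{1-1/m}$, $\Delta_p(a') = \Delta_p(a) \neq 0$, $\rho(a';\Omega)\ge \rho_p(a;\Omega)$, and
\[
\frac{1}{p-1}\log \Delta_p(a') = \ma{P_{a'}} + O\bigl(p^{-1/(2m)}\bigr).
\]
The elementary observation $\rho_p(a;\Omega)\le p|\omega_0|$ for any fixed $\omega_0 \in \Omega\ssm\{0\}$ (since $\langle p\omega_0,a\rangle \equiv 0 \imod{p}$) gives $p^{-1/(2m)} \ll \rho_p(a;\Omega)^{-1/(2m)}$ with an implicit constant depending only on $\Omega$; because $m\le n$, this error is dominated by $\rho_p(a;\Omega)^{-1/(4n)+\epsilon}$ once $\rho_p(a;\Omega)$ is large.

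The remaining and principal task is to prove
\[
\bigl|\ma{P_{a'}} - \ma{\Omega}\bigr| = O\bigl(\rho(a';\Omega)^{-1/(4n)+\epsilon}\bigr).
\]
Fixing a basis $\omega_1,\ldots,\omega_m$ of $\Omega$ arranged as the rows of a matrix $A$ and writing $a' = t_1\omega_1 + \cdots + t_m\omega_m$ with $t=(t_1,\ldots,t_m)\in\IZ^m$, the definitions (\ref{def:Pa}) and (\ref{def:PA}) immediately yield $P_{a'}(X) = P_A(X^{t_1},\ldots,X^{t_m})$, so that $P_{a'}$ is the univariate specialization of $P_A$ along the monomial curve $(X^{t_1},\ldots,X^{t_m})$. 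Lawton's theorem asserts that the Mahler measure of such a specialization tends to $\ma{P_A} = \ma{\Omega}$ as $t$ becomes Lawton-generic, i.e.\ as $q(t) = \min\{|b| : b \in \IZ^m\ssm\{0\},\ b\cdot t = 0\}\to\infty$, and Theorem~\ref{thm:lawtonquant} makes this quantitative with an explicit rate in $q(t)$. A short linear-algebra computation relates $q(t)$ to $\rho(a';\Omega)$: for $b\in\IZ^m\ssm\{0\}$ set $\omega = \sum b_i\omega_i \in \Omega\ssm\{0\}$, then $\langle \omega,a'\rangle = b^T G t$ where $G$ is the (invertible, integer) Gram matrix of $\omega_1,\ldots,\omega_m$, and the bijection $b\mapsto G^T b$ shows that $q(t)$ and $\rho(a';\Omega)$ are comparable up to multiplicative constants depending only on $\Omega$.

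Combining the two bounds and using $\rho(a';\Omega)\ge \rho_p(a;\Omega)$ yields (\ref{eq:thmmainlimit}). The main obstacle lies entirely in the appendix: making Lawton's classical convergence theorem quantitative with an exponent strong enough to produce $1/(4n)$ in (\ref{eq:thmmainlimit}) requires a delicate estimate for how the local contributions to the Mahler measure of $P_A$ change when the $m$-torus of integration is replaced by the one-parameter subgroup $\{(z^{t_1},\ldots,z^{t_m}) : |z|=1\}$; the translation from Lawton genericity to $\rho(a';\Omega)$ and the bookkeeping above are comparatively routine.
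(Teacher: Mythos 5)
Your proposal is correct and follows essentially the same route as the paper: reduce via Proposition~\ref{prop:main}, write $P_{a'} = P_A(X^{t_1},\ldots,X^{t_m})$, invoke the quantitative Lawton theorem, and transfer the genericity parameter $\rho(t;\IZ^m)$ to $\rho(a';\Omega)\ge\rho_p(a;\Omega)$ via the Gram matrix (the paper instead fixes a dual system $\omega^*_1,\ldots,\omega^*_m$ with $\langle\omega^*_j,\omega_l\rangle = d\,\delta_{jl}$, which is the same linear-algebra fact). One small caveat: $b\mapsto Gb$ is not a bijection of $\IZ^m$ when $\det G\ne\pm1$, so the comparability of $\rho(t;\IZ^m)$ and $\rho(a';\Omega)$ needs the observation that $(\det G)G^{-1}$ is integral; and to land on the stated exponent $\tfrac{1}{4n}$ you must record that $P_A$ has at most $n+1$ non-zero terms, so that the $\tfrac{1}{4(k-1)}$ from Theorem~\ref{thm:lawtonquant} is at least $\tfrac{1}{4n}$.
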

\begin{proof}
If $\rho_p(a;\Omega)$ is  sufficiently large, then by
 Proposition \ref{prop:main} we have $\Delta_p(a)\not=0$ and 
 $a'\in \Omega$ with the stated properties. 

Let us fix a basis $\omega_1,\ldots,\omega_m$ of $\Omega$ and write
 $A\in\mat{mn}{\IZ}$ for the matrix with   rows $\omega_1,\ldots,\omega_m$.
  We fix a tuple of independent elements 
  $(\omega^*_1,\ldots,\omega^*_m)$ in $\Omega$ and an integer $d\ge 1$ with
$\langle \omega^*_j,\omega_l\rangle = 0$ if $j\not=l$ and 
$\langle \omega^*_j,\omega_j\rangle =d$ for all $1\le j,l\le m$. 

It remains to check that  $\ma{P_{a'}}$ converges to
$\ma{P_A} = \ma{\Omega}$. 
Observe
\begin{equation*}
  P_{a'} = P_A(X^{\nu_1},X^{\nu_2},\ldots,X^{\nu_m})
\end{equation*}
where $\nu=(\nu_1,\ldots,\nu_m)\in\IZ^m$ is  determined by
$a' = \nu_1 \omega_1 + \cdots + \nu_m\omega_m$. 

Our quantitative version of Lawton's Theorem,  Theorem
\ref{thm:lawtonquant} in the self-contained appendix, 
relies on
$\rho(\nu;\IZ^m)$.
Say $(\lambda_1,\ldots,\lambda_m)\in\IZ^m\ssm\{0\}$
has norm $\rho(\nu;\IZ^n)$
with $\sum_{j=1}^m \lambda_j\nu_j=0$.
Then $\langle \sum_{j=1}^m\lambda_j\omega^{*}_j,a'\rangle = 0$ and
hence
\begin{equation*}
\rho_p(a;\Omega)\le\rho(a';\Omega)\le \left|\sum_{j=1}^m
\lambda_j\omega^{*}_j\right|\le \rho(\nu;\IZ^m) (|\omega^{*}_1|+\cdots+|\omega^{*}_m|)
\end{equation*}
 where we used (\ref{eq:mainpropconclusion}).
The theorem follows as $P_A$ has at most $n+1$  and at
least $m+1\ge 2$ non-zero terms. 
\end{proof}

\section{Applications to Gaussian Periods}

Using our method we  prove the following theorem from which
we will deduce two applications. 
Suppose $n\ge 2$. Recall that $\ma{\Omega}$ was defined in  (\ref{def:omega}). 

A place of a number field is an absolute value on the said number
field whose restriction to $\IQ$ coincides with  the standard complex absolute
value or the $p$-adic absolute value for a prime $p$ taking the value
$p^{-1}$ at $p$. 

\begin{theorem}
\label{thm:mainapp}
Let $K$ be a number field, let $\alpha_1,\ldots,\alpha_n\in K$, and
define the subgroup
  \begin{equation*}
    \Omega = \left\{ (b_1,\ldots,b_n) \in\IZ^n :  b_1\alpha_1+\cdots+b_n\alpha_n =
    0 \right\}^{\perp}
  \end{equation*}
of rank $m$. Let $\epsilon > 0$.
We  suppose $m\ge 2$ and 
that the following hypothesis holds.
The $0=\alpha_0,\alpha_1,\ldots,\alpha_n$ are pairwise
    distinct
and $(\alpha_i-\alpha_j)/(\alpha_k-\alpha_l)\not\in\IQ$
for all pairwise distinct $i,j,k,l\in \{0,\ldots,n\}$. 
Let $p$ be a prime, $v_0$  a place of $K$ 
that extends the $p$-adic absolute value,
and $e(v_0)$ the ramification index of $K/\IQ$ at $v_0$. 
Suppose
$(a_1,\ldots,a_n) \in\IZ^n$ with $|a_i-\alpha_i|_{v_0}<1$ for all
$i\in \{1,\ldots,n\}$, then
\begin{equation}
\label{eq:mainapp}
  \frac{1}{p-1} \sum_{t=1}^{p-1} \log \left|1 + \zeta^{t a_1} + \cdots +
  \zeta^{t a_n}\right| = \ma{\Omega} + 
O\left(p^{-\frac{1}{4n[K:\IQ]e(v_0)}+\epsilon}\right) \quad\text{where}\quad
\zeta = e^{2\pi\sqrt{-1}/p}
\end{equation}
as $p\rightarrow \infty$ and the implicit constant depends only on $\alpha_1,\ldots,\alpha_n,$
and $\epsilon$; in particular, the  logarithm
is well-defined for all large $p$. 
\end{theorem}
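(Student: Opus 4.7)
The plan is to reduce to Theorem~\ref{thm:main} by first transferring $a$ into $\Omega$ modulo $p$, verifying the combinatorial hypothesis on $\Omega$, and then proving a polynomial-in-$p$ lower bound for $\rho_p(a;\Omega)$.

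Let $\Lambda = \{b \in \IZ^n : b_1\alpha_1 + \cdots + b_n\alpha_n = 0\}$, so that $\Omega = \Lambda^{\perp}$. Since $K$ has characteristic zero, $\Lambda$ is saturated in $\IZ^n$; consequently $\Lambda^{\perp\perp} = \Lambda$, and the condition $\Omega \subset (v\IZ)^{\perp}$ is equivalent to $v \in \Lambda$. For $v = \alpha(e_i-e_j)+\beta(e_k-e_l)$ with $(\alpha,\beta) \in \IZ^2\ssm\{0\}$ and $i,j,k,l \in \{0,\ldots,n\}$ pairwise distinct, $v \in \Lambda$ reads $\alpha(\alpha_i-\alpha_j)+\beta(\alpha_k-\alpha_l)=0$ (with $\alpha_0=0$), which would force $(\alpha_i-\alpha_j)/(\alpha_k-\alpha_l) \in \IQ$, contradicting the hypothesis. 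Hence Proposition~\ref{prop:norootsonunitcircle} applies to $\Omega$. Next, for any $b \in \Lambda$, reducing $\sum b_i\alpha_i=0$ modulo $v_0$ (using $a_i\equiv\alpha_i$ in the residue field) gives $\sum b_i a_i \equiv 0 \imod p$. The images of $\Omega$ and $\Lambda$ modulo $p$ in $\IF_p^n$ are $m$- and $(n-m)$-dimensional subspaces (both groups being saturated); the former lies in the $\IF_p$-orthogonal of the latter, also $m$-dimensional, so the two coincide. Therefore $a \imod p$ is the reduction of some $a' \in \Omega$, and Lemma~\ref{lem:deltasymmetry}(i) gives $\Delta_p(a) = \Delta_p(a')$, while $\rho_p(a';\Omega) = \rho_p(a;\Omega)$ since $a-a' \in p\IZ^n$.

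For the lower bound, suppose $\omega \in \Omega\ssm\{0\}$ satisfies $p \mid N := \langle\omega,a\rangle$, and set $\beta = \sum_i\omega_i\alpha_i \in K$. Then $\beta \neq 0$ because $\Lambda \cap \Lambda^{\perp} = \{0\}$ for the standard inner product. The hypothesis $|a_i-\alpha_i|_{v_0}<1$ together with the discreteness of the value group gives $|a_i-\alpha_i|_{v_0} \le p^{-1/e(v_0)}$, so the ultrametric inequality yields $|N-\beta|_{v_0} \le p^{-1/e(v_0)}$ and, combined with $|N|_{v_0} \le p^{-1}$, also $|\beta|_{v_0} \le p^{-1/e(v_0)}$. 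The product-formula Liouville bound $\log|\beta|_{v_0} \ge -[K:\IQ]\,h(\beta)/[K_{v_0}:\IQ_p]$, combined with the elementary height estimate $h(\beta) \le \log|\omega|_\infty + C_1$ (where $C_1 = C_1(n,\alpha_1,\ldots,\alpha_n)$), yields
\[
  \log|\omega|_\infty \ge \frac{[K_{v_0}:\IQ_p]}{[K:\IQ]\,e(v_0)}\log p - C_1 \ge \frac{\log p}{[K:\IQ]} - C_1,
\]
so $\rho_p(a;\Omega) \ge c_2\,p^{1/[K:\IQ]}$ for all sufficiently large primes $p$, where $c_2 = c_2(\alpha_1,\ldots,\alpha_n)$.

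Since $\rho_p(a';\Omega) \to \infty$ with $p$, Theorem~\ref{thm:main} applied to $a' \in \Omega$ gives $\Delta_p(a') \neq 0$ and
\[
  \frac{1}{p-1}\log\Delta_p(a') = \ma{\Omega} + O\!\left(\rho_p(a';\Omega)^{-1/(4n)+\epsilon'}\right) = \ma{\Omega} + O\!\left(p^{-1/(4n[K:\IQ])+\epsilon}\right),
\]
which, since $e(v_0) \ge 1$, is stronger than the claimed bound $O(p^{-1/(4n[K:\IQ]\,e(v_0))+\epsilon})$. The left-hand side of \eqref{eq:mainapp} equals $(p-1)^{-1}\log\Delta_p(a) = (p-1)^{-1}\log\Delta_p(a')$ by definition~\eqref{eq:defDelta}, finishing the argument. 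The main obstacle is the lower bound on $\rho_p(a;\Omega)$: establishing polynomial growth in $p$ requires careful handling of the normalizations of $v$-adic absolute values in the product formula, and the structural identity $\Lambda \cap \Lambda^{\perp} = \{0\}$ is essential for the non-vanishing of $\beta$, which in turn makes the Liouville step effective.
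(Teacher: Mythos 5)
Your proposal is correct and follows essentially the same route as the paper: reduce $a$ into $\Omega$ modulo $p$ using the $v_0$-adic hypothesis and primitivity of $\Lambda$ and $\Omega$, verify the combinatorial condition on $\Omega$ from the hypothesis on the $\alpha_i$, derive a polynomial-in-$p$ lower bound for $\rho_p(a;\Omega)$ via the product formula, and then apply Theorem~\ref{thm:main}. Your packaging of the $\rho_p$ lower bound through the Liouville inequality $\log|\beta|_{v_0}\ge -[K:\IQ]\,h(\beta)/[K_{v_0}:\IQ_p]$ tracks the residue degree $f(v_0)$ more precisely than the paper's direct product-formula estimate, which incidentally yields the slightly stronger exponent $p^{1/[K:\IQ]}$ in place of $p^{1/([K:\IQ]e(v_0))}$, but this is only a cosmetic sharpening of the same argument.
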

\begin{proof}
Observe that $\Lambda = \{(b_1,\ldots,b_n) \in\IZ^n :
b_1\alpha_1+\cdots + b_n\alpha_n=0\}$ is a primitive subgroup of
$\IZ^n$ of rank $r=n-m$.
If $r\ge 1$ and if $M \in \mat{rn}{\IZ}$ is a matrix whose rows are a basis of
$\Lambda$, then
\begin{equation*}
  0 \rightarrow p\Omega \xrightarrow{\text{inclusion}} p\IZ^n
  \xrightarrow{\text{multiplication by $M$}}  p\IZ^r \rightarrow 0
\end{equation*}
is a short exact sequence. By  the Snake Lemma we
find that the image of $\Omega$ in $\IF_p^n$ equals the kernel of
multiplication by $M$ taken as an endomorphism $\IF_p^n\rightarrow
\IF_p^r$.

Say $a=(a_1,\ldots,a_n)\in\IZ^n$ is as in the hypothesis. Then the left-hand side of (\ref{eq:mainapp}) equals
$\frac{1}{p-1}\log \Delta_p(a)$.
This mean is invariant under translating $a$ by a vector in $p\IZ^n$, cf.
 Lemma \ref{lem:deltasymmetry}(i). 
For all
$(b_1,\ldots,b_n)\in \Lambda$ we find
\begin{equation*}
|  a_1 b_1+\cdots + a_nb_n|_{v_0} = 
|  (a_1-\alpha_1) b_1+\cdots + (a_n-\alpha_n) b_n|_{v_0} 
\le \max_{1\le i\le n} |a_i-\alpha_i|_{v_0} <1. 
\end{equation*}
By the previous paragraph we may assume, after adding an element of
$p\IZ^n$ to $a$, i.e. without loss of generality, 
that $a\in\Omega$.   The current theorem  will follow from
 Theorem \ref{thm:main}.

As $\Lambda^{\perp\perp}=\Lambda$, we find that the hypothesis on
$\Omega$ implies the hypothesis on $\Omega$ 
 in Proposition \ref{prop:norootsonunitcircle}.

To estimate $\rho_p(a;\Omega)$  say
$\omega=(\omega_1,\ldots,\omega_n)\in\Omega\ssm\{0\}$ with
$\langle\omega,a\rangle\equiv 0 \imod p$ and $|\omega|
=\rho_p(a;\Omega)$.  
We define $\Delta = \omega_1 \alpha_1+\cdots + \omega_n \alpha_n
\in K$. Observe that $\Delta \not=0$ since
$\Lambda\cap\Lambda^{\perp}=\{0\}$. 
We estimate
\begin{equation*}
  |\Delta|_{v_0}  = |\omega_1 ( \alpha_1-a_1)+\cdots
+\omega_n(\alpha_n - a_n) + \omega_1 a_1 + \cdots + \omega_n a_n|_{v_0}
\le p^{-1/e(v_0)}.
\end{equation*}

If $v$ is an archimedean place of $K$, then the triangle inequality
yields $|\Delta|_v \le n |\omega|
\max\{|\alpha_1|_v,\ldots,|\alpha_n|_v\}$.
For any non-archimedean place $v$ of $K$ we get a stronger bound due
to the ultrametric triangle inequality, i.e.
$|\Delta|_v \le 
\max\{|\alpha_1|_v,\ldots,|\alpha_n|_v\}$.

We  take the product of $|\Delta|_v\not=0$ over all places with the appropriate
multiplicities and use the local bounds above in combination with the
product formula  to obtain
\begin{equation*}
  1\le \left(n|\omega|e^{h(\alpha_1)+\cdots+h(\alpha_n)}\right)^{[K:\IQ]} p^{-1/e(v_0)}.
\end{equation*}
So (\ref{eq:mainapp}) follows from (\ref{eq:thmmainlimit})
as $|\omega| = \rho_p(a;\Omega)$.
\end{proof}

 The following corollary generalizes Theorem \ref{thm:sprime} to
subgroups of  $\IF_p^\times$ of odd order. 

\begin{corollary}
\label{cor:myerson}
  Let $f\ge 3$ be an odd integer and $\epsilon > 0$. Say $\xi$ is a root of unity of order $f$ and define
  \begin{equation*}
    \Omega = \left\{ (b_1,\ldots,b_{f-1})\in\IZ^{f-1} : b_1 (\xi-1) + b_2
    (\xi^2-1) + \cdots + b_{f-1}(\xi^{f-1}-1)=0\right\}^{\perp}.
  \end{equation*}
For any prime $p$ 
  with $p\equiv 1 \imod {f}$ let  $G_p\subset\IF_p^\times$ be the subgroup
  of order $f$. 
Then
  \begin{equation*}
    \frac{1}{p-1} \sum_{t=1}^{p-1} \log\left| \sum_{g\in G_p} \zeta^{tg}
    \right| = \ma{\Omega} + O\left(p^{-\frac{1}{4(f-1)\varphi(f)}+\epsilon}\right)
\quad\text{where}\quad \zeta= e^{2\pi\sqrt{-1}/p}
  \end{equation*} 
as $p\rightarrow\infty$ where the implicit constant depends only on
$f$ and $\epsilon$; in particular, the logarithm is well-defined
for all large $p$. 
\end{corollary}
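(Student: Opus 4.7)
The plan is to apply Theorem \ref{thm:mainapp} to the cyclotomic field $K=\IQ(\xi)$ with $n=f-1$, $\alpha_i=\xi^i-1$ for $i=1,\ldots,n$, and a place $v_0$ of $K$ above $p$. With $[K:\IQ]=\varphi(f)$ and $e(v_0)=1$, the error exponent $-1/(4n[K:\IQ]e(v_0))+\epsilon$ from that theorem specializes to the desired $-1/(4(f-1)\varphi(f))+\epsilon$, and the subgroup it produces coincides with the $\Omega$ in the statement.

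Since $p\equiv 1\imod f$, the prime $p$ splits completely in $K/\IQ$; I would fix any prime $\mathfrak p$ of $K$ above $p$, let $v_0$ be the associated place (so $e(v_0)=1$), and write $\omega_p\in\IF_p^\times$ for the image of $\xi$ in the residue field. Then $\omega_p$ is of order $f$ and hence generates $G_p$. Choose an integer lift $\tilde\omega\in\IZ$ of $\omega_p$ and set $a_i=\tilde\omega^i-1$. Because $\tilde\omega\equiv\xi\imod{\mathfrak p}$ we have $|a_i-\alpha_i|_{v_0}<1$, which is the input format required by Theorem \ref{thm:mainapp}. Enumerating $G_p=\{\tilde\omega^i\bmod p:0\le i\le f-1\}$ and factoring out $\zeta^t$ yields
\[
\sum_{g\in G_p}\zeta^{tg}=\zeta^t\left(1+\zeta^{ta_1}+\cdots+\zeta^{ta_{f-1}}\right),
\]
so the moduli agree and the corollary's left-hand side equals $(p-1)^{-1}\log\Delta_p(a)$ in the notation of the theorem.

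To apply Theorem \ref{thm:mainapp} I then verify its hypotheses on $\Omega$. The rank of $\Omega$ equals $\dim_\IQ\IQ\langle\alpha_1,\ldots,\alpha_n\rangle$; from $\sum_{i=0}^{f-1}\xi^i=0$ we get $1=-\tfrac{1}{f}\sum_{i=1}^{n}\alpha_i$ in that span, and combined with $\xi^i=\alpha_i+1$ the span equals all of $K$. Hence $m=\varphi(f)\ge 2$. Distinctness of $0,\alpha_1,\ldots,\alpha_n$ is immediate from $\xi$ having exact order $f$.

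The main obstacle is the irrationality condition $(\xi^i-\xi^j)/(\xi^k-\xi^l)\notin\IQ$ for pairwise distinct $i,j,k,l\in\{0,\ldots,f-1\}$ (with $\xi^0=1$); this is where the parity hypothesis on $f$ enters. For $f=3$ the condition is vacuous since only three indices are available. For odd $f\ge 5$ I would argue in two stages: first, applying $\sigma_{-1}:\xi\mapsto\xi^{-1}$ to a supposed identity $\xi^i-\xi^j=q(\xi^k-\xi^l)$ with $q\in\IQ^\times$ and using $\xi^{-a}-\xi^{-b}=-\xi^{-(a+b)}(\xi^a-\xi^b)$ forces $\xi^{i+j}=\xi^{k+l}$, i.e.\ $i+j\equiv k+l\imod f$; second, since $f$ being odd makes $2$ invertible modulo $f$, the identity $\xi^a-\xi^b=(-1)^{a-b}\,\xi^{(a+b)(f+1)/2}\cdot 2\sqrt{-1}\,\sin(\pi(a-b)/f)$ makes the common exponential factor cancel in the ratio and reduces the condition to $\sin(\pi(i-j)/f)/\sin(\pi(k-l)/f)\in\IQ$. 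A classical Galois-conjugation argument on $(\xi^a-1)(\xi^{-a}-1)/[(\xi^b-1)(\xi^{-b}-1)]\in\IQ(\xi)$ then gives $i-j\equiv\pm(k-l)\imod f$, and combined with $i+j\equiv k+l\imod f$ this forces two of the four indices to coincide, a contradiction. With the hypotheses verified, Theorem \ref{thm:mainapp} directly yields the claimed asymptotic.
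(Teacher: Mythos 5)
Your proof takes the same global route as the paper: apply Theorem \ref{thm:mainapp} to $K=\IQ(\xi)$ with $\alpha_i=\xi^i-1$ and a place $v_0$ above the split prime $p$, compute the rank $m=\varphi(f)$, choose $a_i=\tilde\omega^i-1$ from a lift of $\xi$ in the residue field, and match the Gaussian period to $(p-1)^{-1}\log\Delta_p(a)$ by factoring out $\zeta^t$. All of that matches the paper and the exponent bookkeeping $4n[K:\IQ]e(v_0)=4(f-1)\varphi(f)$ is right.

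Where you genuinely diverge is in verifying the irrationality hypothesis $(\xi^i-\xi^j)/(\xi^k-\xi^l)\notin\IQ$. The paper proves a general Lemma \ref{lem:rootofunity} for four arbitrary distinct roots of unity, relying on Mann's theorem to classify vanishing sums of roots of unity with no vanishing subsum, and then applies it; the oddness of $f$ is used only at the very end to rule out $\zeta_i=-\zeta_j$. You instead exploit that all four are powers of the same $\xi$: conjugating by $\sigma_{-1}$ forces $i+j\equiv k+l\imod f$, a trigonometric factorization reduces the rationality to $\sin(\pi(i-j)/f)/\sin(\pi(k-l)/f)\in\IQ$, and a cyclotomic-norm argument on $\theta_a=(\xi^a-1)(\xi^{-a}-1)$ then forces $i-j\equiv\pm(k-l)\imod f$; combining the two congruences and inverting $2\imod f$ gives a coincidence of indices. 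This avoids Mann's theorem, which is appealing, but is less general (it would not prove Lemma \ref{lem:rootofunity} itself), and the phrase ``a classical Galois-conjugation argument'' hides a genuine case analysis: taking $\IQ(\xi)/\IQ$-norms of $\theta_a/\theta_b=q\in\IQ$ one must split on whether $f/\gcd(a,f)$ and $f/\gcd(b,f)$ are prime powers, and in the mixed case one finds the constraint forces $|q|=3$, $\theta_b=1$, hence $6\mid f$, which is excluded precisely because $f$ is odd. That subcase is where your argument would fail if not addressed, so if you write this up you should spell it out; with it, the argument is correct.
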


We will prove this corollary further down.
Here we treat
  the hypothesis on $\Omega$  in Theorem \ref{thm:mainapp}
for roots of unity. 

\begin{lemma}
\label{lem:rootofunity}
  Let $\zeta_1,\zeta_2,\zeta_3,\zeta_4$ be pairwise distinct roots of unity.
If $(\zeta_1-\zeta_2)/(\zeta_3-\zeta_4)\in\IQ$, then 
there exist distinct $i,j\in \{1,2,3,4\}$ with
$\zeta_i=-\zeta_j$.
\end{lemma}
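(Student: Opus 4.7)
My plan is to combine complex conjugation with Mann's structure theorem for vanishing sums of roots of unity and a single Galois-theoretic identity. First, since $r := (\zeta_1-\zeta_2)/(\zeta_3-\zeta_4)\in\IQ\subset\IR$ and $\overline{\zeta_i}=\zeta_i^{-1}$ for each root of unity, equating $r=\bar r$ and simplifying yields the product identity $\zeta_1\zeta_2=\zeta_3\zeta_4$.

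Writing $r=p/q$ in lowest terms, clearing denominators produces the $\IZ$-linear vanishing relation $q\zeta_1-q\zeta_2-p\zeta_3+p\zeta_4=0$. I would argue by contradiction and assume no pair satisfies $\zeta_i=-\zeta_j$. Inspecting each of the six possible two-element sub-sums, any vanishing sub-sum forces some $\zeta_i=\pm\zeta_j$, contradicting either distinctness or the hypothesis. Hence the relation is irreducible, and Mann's theorem on vanishing sums of roots of unity (applied to length four) provides an integer $N=2^{a}3^{b}$ such that all pairwise ratios $\zeta_i\zeta_j^{-1}$ lie in $\mu_N$. Setting $\mu:=\zeta_1/\zeta_3$ and $\nu:=\zeta_2/\zeta_3$ in $\mu_N$, the product identity rewrites $\zeta_4=\mu\nu\zeta_3$, whence
\[
 r=\frac{\mu-\nu}{1-\mu\nu}.
\]

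Since $\gcd(5,N)=1$, the Galois automorphism $\sigma_5\in\mathrm{Gal}(\IQ(\zeta_N)/\IQ)$ sends $\mu\mapsto\mu^5$, $\nu\mapsto\nu^5$ while fixing the rational $r$. Equating the two expressions for $r$, clearing the common factors $\mu-\nu$ and $1-\mu\nu$ (both nonzero), and rearranging yields the factored identity
\[
 (\mu^{2}-1)(\nu^{2}-1)\bigl[(\mu^{2}+1)(\nu^{2}+1)+\mu\nu\bigr]=0.
\]
Under the contradiction hypothesis, $\mu,\nu\ne\pm 1$ (otherwise we would have a relation $\zeta_i=-\zeta_j$), so the auxiliary factor must vanish.

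The main obstacle is extracting the contradiction from this auxiliary equation. Setting $p:=\mu\nu$, the auxiliary equation gives $\mu^{2}+\nu^{2}=-(1+p+p^{2})$, and combining with $(\mu-\nu)^{2}=r^{2}(1-p)^{2}$ yields the rational quadratic
\[
 (r^{2}+1)p^{2}+(3-2r^{2})p+(r^{2}+1)=0.
\]
Hence $p$ has degree at most two over $\IQ$, so as a root of unity it lies in $\{\pm 1,\pm i,\pm\omega,\pm\omega^{2}\}$. A case-by-case substitution shows that rationality of $r$ is only compatible with $p=-1$ (giving $\mu\nu=-1$, hence $\zeta_4=-\zeta_3$) or with $p\in\{-\omega,-\omega^{2}\}$, in which case the auxiliary equation further forces $\mu+\nu=0$, i.e.\ $\mu=-\nu$, hence $\zeta_1=-\zeta_2$. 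The remaining cases give $r^{2}\in\{3/2,2/3\}$, contradicting $r\in\IQ$. Both surviving possibilities provide a pair $\zeta_i=-\zeta_j$, contradicting our assumption and completing the proof.
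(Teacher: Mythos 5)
Your proof is correct, and it takes a genuinely different route from the paper's. The paper writes the ratio as $x = \zeta\,(\xi-1)/(\eta-1)$ with $\xi=\zeta_1/\zeta_2$, $\eta=\zeta_3/\zeta_4$, computes the $K/\IQ$-norm of $x$ to get $x^{[K:\IQ]}=\no{\xi-1}{K/\IQ}/\no{\eta-1}{K/\IQ}$, and then splits into four cases according to whether $\xi$ and $\eta$ have prime-power order, using that $\zeta-1$ is a cyclotomic unit exactly when $\ordS(\zeta)$ is not a prime power; Mann's theorem enters only in the subcase where both norms are units and $x=1$. You instead begin with the observation that reality of $r$ forces $\zeta_1\zeta_2=\zeta_3\zeta_4$, apply Mann's theorem directly to the four-term $\IZ$-linear relation $q\zeta_1-q\zeta_2-p\zeta_3+p\zeta_4=0$ (your subsum analysis correctly shows it is non-degenerate under the contradiction hypothesis), and then pass to the ratios $\mu,\nu$. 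The key polynomial identity
\begin{equation*}
(\mu^5-\nu^5)(1-\mu\nu)-(\mu-\nu)(1-\mu^5\nu^5)=(\mu^2-1)(\nu^2-1)(\mu-\nu)(\mu\nu-1)\bigl[(\mu^2+1)(\nu^2+1)+\mu\nu\bigr]
\end{equation*}
is correct, and since Mann gives $\mu^6=\nu^6=1$ the left-hand side vanishes identically (so the appeal to $\sigma_5$ is technically redundant once Mann has been applied, since $\sigma_5$ is just complex conjugation on sixth roots of unity—but it is a clean way to package the computation). The resulting auxiliary equation together with $\mu-\nu=r(1-\mu\nu)$ does yield the rational quadratic $(r^2+1)p^2+(3-2r^2)p+(r^2+1)=0$ in $p=\mu\nu$, and your case analysis on the eight possible roots of unity of degree at most two is exhaustive and correct: $p\in\{-1,-\omega,-\omega^2\}$ each produce the forbidden relation $\zeta_i=-\zeta_j$, while the remaining values are incompatible with $r\in\IQ$. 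Your version avoids cyclotomic unit theory and the four-way prime-power case split, trading it for the verification of the displayed polynomial identity and a short algebraic case analysis; the paper's version is longer in its casework but uses only standard facts about $\Phi_{p^e}(1)=p$. Both proofs hinge on Mann's theorem for length-four vanishing sums.

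Two small points worth tightening in a final writeup: state Mann's conclusion precisely (for a non-degenerate length-four relation one gets $N\mid 6$, which is stronger than $N=2^a3^b$ and is what guarantees $\gcd(5,N)=1$ automatically), and verify the displayed factorization explicitly rather than assert it.
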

\begin{proof}
  Set $\eta = \zeta_3\zeta_4^{-1},\xi =\zeta_1\zeta_2^{-1},$ and
  $\zeta = \zeta_2\zeta_4^{-1}$ and define
  \begin{equation*}
    x = \frac{\zeta_1-\zeta_2}{\zeta_3-\zeta_4} = 
\zeta \frac{\xi-1}{\eta-1}.
  \end{equation*}
We assume $x\in \IQ$ and, after possibly swapping $\zeta_1$ and
$\zeta_2$, also $x>0$. 

 Note that the number field 
 $K=\IQ(\zeta_1,\zeta_2,\zeta_3,\zeta_4)$
 has only complex embeddings 
 as it contains at least  4 roots of unity.
 So 
 the $K/\IQ$-norm $\no{\cdot}{K/\IQ}$ is never negative. 
 We have
 \begin{equation}
\label{eq:normx}
  x^{[K:\IQ]} = \no{x}{K/\IQ} =
  \frac{\no{\xi-1}{K/\IQ}}{\no{\eta-1}{K/\IQ}}.
\end{equation}

We now divide into four cases, depending on whether the orders of
$\eta$ and $\xi$  are prime powers or not. 

First suppose that neither $\eta$ nor $\xi$ has order a prime power.
Then $\eta-1$ and $\xi-1$ are units and hence $x=1$ by
(\ref{eq:normx}).
We obtain the vanishing sum of roots of unity
\begin{equation}
\label{eq:sumrootof1}
\eta   - \zeta\xi + \zeta -1 =0.
\end{equation}

If a non-trivial subsum vanishes,  then
\begin{equation*}
\eta -\zeta\xi = \zeta-1=0\quad\text{or}\quad
\eta +\zeta = -\zeta\xi-1 = 0 \quad\text{or}\quad 
  \eta -1 =-\zeta\xi +\zeta = 0.
\end{equation*}
The first and third cases are impossible as $\zeta\not=1$ and $\eta\not=1$.
Thus $\zeta\xi = -1$ and this implies $\zeta_1=-\zeta_4$, as desired.

If no non-trivial subsum vanishes, then 
 Mann's Theorem 1 \cite{Mann}  implies 
$\eta^6 = \xi^6 =
\zeta^6=1$. 
In the current case,   $\eta$ and $\xi$ must have precise order $6$. 
If $\eta=\xi$, then (\ref{eq:sumrootof1}) implies $\eta=1$ or $\zeta=1$ which
contracts the hypothesis. 
Hence $\eta\not=\xi$ and thus 
$\eta\xi=1$.
When combined with (\ref{eq:sumrootof1}) we have
\begin{equation}
\label{eq:zeta1zeta4minus1}
   \zeta\xi =  \frac{\eta-1}{\xi-1} \xi
= \frac{\eta\xi-\xi}{\xi-1} = \frac{1-\xi}{\xi-1}=-1,
\end{equation}
and again $\zeta_1=-\zeta_4$. 

Now suppose that $\xi$ has order $p^e$ with $p$  a prime and
$e\ge 1$ and that the order of $\eta$ is not a prime power. 
Then $\eta-1$ remains a unit but now
 $\no{\xi-1}{K/\IQ} = p^{[K:\IQ(\xi)]}$, so $x^{\varphi(p^e)} = p$. As $x$ is rational, we must have $\varphi(p^e)=1$, so
$p^e=2$ 
and thus $\zeta_1\zeta_2^{-1}=\xi=-1$, which completes this case.

Similarly, if $\eta$ has prime power order and $\xi$ does not, then 
we apply the argument from the last paragraph to $x^{-1} = \zeta^{-1}
(\eta-1)/(\xi-1)$ and conclude 
$\zeta_3\zeta_4^{-1}=\eta =-1$, as desired.

Finally, suppose $\eta$ has order $q^{e'}$ and $\xi$ has order $p^{e}$,
here $p$ and $q$ are primes and $e,e'\ge 1$. Now (\ref{eq:normx})
implies
  $x=p^{1/\varphi(p^{e})}q^{-1/\varphi(q^{e'})} \in\IQ$. 
If $p\not=q$, then $p^e=q^{e'}=2$ and so $\eta=\xi=-1$, as desired.
So say $p=q$, hence $x=p^{1/\varphi(p^{e})-1/\varphi(p^{e'})}\in\IQ$ and it
follows that
\begin{equation*}
\frac{1}{(p-1)p^{e-1}} - \frac{1}{(p-1)p^{e'-1}} \in\IZ
\end{equation*}
and this entails $e=e'$. We find $x=1$ and are thus back in the
situation of the first case except that $\eta$ and $\xi$ now have
prime power order.
We proceed similarly. 
If a non-trivial subsum in (\ref{eq:sumrootof1}) vanishes, then
again $\zeta_1=-\zeta_4$. Otherwise Mann's Theorem yields
$\eta^6=\xi^6=1$. This time $\eta$ and $\xi$ have equal order 
which is $2$ or $3$. If the common order is $2$ then we are
done. Else wise it is $3$ and again we must have $\eta\not=\xi$ which
again implies
$\eta\xi=1$. We conclude
 $\zeta_1=-\zeta_4$ as in (\ref{eq:zeta1zeta4minus1}).
\end{proof}

\begin{proof}[Proof of Corollary \ref{cor:myerson}]
  Let $\xi$ be a root of unity of order $f$. We set 
  \begin{equation*}
    \alpha_1 = \xi-1, \quad \alpha_2 = \xi^2-1,\quad\ldots\quad \alpha_{f-1}=\xi^{f-1}-1.
  \end{equation*}
Our aim is to apply Theorem \ref{thm:mainapp} to $K =\IQ(\xi)$ and
$n=f-1\ge 2$. Say $\Omega$ is as in the said theorem. 

Observe that
\begin{equation*}
  \alpha_1\IZ + \cdots + \alpha_n \IZ = 
(\xi-1) (\IZ + (\xi+1)\IZ + \cdots + (\xi^{f-2}+\cdots + \xi +1)\IZ)
 = (\xi-1)\IZ[\xi]
\end{equation*}
as $\xi^{f-1} = -(\xi^{f-2}+\cdots +\xi+1)$. This group has rank
$m=\varphi(f)$
and therefore $\Omega$ also has rank $m\ge 2$.


We shall show that the hypothesis on $\Omega$ in Theorem \ref{thm:mainapp} is satisfied. 
Indeed, if $i,j,k,l\in \{0,\ldots,n\}$ are pairwise distinct, then 
\begin{equation*}
  \frac{\xi^i - \xi^j}{\xi^k-\xi^l}\in\IQ
\end{equation*}
implies that $\xi$ has even order by Lemma \ref{lem:rootofunity}. This
contradicts our hypothesis on $f$.

Finally, observe that $p\equiv 1 \imod{f}$ means that $p$ splits
completely in $K$. For such $p$ there is $(a_1,\ldots,a_n)$ as
above (\ref{eq:mainapp}) where $v_0$ is any place of $K$ extending the
$p$-adic absolute value. Here $e(v_0)=1$. 
Now $1,1+a_1,\ldots,1+a_n$ is a complete set of representatives
of the subgroup of $\IF_p^\times$ of order  $f=n+1$. 
The corollary follows as
\begin{equation*}
\left| 1+\zeta^{ta_1}+\cdots +
  \zeta^{ta_n}\right| = 
 \left| \zeta^t+\zeta^{t(1+a_1)}+\cdots +
  \zeta^{t(1+a_n)}\right|
\end{equation*}
for all $1\le t\le p-1$. 
\end{proof}

\begin{proof}[Proof of Theorem \ref{thm:sprime}]
We set $\Omega$ as in the proof of Corollary \ref{cor:myerson}. Its
rank is $\varphi(f)=f-1$ as $f$ is a
prime by hypothesis. 
Since $\Omega$ is a primitive subgroup of $\IZ^{f-1}$ we conclude
$\Omega=\IZ^{f-1}$. In the definition (\ref{def:omega}) of $\ma{\Omega}$ we may take $A$
to equal the $(f-1)\times(f-1)$ unit matrix. The resulting logarithmic
Mahler
measure is that of $1+X_1+\cdots+X_{f-1}$. 
\end{proof}

\begin{proof}[Proof of Corollary \ref{cor:finitelymanyunits}]
Observe that $\sum_{g\in G}e^{2\pi \sqrt{-1} g/p}$ is an algebraic integer. 
  In view of Theorem \ref{thm:sprime} it suffices to 
verify $\ma{1+X_1+\cdots + X_{f-1}}\not=0$. 
The higher dimensional version of
 Kronecker's Theorem 
classifies integral polynomials whose 
 logarithmic Mahler measure vanishes, see  work of
Boyd \cite{Boyd:Kronecker}, Lawton \cite{Lawton77}, and
 Smyth \cite{Smyth:Kronecker}. As our
$1+X_1+\cdots +X_{f-1}$ is irreducible and $f\ge 3$ we easily deduce
from this classification
 that its logarithmic Mahler measure is non-zero.
\end{proof}

\appendix
\section*{Appendix On Lawton's Theorem}

In this appendix we provide a rate of convergence for the following
theorem of  Lawton.
The arguments here do not rely on  the rest of the paper. 
Say $n\ge 1$ is an integer. Recall that the definition of $\rho$ is given in  (\ref{def:rho}).  

\begin{theorem}[Lawton, Theorem 2  \cite{Lawton}]
Suppose $P\in\IC[X_1^{\pm 1},\ldots,X_n^{\pm 1}]\ssm\{0\}$.
If $a\in\IZ^n$, then
$\ma{P(X^{a_1},\ldots,X^{a_n})} = \ma{P}+
o(1)$ as $\rho(a;\IZ^n)\rightarrow\infty$. 
\end{theorem}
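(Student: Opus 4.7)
The plan is to compare the one-variable integral
\[
  \ma{P(X^{a_1},\ldots,X^{a_n})} = \int_0^1 F(a_1 t,\ldots,a_n t)\,dt,\qquad
  F(x_1,\ldots,x_n) = \log\bigl|P(e^{2\pi\sqrt{-1}x_1},\ldots,e^{2\pi\sqrt{-1}x_n})\bigr|,
\]
with the $n$-variable integral $\ma{P} = \int_{[0,1]^n} F$ by a Fourier-theoretic argument in which the parameter $\rho(a;\IZ^n)$ controls how many Fourier modes of $F$ see no difference between the two integrals. The obstacle throughout is that $F$ has logarithmic singularities along the zero set of $P$ on the real torus.

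First I would remove the singularity by truncation: for a small parameter $\epsilon>0$ set $F_\epsilon = \max(F,\log\epsilon)$, and write $F = F_\epsilon - (F_\epsilon - F)$. The bounded function $F_\epsilon$ is Lipschitz away from $\{|P|<\epsilon\}$, and a standard \L ojasiewicz-type inequality for the real-analytic function $|P|^2$ on the compact torus gives a bound of the form $\mathrm{vol}\{|P|<\epsilon\}\ll \epsilon^{\kappa}$ for some $\kappa>0$ depending only on $P$. This controls both $\int_{[0,1]^n}(F_\epsilon-F)$ and, by a
union-bound type argument on the fibres $t\mapsto (a_1 t,\ldots,a_n t)$ (using that the one-dimensional preimage of $\{|P|<\epsilon\}$ under this linear map has length bounded in terms of the same \L ojasiewicz exponent, independently of $a$), the corresponding one-variable contribution.

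Next, approximate the smoothed-out $F_\epsilon$ on the torus $[0,1]^n$ by a trigonometric polynomial
\[
T_N(x) = \sum_{k\in\IZ^n,\ |k|\le N} c_k\, e^{2\pi\sqrt{-1}\langle k,x\rangle}
\]
of degree at most $N$, with explicit bounds on $\|F_\epsilon - T_N\|_{L^1([0,1]^n)}$ in terms of $\epsilon$ and $N$ coming from the modulus of continuity of $F_\epsilon$ (e.g. via Jackson-type estimates). The key observation is Fourier-theoretic: for $k\in\IZ^n$ one has
\[
  \int_0^1 e^{2\pi\sqrt{-1}\langle k,a\rangle t}\,dt =
  \begin{cases} 1 & \text{if } \langle k,a\rangle = 0,\\ 0 & \text{otherwise,}\end{cases}
\qquad
  \int_{[0,1]^n} e^{2\pi\sqrt{-1}\langle k,x\rangle}\,dx =
  \begin{cases} 1 & \text{if } k=0,\\ 0 & \text{otherwise.}\end{cases}
\]
Hence the two integrals of $T_N$ agree provided no nonzero $k\in\IZ^n$ with $|k|\le N$ satisfies $\langle k,a\rangle=0$, and by definition of $\rho(a;\IZ^n)$ this holds as soon as $N<\rho(a;\IZ^n)$.

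Putting the pieces together: choose $N$ slightly smaller than $\rho(a;\IZ^n)$ and then choose $\epsilon$ as a suitable negative power of $\rho(a;\IZ^n)$ to balance the \L ojasiewicz error (which grows as $\epsilon\to 0$ on the one-variable side via $\log\epsilon\cdot\epsilon^\kappa$) against the trigonometric approximation error (which grows with $N$ and decays with $\epsilon$). This yields both integrals of $T_N$ equal, with total error tending to $0$ as $\rho(a;\IZ^n)\to\infty$, and gives the qualitative statement plus an explicit rate. The main technical obstacle is the simultaneous control of the modulus of continuity of $F_\epsilon$ and the measure of $\{|P|<\epsilon\}$, both of which must be made effective in a way that depends only on $P$; once these \L ojasiewicz-Jackson inputs are in hand, the rest is bookkeeping.
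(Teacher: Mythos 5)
The overall architecture of your proposal matches the paper's appendix: truncate the logarithmic singularity, approximate the truncated function by a trigonometric polynomial, use the Fourier observation that frequencies $k$ with $\langle k,a\rangle=0$ and $0<|k|<\rho(a;\IZ^n)$ do not exist, and balance the truncation parameter against the degree. The paper prefers a $C^b$ smooth cutoff $\psi_\sv\circ|P(e(\cdot))|^2$ over your hard truncation $\max(F,\log\epsilon)$, which lets it replace the Jackson-type argument by plain Fourier-coefficient decay, but that is a matter of taste.

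There is, however, a genuine gap at the step you phrase as ``by a union-bound type argument on the fibres $t\mapsto(a_1t,\ldots,a_nt)$ (using that the one-dimensional preimage of $\{|P|<\epsilon\}$ has length bounded in terms of the same \L ojasiewicz exponent, independently of $a$).'' A \L ojasiewicz inequality for the fixed $n$-variable polynomial $P$ on $[0,1]^n$ does \emph{not} transfer to uniform control of the line slices. The relevant one-variable object is $Q_a(X)=P(X^{a_1},\ldots,X^{a_n})$, whose degree grows with $|a|$, so any bound on $\mathrm{length}\{t\in[0,1]:|Q_a(e(t))|<\epsilon\}$ that one extracts from a degree-dependent \L ojasiewicz/B\'ezout-type argument will deteriorate as $\rho(a;\IZ^n)\to\infty$, which is exactly the regime you need. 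The ingredient that actually makes the argument close is Lawton's Theorem~1 (Lemma~\ref{lem:lawtonthm1} in the paper): the measure of $\{t:|Q(e(t))|<\sv\}$ for a monic one-variable polynomial $Q$ is $O(\sv^{1/(k-1)})$ with a constant depending only on the \emph{number of nonzero terms} $k$, not on $\deg Q$. Since $Q_a$ has at most as many nonzero terms as $P$ (and exactly as many once $\rho(a;\IZ^n)$ is large enough to prevent exponent collisions), this gives the uniform-in-$a$ bound you need; nothing weaker does. Without this sparse-polynomial estimate the one-variable error term is not controlled, and the proof does not go through. Everything else in your sketch is sound.
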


We closely following Lawton's approach but keep track of
estimates to obtain the following refinement. 

 \begin{theorem}
\label{thm:lawtonquant}
Suppose $P\in\IC[X_1^{\pm 1},\ldots,X_n^{\pm 1}]\ssm\{0\}$ has $k\ge
   2$ non-zero terms and let $\epsilon >0$.  
For $a=(a_1,\ldots,a_n)\in\IZ^n$ we have
\begin{equation*}
\ma{P(X^{a_1},\ldots,X^{a_n})} = \ma{P}+
O\left(\rho(a;\IZ^n)^{-\frac{1}{4(k-1)}+\epsilon}\right)
\end{equation*}
as $\rho(a;\IZ^n)\rightarrow\infty$ where the implied constant depends
on $n,P,$ and $\epsilon$. 
 \end{theorem}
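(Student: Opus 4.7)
The plan is to follow Lawton's proof strategy while quantifying each step. Let $f(x) = \log|P(e^{2\pi\sqrt{-1}x_1},\ldots,e^{2\pi\sqrt{-1}x_n})|$ on $[0,1]^n$, so that $\ma{P} = \int_{[0,1]^n} f\, dx$ and, applying Jensen's formula to $Q(X) = P(X^{a_1},\ldots,X^{a_n})$,
\[\ma{Q} = \int_0^1 f(a_1 t,\ldots,a_n t)\, dt\]
with arguments read modulo $1$. The task is thus to quantify the discrepancy between the torus integral of $f$ and its orbit average along the one-parameter subgroup generated by $a$, despite the logarithmic singularities of $f$ along the zero set of $P$.

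To handle the singularities, for a parameter $\eta\in(0,1)$ I would truncate $f$ by setting $f_\eta(x) = \log\max\{|P(e^{2\pi\sqrt{-1}x})|,\eta\}$, which is bounded below by $\log\eta$ and globally Lipschitz with constant $O(\eta^{-1})$. Controlling the truncation error $\|f - f_\eta\|_{L^1}$ requires a \L{}ojasiewicz / Remez-type sublevel-set estimate of the shape
\[\vol{\{x\in[0,1]^n:|P(e^{2\pi\sqrt{-1}x})|<\eta\}} = O(\eta^{1/(k-1)}),\]
together with an analogous one-dimensional bound along the orbit (which exploits that $Q$ is itself a fewnomial with at most $k$ terms) with explicit polynomial dependence on $|a|$. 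The exponent $1/(k-1)$ encodes the fewnomial structure of $P$, and a layer-cake integration then converts these sublevel-set bounds into a truncation error of $O(|\log\eta|\,\eta^{1/(k-1)})$ on both the torus and the curve.

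For the truncated function I would Fourier-expand $f_\eta(x) = \sum_{\xi\in\IZ^n}\hat f_\eta(\xi)e^{2\pi\sqrt{-1}\langle \xi,x\rangle}$. Integrating along the orbit yields
\[\int_0^1 f_\eta(at)\,dt - \int_{[0,1]^n} f_\eta\,dx = \sum_{\substack{\xi\in\IZ^n\ssm\{0\}\\ \langle\xi,a\rangle=0}} \hat f_\eta(\xi),\]
and the constraint $\langle\xi,a\rangle=0$ forces $|\xi|\ge\rho(a;\IZ^n)$ by the very definition of $\rho$. Combining the Lipschitz Fourier decay $|\hat f_\eta(\xi)|\ll \eta^{-1}|\xi|^{-1}$ with a lattice-point count inside the hyperplane $\langle\xi,a\rangle=0$ produces an equidistribution error of size $O(\eta^{-1}\rho(a;\IZ^n)^{-1})$ up to harmless polylogarithmic factors. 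Adding this to the truncation error gives
\[O(|\log\eta|\,\eta^{1/(k-1)}) + O(\eta^{-1}\rho(a;\IZ^n)^{-1}),\]
and balancing by choosing $\eta$ as a suitable negative power of $\rho(a;\IZ^n)$ produces a rate of the form $\rho(a;\IZ^n)^{-\alpha+\epsilon}$; the specific value $\alpha=1/(4(k-1))$ falls out once the $|a|$-dependence of the curve sublevel-set constant and all Fourier-decay losses are carefully tracked.

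The principal obstacle is the sublevel-set / \L{}ojasiewicz estimate for a fewnomial with an exponent depending only on the number of terms $k$ rather than on the degrees — and in particular its analogue along the one-parameter orbit, where the implicit constants must have controlled polynomial growth in $|a|$. The factor $4$ in the final exponent arises from the combination of this fewnomial estimate (contributing a factor $1/(2(k-1))$ after balancing) with the Fourier equidistribution for the Lipschitz truncation (contributing a further factor of $1/2$); any sharpening of either input would translate directly into a better exponent.
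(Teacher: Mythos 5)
Your overall strategy — truncate the logarithmic singularity, split into an equidistribution error (Fourier) and a truncation error (sublevel-set / \L{}ojasiewicz), and balance the cutoff parameter — is exactly the shape of the paper's argument, and your identification of the key ingredients (the fewnomial sublevel-set estimate with exponent depending only on $k$, the orthogonality constraint $\langle\xi,a\rangle=0$ forcing $|\xi|\ge\rho(a;\IZ^n)$) is correct. But there is a genuine gap at the Fourier step.

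You propose the hard truncation $f_\eta=\log\max\{|P(e(\cdot))|,\eta\}$, which is only Lipschitz with constant $O(\eta^{-1})$, and then want to bound
\begin{equation*}
\sum_{\substack{\xi\in\IZ^n\ssm\{0\}\\ \langle\xi,a\rangle=0}}|\hat f_\eta(\xi)|
\end{equation*}
using $|\hat f_\eta(\xi)|\ll \eta^{-1}|\xi|^{-1}$. The sum runs over a sublattice of rank $n-1$, and $|\xi|^{-1}$ decay is not summable over a lattice of rank $\ge 1$: already for $n=2$ the resulting series is the harmonic series, and for $n\ge 3$ the tail over $|\xi|\ge\rho$ grows like $\rho^{n-3}$ rather than decaying. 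So the claimed equidistribution error $O(\eta^{-1}\rho^{-1})$ cannot be extracted from a one-derivative gain. The paper avoids this by building a $C^b$-smooth cutoff $f_\sv$ (a flat bump $\phi$ applied to $|P(e(x))|^2$ on the window $[(\sv/2)^2,\sv^2]$) with $b>n$, which makes the Fourier tail $O\!\left(|f_\sv|_{C^b}\,\rho^{-(b-n)}\right)$ absolutely convergent; the price is $|f_\sv|_{C^b}=O(\sv^{-2b}|\log\sv|)$, and one then lets $b\to\infty$.

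This also changes the bookkeeping for the final exponent in a way your sketch does not capture. If the Lipschitz bound $\eta^{-1}\rho^{-1}$ \emph{were} available, balancing it against the torus truncation error $O(\eta^{1/(2(k-1))-\epsilon})$ would give a rate like $\rho^{-1/(2k-1)+\epsilon}$, which is \emph{better} than $\rho^{-1/(4(k-1))+\epsilon}$ — a sign that something is off. In the paper, one factor of $2$ comes from the multivariate sublevel-set estimate $\vol{S(P,\sv)}=O(\sv^{1/(2(k-1))})$ (Lemma \ref{lem:volSPep}(i), which loses a factor of $2$ over Lawton's one-variable $\sv^{1/(k-1)}$ via Fubini), and the other factor of $2$ comes precisely from balancing $\sv^{-2b}\rho^{n-b}$ against $\sv^{1/(2(k-1))}$: setting $\sv=\rho^{-\gamma}$ forces $\gamma=(b-n)/(2b+1/(2k-2))\to 1/2$ as $b\to\infty$. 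The factor $1/2$ is thus a cost of the $C^b$-smoothing needed to make the Fourier tail converge, not a feature of a Lipschitz cutoff. To repair your argument you must replace the hard truncation by an explicitly $C^b$ mollification with $b$ arbitrary large, track the $\sv^{-2b}$ blowup of its $C^b$-norm, and then optimize in $b$; this is exactly Lemmas \ref{lem:lawton1}--\ref{lem:lawton3} and the final balancing (\ref{eq:chooseepprime}) in the appendix.
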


An important tool is an estimate on the measure of the subset of the
unit circle, where a polynomial takes small values. 
We use $\vol{\cdot}$ to denote the Lebesgue  measure on $\IR^n$. 
For $P\in \IC[X_1^{\pm 1},\ldots,X_n^{\pm 1}]$ and $\sv > 0$ we
define
\begin{equation*}
  S(P,\sv)  =
\{ x \in [0,1)^n : |P(e(x))|< \sv \}
\end{equation*}
where  $e(x)=(e^{2\pi \sqrt{-1}x_1},\ldots,e^{2\pi \sqrt{-1}x_n})$ 
for $x=(x_1,\ldots,x_n)\in \IR^n$. 

\begin{lemma}[Lawton, Theorem 1 \cite{Lawton}]
\label{lem:lawtonthm1}
For $k\ge 1$ there exists a constant $C_k>0$ with the following
property. 
If $P\in \IC[X]$ is a monic polynomial with $k$ non-zero
  terms and $\sv >0$, then
   $ \vol{ S(P,\sv) }
\le C_k \sv^{1/\max\{1,k-1\}}$
\end{lemma}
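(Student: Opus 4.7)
My plan is to prove the bound by induction on $k$, the number of non-zero terms of $P$. The base case $k=1$ is immediate, since then $P$ is a monomial and $|P(e(x))|\equiv 1$ on $[0,1)$, so $S(P,y)=\emptyset$ for $y\le 1$ and the bound $\vol{S(P,y)}\le y$ holds trivially with $C_1=1$.

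For the inductive step $k\ge 2$, I would first normalize so that $P(0)\ne 0$. If $P(X)=X^m Q(X)$ with $Q(0)\ne 0$, then $|P|=|Q|$ on the unit circle, $Q$ remains monic with $k$ non-zero terms, and $S(P,y)=S(Q,y)$, so we may replace $P$ by $Q$. Under this normalization $P'$ has exactly $k-1$ non-zero terms, because the constant term of $P$ is killed by differentiation. The key structural fact, proved by a Vandermonde determinant argument, is that a polynomial with $k$ distinct monomial terms can vanish to order at most $k-1$ at any non-zero complex number; this bounds how flat $|P|$ can be near its zeros on the unit circle. With a threshold $V>0$ I would split
\begin{equation*}
S(P,y)=\bigl(S(P,y)\cap\{x:|P'(e(x))|\ge V\}\bigr)\cup\bigl(S(P,y)\cap\{x:|P'(e(x))|<V\}\bigr).
\end{equation*}
On the ``small derivative'' piece the set embeds into $S(P'/d,V/d)$ with $d=\deg P$, so the inductive hypothesis applied to the monic polynomial $P'/d$ (which has $k-1$ non-zero terms) yields a bound with exponent $1/(k-2)$ in the variable $V/d$ when $k\ge 3$, and a linear bound when $k=2$. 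On the ``large derivative'' piece, a quantitative mean value argument bounds each connected component of $\{|P|<y\}$ by $O(y/V)$, with the number of components controlled in terms of $k$ via the bound on orders of vanishing. Choosing $V$ as a suitable power of $y$ balances the two estimates and produces the exponent $1/(k-1)$.

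The main obstacle is ensuring that the resulting constant $C_k$ depends only on $k$, independently of $\deg P$: the inductive application to $P'/d$ naively injects a factor of $d$ that can be arbitrarily large, whereas the model case $P(X)=X^d-1$ already shows that a bounded total measure can arise only through cancellation among the $d$ contributions from the $d$ roots on the unit circle. Managing this cancellation carefully, for instance by carrying through an auxiliary invariant such as $\max|P|$ on the unit circle (or the Mahler measure of $P$) alongside the sublevel-set estimate in the induction, is the subtlest point of the argument.
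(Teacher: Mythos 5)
The paper does not supply a proof of this lemma for $k\ge 2$; it cites Lawton's Theorem~1 directly and only verifies the trivial case $k=1$. Your inductive sketch is therefore an independent attempt, and it contains a genuine gap. The assertion that ``the number of components [of $\{x:|P(e(x))|<y\}$] is controlled in terms of $k$ via the bound on orders of vanishing'' is false: for $P(X)=X^d-1$ (so $k=2$) and small $y$, the sublevel set is a union of $d$ disjoint intervals, one near each $d$-th root of unity, and $d$ is unbounded. The Vandermonde argument bounds the \emph{multiplicity} of any single zero of a $k$-term polynomial by $k-1$; it says nothing about the \emph{number} of distinct zeros on $S^1$, which is governed by $\deg P$.

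You do flag the underlying obstruction yourself in the closing paragraph, and the $X^d-1$ example shows exactly where the cancellation must occur: there are $\sim d$ components, each of measure $\sim y/d$ because $|P'|\equiv d$ on $S^1$, and the $d$'s cancel in the total. But the sketch as written does not realize this cancellation; it instead invokes the false component-count bound, and your proposed fix (carrying an auxiliary invariant through the induction) is only named, not carried out. A secondary difficulty glossed over is that $|P'(e(x))|\ge V$ controls the speed of the complex curve $t\mapsto P(e(t))$, not the rate of change of its modulus; a plane curve of speed $\ge 2\pi V$ can dwell inside a disk of radius $y$ for time much longer than $y/V$, so the ``quantitative mean value'' estimate on the large-derivative piece also requires a real argument (controlling re-entries into the disk), which again threatens to reintroduce $\deg P$. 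So the proposal identifies the right reduction (passing to a $(k-1)$-term polynomial after normalizing $P(0)\ne 0$, which is close in spirit to Lawton's use of $Xf'-n_1 f$), but the two central estimates it relies on are not established.
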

The reference covers the important case $k\ge 2$.
If $k=1$, then $|P(e(x))|=1$ for all
$x\in\IR^n$ and the claim is clear with $C_1 = 1$.

\begin{lemma}
\label{lem:volSPep}
Assume $P\in \IC[X_1^{\pm 1},\ldots,X_n^{\pm 1}]
\ssm\{0\}$ has $k\ge 1$ non-zero terms. 
\begin{enumerate}
\item [(i)]
If for all $\sv  \in (0,1]$ then    $\vol{S(P,\sv)}
=O\left( \sv^{1/(2\max\{1,k-1\})}\right)$
where the constant in $O(\cdot)$ depends only on $P$. 
\item[(ii)] If $q>0$ then
$\int_{[0,1)^n}\bigl|\log |P(e(x))|\bigr|^q dx$ is well-defined and finite. 
\end{enumerate}
\end{lemma}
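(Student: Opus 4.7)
\emph{Proof plan.} We will deduce (ii) from (i) by a layer-cake argument, and we will prove (i) by induction on $n$. For (ii), split
\[
\int_{[0,1)^n}\bigl|\log|P(e(x))|\bigr|^q\,dx
= \int_{\{|P(e(x))|\ge 1\}}(\log|P(e(x))|)^q\,dx
+ \int_{\{|P(e(x))|<1\}}(-\log|P(e(x))|)^q\,dx.
\]
The first integral is bounded by $(\log\sup_x|P(e(x))|)^q<\infty$ since $P$ is a finite trigonometric sum. The second equals $q\int_0^\infty t^{q-1}\vol{S(P,e^{-t})}\,dt$ by the distribution-function identity, and part (i) dominates this by $qC\int_0^\infty t^{q-1}e^{-\alpha t}\,dt = qC\Gamma(q)\alpha^{-q}<\infty$ where $\alpha = 1/(2\max\{1,k-1\})$.

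For (i) we induct on $n$. The base case $n=1$ reduces to Lemma \ref{lem:lawtonthm1}: after dividing $P\in\IC[X^{\pm 1}]\ssm\{0\}$ by its leading coefficient and multiplying by an appropriate power of $X$, one obtains a monic polynomial in $\IC[X]$ with the same number $k$ of non-zero terms, and Lawton's univariate bound produces the even stronger exponent $1/\max\{1,k-1\}$ (which dominates $\sv^{1/(2\max\{1,k-1\})}$ for $\sv\le 1$).

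For the inductive step $n\ge 2$, decompose $P$ by the last variable as
\[
P = \sum_{i=1}^{k_n}b_i(X_1,\ldots,X_{n-1})X_n^{m_i}
\]
with $m_1<\cdots<m_{k_n}$ distinct integers and $b_i\in\IC[X_1^{\pm 1},\ldots,X_{n-1}^{\pm 1}]\ssm\{0\}$, and let $k_M$ denote the number of non-zero terms of the leading coefficient $b_{k_n}$. Since the total number of terms of $P$ equals $k$, we have $k_M\le k-k_n+1$ when $k_n\ge 2$, while $k_M=k$ when $k_n=1$. If $k_n=1$ then $|P(e(x))|=|b_1(e(x'))|$ for $x'=(x_1,\ldots,x_{n-1})$, and the inductive hypothesis applied to $b_1$ (with $k$ terms in $n-1$ variables) yields the claim at once. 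For $k_n\ge 2$ we choose $\tau\in(0,1)$ to be optimized and split
\[
\vol{S(P,\sv)} \le \int_{\{|b_{k_n}(e(x'))|\ge\sv^\tau\}}\vol{\{x_n\in[0,1):|P(e(x',x_n))|<\sv\}}\,dx' + \vol{\{x':|b_{k_n}(e(x'))|<\sv^\tau\}}.
\]
On the first region $P(e(x'),\cdot)/b_{k_n}(e(x'))$ is, after multiplication by a power of $X_n$, a monic Laurent polynomial with at most $k_n$ non-zero terms, so Lemma \ref{lem:lawtonthm1} gives the pointwise bound $\vol{\{x_n:\ldots\}}\le C_{k_n}\sv^{(1-\tau)/(k_n-1)}$. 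The second term is controlled by the inductive hypothesis applied to $b_{k_n}$, producing a bound $O(\sv^{\tau/(2\max\{1,k_M-1\})})$. Balancing the two exponents by solving for $\tau$ yields the common exponent $1/(k_n+2\max\{1,k_M-1\}-1)$; the elementary estimate $k_n+2k_M\le 2k$ (a consequence of $k_M\le k-k_n+1$ for $k_n\ge 2$) shows that this exponent is at least $1/(2k-3)\ge 1/(2(k-1))$, as required. The subcase $k_M=1$ (where $|b_{k_n}|$ is a positive constant) requires only the first term and yields the stronger exponent $1/(k_n-1)$.

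The main obstacle is the exponent bookkeeping in this inductive step: Lemma \ref{lem:lawtonthm1} forces the factor $1/(k_n-1)$ inside the first integrand, while the inductive hypothesis controls the measure of the small-leading-coefficient set only to order $1/(2(k_M-1))$. The factor of $2$ in the statement of (i) is the precise slack needed so that, after balancing via $\tau$, the inequality $k_n+2k_M\le 2k$ closes the induction; without it the bound on the exceptional set would be too weak to absorb the univariate contribution.
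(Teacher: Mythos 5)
Your argument for (i) follows the same inductive scheme as the paper's own proof (induction on $n$, slicing by the last variable, applying Lemma \ref{lem:lawtonthm1} to the one-variable fibers and the inductive hypothesis to the leading coefficient), but you introduce a free threshold $\sv^\tau$ and optimize $\tau$ against $k_n$ (the number of distinct $X_n$-exponents) and $k_M$ (the number of terms of the leading coefficient $b_{k_n}$), whereas the paper fixes $\tau=1/2$. Your balanced exponent $1/(k_n+2\max\{1,k_M-1\}-1)$, together with the bookkeeping $k_M+(k_n-1)\le k$ (hence $k_n+2k_M\le 2k$ when $k_n\ge 2$), closes the induction cleanly. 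This is in fact a genuine refinement of the paper's step: with $\tau=1/2$ fixed, the inductive hypothesis applied to the leading coefficient $P_0$ with $k_0$ terms gives $\vol{S(P_0,\sv^{1/2})}=O(\sv^{1/(4\max\{1,k_0-1\})})$, and deducing the paper's claimed $O(\sv^{1/(2k-2)})$ from this requires $2\max\{1,k_0-1\}\le k-1$, which fails whenever $P_0$ carries more than $(k+1)/2$ of $P$'s terms (for instance $P=X_1X_2+X_1^2X_2+X_1^3X_2+X_1$ with $k=4$ has $P_0=X_1+X_1^2+X_1^3$ and $k_0=3$); your optimization of $\tau$ supplies exactly the missing slack. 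You also treat the boundary cases $k_n=1$ and $k_M=1$ correctly. For part (ii), your layer-cake formula is an interchangeable alternative to the paper's monotone-convergence argument; both reduce to the exponential decay of $\vol{S(P,e^{-t})}$ supplied by (i). In all, the proof is correct, and in (i) it is more careful about the exponent arithmetic than the version in the paper.
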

\begin{proof}
  Our proof of (i) is by induction on $n$. We may assume that $P$ is a
  polynomial.  

If $n=1$, then the lemma follows from Lawton's Theorem after
normalizing $P$. 

Say $n\ge 2$.  There is nothing to prove if $k=1$, so suppose $k\ge 2$.
We may also assume that $P$ is a polynomial, hence $P = P_0 X_n^{d} + \cdots + P_d$
where $P_0,\ldots,P_d \in \IC[X_1,\ldots,X_{n-1}]$ and $P_0\not=0$. 
We abbreviate 
$\Sigma  = S(P_0,\sv^{1/2})\subset [0,1)^{n-1}$. 
If  $x'\in\IR^{n-1}$ and $P_0(e(x'))\not=0$, then 
$P(e(x'),X)/P_0(e(x'))$ is a monic polynomial in $X$. 
 Fubini's Theorem implies
\begin{alignat*}1
  \vol{S(P,\sv)} &= 
\int_{\Sigma}  \vol{S(P(e(x'),X),\sv)}
    dx'+
\int_{[0,1)^{n-1}\ssm \Sigma}  \vol{S(P(e(x'),X),\sv)}
    dx' \\
&\le 
\vol{\Sigma}+
\int_{[0,1)^{n-1}\ssm \Sigma}  \vol{S(P(e(x'),X)/P_0(e(x'))),\sv/|P_0(e(x'))|}
    dx' \\
&\le 
\vol{\Sigma}+
\int_{[0,1)^{n-1}\ssm \Sigma}  \vol{S(P(e(x'),X)/P_0(e(x'))),\sv^{1/2}}
    dx' \\
&\le 
\vol{\Sigma}+
C_k \sv^{1/(2k-2)}
\end{alignat*}
since $\vol{S(P(e(x'),X),\sv)}\le 1$ and 
by Lemma \ref{lem:lawtonthm1}.

By this lemma applied by induction to $P_0\in \IC[X_1,\ldots,X_{n-1}]\ssm\{0\}$ we conclude
$\vol{\Sigma} \le c(P_0) \sv^{1/(2k-2)}$. 
This yields part (i). 

The statement in (ii) is possible known, we give a proof based on (i). 
For an integer $m\ge 0$ and $x\in\IR^n$ we define
$p_m(x) = \min \{m, |\log |P(e(x))||^q\}\ge 0$ which is interpreted as $m$
if $P(e(x))=0$. Then $p_m$ is a non-decreasing sequence of continuous
functions on $[0,1)^n$.
We set $I_m = \int_{[0,1)^n} p_m(x)dx$. By the Monotone Convergence
  Theorem it suffices to prove that the non-decreasing sequence $(I_m)_{m\ge 1}$
  converges.  
 For all  sufficiently large $m$ we
have $|P(e(x))|\le e^{m^{1/q}}$ if $x\in [0,1)^n$. 
Observe that $p_m$ equals $m$ on $S(P,e^{-m^{1/q}})$ and that it
coincides with
$p_{m+1}$ outside this set. Thus for all large $m$  we have
  \begin{equation*}
 I_{m+1}-I_m = \int_{S(P,e^{-m^{1/q}})}
      (p_{m+1}(x)-p_m(x))dx 
 \le \vol{S(P,e^{-m^{1/q}})}
=O\left(e^{-m^{1/q}/(2k)}\right)
  \end{equation*}
as $p_{m+1}(x)\le m+1$ and where we used (i), the implied constant is
independent of $m$. 
 Since $\sum_{m\ge 1} e^{-m^{1/q}/(2k)} <\infty$ we can use a
 telescoping sum to show that $\sup_{m\ge 0} I_m <\infty$, as
 desired. 
\end{proof}

Let $\IN_0$ denote the non-negative integers. Say
  $b\in\IN_0$  and let $g$ lie in  $C^b(\IR^n)$, the set of real valued
functions on $\IR^n$ 
whose derivatives exist up-to and including order $b$ and are continuous. 
For a multiindex $i=(i_1,\ldots,i_n)\in\IN_0^n$
we set $\ell(i) = i_1+\cdots+i_n$. If $\ell(i)\le b$, then we define
$\partial^i g = (\partial/\partial x_1)^{i_1}\cdots
(\partial/\partial x_n)^{i_n}g$ 
and 
\begin{equation*}
  |g|_{C^b} = \max_{\substack{i\in\IN_0^n \\  \ell(i)\le b}}
\sup_{x\in\IR^n} |\partial^i g(x)|
\end{equation*}
which is possibly $\infty$.

We now introduce a function that 
equals $\log |P(e(\cdot))|$ away from the singular locus but is
continuous on $\IR^n$ and attains $0$ when $P(e(\cdot))$ does. 

 Say
 $\phi \in C^b(\IR)$  is non-decreasing with
 $\phi(0)=0,\phi(1)=1$,
 $\phi(x) = 0$ if $x<0$, and $\phi(x)=1$ if $x>1$.
We ask in addition
that
$\partial^i \phi(0)=\partial^i(1)=0$
for all $i\in \{1,\ldots, b\}$. 
For example, we could take
 the anti-derivative of $x^b(1-x)^b$ that attains $0$  at
$x=0$,  scale it  to attain $1$ at $x=1$, and extend by $0$ for $x<0$
 and by $1$ for $x>1$. 

Say $\sv \in (0,1/2]$. We define $\phi_\sv$ as
$x\mapsto ((2/\sv)^2 x  -1)/3$, which rescales $[(\sv/2)^2,\sv^2]$ to
$[0,1]$, composed with $\phi$. Then
\begin{equation*}
 |\partial^i(\phi_\sv)(t)|\le  \left(\frac{4}{3\sv^2}\right)^i
 \left|\partial^i\phi(t)\right|
\end{equation*}
for all $i\in \{0,\ldots,b\}$ and all $t\in\IR$. Hence
\begin{equation}
\label{eq:phikbound}
 |\phi_\sv|_{C^b} =  O_{b,\phi}(\sv^{-2b}), 
\end{equation}
here and below the implied constant depends on the quantities
appearing in the subscript. 
Finally, we define $\psi_\sv$ as 
\begin{equation*}
  \psi_\sv(t)  = \left\{
  \begin{array}{ll}
    \frac 12 \phi_\sv(t) \log t &: t > 0, \\
    0 &: t \le 0.
  \end{array}\right.
\end{equation*}
Then  $\psi_\sv\in C^b(\IR)$ and all its derivatives up-to and including order $b$
vanish outside of $((\sv/2)^2,\sv^2)$.  
Thus
\begin{equation}
\label{eq:psiCbbound}
  |\psi_\sv|_{C^b} = O_{b,\phi}(\sv^{-2b}|\log\sv|)
\end{equation}
by the Leibniz product rule applied using (\ref{eq:phikbound}) and since $\sv \in (0,1/2]$. 

Say $P\in \IC[X_1,\ldots,X_n]$ 
and write $g(x) = |P(e(x))|^2$, this is a smooth function
$\IR^n\rightarrow\IR$ and
\begin{equation}
\label{eq:gCbbound}
  |g|_{C^b} =O_{b,P,n}(1). 
\end{equation}

We define $f_\sv = \psi_y\circ g$. If $P(e(x))\not=0$
with $x\in\IR^n$, then 
\begin{equation*}
f_\sv(x)=
 \psi_\sv(|P(e(x))|^2) = 
\phi_\sv(|P(e(x))|^2) \log |P(e(x))|
\end{equation*}
and for any $x\in\IR^n$ we have
\begin{equation*}
f_\sv(x) = \left\{
\begin{array}{ll}
  0 & :\text{ if $|P(e(x))|\le \sv/2$,}\\
  \log |P(e(x))| & :\text{ if $|P(e(x))|\ge\sv$.}
\end{array}\right. 
\end{equation*}
Moreover, the  composition $f_\sv :
\IR^n\rightarrow [0,1]$ lies in $C^b(\IR^n)$. We can bound its norm
using the following lemma. 

\begin{lemma}
  Let $b\ge 0, \psi \in C^b(\IR)$, and $g\in C^b(\IR^n)$ satisfy
  $|\psi|_{C^b} <\infty$ and $|g|_{C^b}<\infty$. Then
$|\psi\circ g|_{C^b} \le 2^{b(b-1)/2} |\psi|_{C^b} \max\{1,|g|_{C^b}\}^b$.
\end{lemma}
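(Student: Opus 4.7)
I would prove the bound by induction on $b$, the essential tool being the Leibniz product rule together with the chain rule in its simplest form $\partial_\alpha(\psi\circ g)=(\psi'\circ g)\cdot\partial_\alpha g$.

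\textbf{Base case.} For $b=0$ we have $|\psi\circ g|_{C^0}=\sup_x|\psi(g(x))|\le|\psi|_{C^0}$, and the constant $2^{0}=1$ matches the claim.

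\textbf{Inductive step.} Suppose $b\ge 1$ and the claim holds at level $b-1$. Write $M=\max\{1,|g|_{C^b}\}$, and note that the inductive hypothesis (applied to $\psi$ and $g$) already gives $\sup_x|\partial^i(\psi\circ g)(x)|\le 2^{(b-1)(b-2)/2}|\psi|_{C^{b-1}}M^{b-1}$ for every multiindex $i$ with $\ell(i)\le b-1$; since $2^{(b-1)(b-2)/2}\le 2^{b(b-1)/2}$, $|\psi|_{C^{b-1}}\le|\psi|_{C^b}$, and $M^{b-1}\le M^b$, this handles all derivatives of order at most $b-1$. The work is in order $b$: take $i$ with $\ell(i)=b$ and factor $\partial^i=\partial_\alpha\partial^{i'}$ with $\ell(i')=b-1$. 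The chain rule gives
\begin{equation*}
\partial^i(\psi\circ g)=\partial^{i'}\bigl((\psi'\circ g)\cdot\partial_\alpha g\bigr).
\end{equation*}
Apply the inductive hypothesis to the composition $\psi'\circ g$ at level $b-1$: since $|\psi'|_{C^{b-1}}\le|\psi|_{C^b}$ and $\max\{1,|g|_{C^{b-1}}\}\le M$,
\begin{equation*}
|\psi'\circ g|_{C^{b-1}}\le 2^{(b-1)(b-2)/2}|\psi|_{C^b}M^{b-1}.
\end{equation*}
Also $|\partial_\alpha g|_{C^{b-1}}\le|g|_{C^b}\le M$.

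\textbf{Leibniz product estimate.} For any two $C^{b-1}$ functions $f,h$ on $\IR^n$, the identity $\partial^j(fh)=\sum_{k\le j}\binom{j}{k}\partial^k f\,\partial^{j-k}h$ together with $\sum_{k\le j}\binom{j}{k}=2^{\ell(j)}\le 2^{b-1}$ gives $|fh|_{C^{b-1}}\le 2^{b-1}|f|_{C^{b-1}}|h|_{C^{b-1}}$. Applied to $f=\psi'\circ g$ and $h=\partial_\alpha g$, this yields
\begin{equation*}
\bigl|\partial^{i'}\bigl((\psi'\circ g)\partial_\alpha g\bigr)\bigr|\le 2^{b-1}\cdot 2^{(b-1)(b-2)/2}|\psi|_{C^b}M^{b-1}\cdot M=2^{b(b-1)/2}|\psi|_{C^b}M^b,
\end{equation*}
using the arithmetic identity $(b-1)+(b-1)(b-2)/2=b(b-1)/2$, which is the only place where the shape of the constant really matters. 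Taking the supremum over $x$ and over all $i$ with $\ell(i)\le b$ completes the induction.

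\textbf{Expected difficulty.} There is no serious obstacle: the argument is essentially bookkeeping. The only point requiring care is the matching of the exponent of $2$, where one needs the telescoping identity $(b-1)(b-2)/2+(b-1)=b(b-1)/2$, and the use of $\max\{1,|g|_{C^b}\}$ (rather than $|g|_{C^b}$) to absorb the extra factor of $M$ coming from $\partial_\alpha g$ regardless of whether $|g|_{C^b}$ is small or large.
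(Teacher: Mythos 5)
Your proof is correct and follows essentially the same route as the paper's: peel off one derivative via the chain rule to reduce $\partial^i(\psi\circ g)$ to a product $(\psi'\circ g)\cdot\partial_\alpha g$, bound that product by Leibniz, and close the induction on $\psi'\circ g$ at level $b-1$. You are a bit more explicit than the paper about the binomial-coefficient count $\sum_{k\le i'}\binom{i'}{k}=2^{\ell(i')}$ that produces the factor $2^{b-1}$, but the structure and the telescoping of the exponent are identical.
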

\begin{proof}
  The lemma is evident for $b=0$. So say $b\ge 1$ and let
 $i\in\IN_0^n\ssm\{0\}$ with $\ell=\ell(i)\le b$. 
Let $j\in\IN_0^n$ be a standard basis vector
with $i-j\in\IN_0^n$. 
Using the chain rule and the Leibniz product rule we
  find
  \begin{equation*}
    |\partial^i (\psi\circ g)|_{C^0} 
=    |\partial^{i-j} (\partial^j(\psi\circ g))|_{C^0} 
=    |\partial^{i-j} ((\psi'\circ g)\partial^j g)|_{C^0} 
\le 2^{\ell-1}
|\psi'\circ g|_{C^{\ell-1}}
|g|_{C^{\ell}}.
  \end{equation*}
  Thus
  $|\partial^i (\psi\circ g)|_{C^0}  \le 2^{b-1}
|\psi'\circ g|_{C^{b-1}}
|g|_{C^{b}} \le 2^{b(b-1)/2} |\psi|_{C^b}
\max\{1,|g|_{C^b}\}^{b}$, where
we applied this lemma by induction to bound
$|\psi'\circ g|_{C^{b-1}}$ from above.
This upper bound for 
$|\partial^i (\psi\circ g)|_{C^0}$ continues to hold for $i=0$ and it
is therefore an upper bound for $|\psi\circ g|_{C^b}$. 
\end{proof}

This lemma, together with (\ref{eq:psiCbbound}) and
(\ref{eq:gCbbound}), implies
\begin{equation}
\label{eq:fyCbbound}
  |f_\sv|_{C^b}  = O_{b,P,n,\phi}(\sv^{-2b}|\!\log\sv|).
\end{equation}

From now on we  suppose  $b\ge n+1$.
In the next three lemmas and if not stated otherwise,
$P\in\IC[X_1,\ldots,X_n]\ssm\{0\}$ is a polynomial with $k\ge 2$ non-zero
terms.

 \begin{lemma}
\label{lem:lawton1}
Suppose $a\in\IZ^n$ and $y\in(0,1/2]$, then
\begin{equation*}
\int_0^1 f_\sv(at) dt = \int_{[0,1)^n} f_\sv(x)
  dx+ O_{b,P,\phi,n}\left(
  \frac{|\!\log\sv|}{\sv^{2b}} \frac{1}{\rho(a;\IZ^n)^{b-n}}\right). 
\end{equation*}
 \end{lemma}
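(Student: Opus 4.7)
The plan is to expand $f_\sv$ in a Fourier series on the torus and exploit orthogonality along the line $t\mapsto at$. Since $f_\sv(x)=\psi_\sv(|P(e(x))|^2)$ factors through $e(x)$, it is $\IZ^n$-periodic and, being $C^b$ with $b\ge n+1$, admits a uniformly convergent Fourier expansion
\[
f_\sv(x) = \sum_{k\in\IZ^n} c_k\, e^{2\pi\sqrt{-1}\langle k,x\rangle}, \qquad
c_k = \int_{[0,1)^n} f_\sv(x)\,e^{-2\pi\sqrt{-1}\langle k,x\rangle}\,dx.
\]

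First I would control the Fourier coefficients. For $k\in\IZ^n\ssm\{0\}$ pick a coordinate $j$ with $|k_j|=|k|$ (supremum norm). Integrating by parts $b$ times in the variable $x_j$, using that $f_\sv$ and all its partial derivatives up to order $b$ are continuous and $\IZ^n$-periodic, yields
\[
|c_k| \;\le\; \frac{|f_\sv|_{C^b}}{(2\pi|k|)^b}.
\]
Since $b\ge n+1$, this decay makes $(c_k)_{k\in\IZ^n}$ absolutely summable, and the bound $|f_\sv|_{C^b}=O_{b,P,\phi,n}(\sv^{-2b}|\!\log\sv|)$ from \eqref{eq:fyCbbound} quantifies the size of the coefficients.

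Next I would integrate the Fourier series term by term along $t\mapsto at$, which is legitimate by absolute summability. Since $\int_0^1 e^{2\pi\sqrt{-1}\langle k,a\rangle t}\,dt$ equals $1$ if $\langle k,a\rangle=0$ and vanishes otherwise,
\[
\int_0^1 f_\sv(at)\,dt \;=\; c_0 \;+\; \sum_{\substack{k\in\IZ^n\ssm\{0\}\\ \langle k,a\rangle=0}} c_k,
\]
and $c_0=\int_{[0,1)^n} f_\sv(x)\,dx$ is the main term. Every $k$ appearing in the remaining sum is a nonzero element of $(a\IZ)^{\perp}$, so by definition of $\rho(a;\IZ^n)=:\rho$ it satisfies $|k|\ge\rho$.

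The last step is to bound the tail. The number of $k\in\IZ^n$ with $|k|=r$ in the supremum norm is $O_n(r^{n-1})$, so
\[
\sum_{\substack{k\in\IZ^n\ssm\{0\}\\ \langle k,a\rangle=0}} |c_k| \;\ll_n\; |f_\sv|_{C^b}\sum_{r\ge \rho} r^{n-1-b} \;\ll_{b,n}\; |f_\sv|_{C^b}\,\rho^{n-b},
\]
using $b\ge n+1$ so that the series converges. Substituting the bound on $|f_\sv|_{C^b}$ from \eqref{eq:fyCbbound} produces the stated error term $O_{b,P,\phi,n}(\sv^{-2b}|\!\log\sv|\,\rho(a;\IZ^n)^{n-b})$. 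There is no serious obstacle: the only step requiring care is the integration-by-parts bound for $c_k$, which is routine once one observes that the sup norm of $k$ appears naturally by choosing the direction $j$ of maximal index.
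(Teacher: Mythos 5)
Your proof is correct and follows essentially the same route as the paper: Fourier-expand $f_\sv$, use the $C^b$ norm (and \eqref{eq:fyCbbound}) to get $O(|f_\sv|_{C^b}|k|^{-b})$ decay, observe that the nonconstant modes surviving $\int_0^1\cdot\,dt$ have $|k|\ge\rho(a;\IZ^n)$, and sum the tail. The only cosmetic difference is that you derive the coefficient decay by integrating by parts $b$ times in the coordinate of maximal index, while the paper cites Grafakos Theorem 3.2.9(b) for the same estimate.
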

 \begin{proof}
All implied constants in this proof  depend only on $b,P,\phi,$ and $n$. 
The Fourier coefficients $\widehat{f_\sv}(m)$ of $f_\sv\in C^b(\IR)$,
 here $m\in\IZ^n$, 
 decay quickly. Indeed, by  Theorem
3.2.9(b) \cite{Grafakos}\footnote{$|\cdot|$ in the reference is the $\ell^2$-norm}
with $s=b$ and
$|\widehat{\partial^a f_y} (m)|\le |\partial^a f_y|_{C^0} \le
|\partial^a f_y|_{C^b}$ where $\ell(i)=b$.
We conclude
\begin{equation*}
  |\widehat{f_\sv}(m)| =O\left(
  \frac{|f_\sv|_{C^b}}{|m|^{b}}\right)\quad
\text{and so}\quad
  |\widehat{f_\sv}(m)|=O\left(\frac{|\!\log\sv|}{\sv^{2b} |m|^{b}}\right)
\end{equation*}
for all $m\in\IZ^n\ssm\{0\}$ by (\ref{eq:fyCbbound}). 
Say $H\ge 1$, then
$  \sum_{|m|\ge H} |\widehat{f_\sv}(m)| 
=O\left(\frac{|\!\log \sv|}{\sv^{2b}} 
\sum_{|m|\ge H} \frac{1}{|m|^{b}}\right)$, and hence
\begin{alignat}1
  \label{eq:fouriercoeffsumbound}
 \sum_{|m|\ge H} |\widehat{f_\sv}(m)| =O\left(
 \frac{|\!\log\sv|}{\sv^{2b}}\frac{1}{H^{b-n}}\right)
\end{alignat}
 as $b-n\ge 1$

   Since the Fourier coefficients of the continuous
function $f_\sv$ are absolutely
   summable, its Fourier series converges uniformly to $f_\sv$. Hence
   \begin{equation*}
     \int_0^1 f_\sv(at) dt = \sum_{m\in\IZ^n} \int_0^1
     \widehat{f_\sv}(m)e^{2\pi \sqrt{-1}\langle a,m\rangle t}dt
= \widehat{f_\sv}(0)+
\sum_{\substack{m\in\IZ^n\ssm\{0\} \\ \langle a,m\rangle=0}} 
     \widehat{f_\sv}(m).
   \end{equation*}
Now $\widehat{f_\sv}(0)$ equals $\int_{[0,1)^n} f_\sv(x)dx$,
  so the estimate in the assertion follows from
  (\ref{eq:fouriercoeffsumbound}) and
\begin{equation*}
\left|  \sum_{\substack{m\in\IZ^n\ssm\{0\} \\ \langle a,m\rangle=0}} 
     \widehat{f_\sv}(m)\right|\le
\sum_{|m|\ge \rho(a;\IZ^n)} 
|\widehat{f_\sv}(m)| =O\left(\frac{|\!\log\sv|}{\sv^{2b}} \frac{1}{\rho(a;\IZ^n)^{b-n}}\right).\qedhere
\end{equation*}
 \end{proof}

 \begin{lemma}
\label{lem:lawton2}
Suppose each non-zero term of $P$ has modulus at least $1$. 
   If $a\in\IZ^n$ such that
$\rho(a;\IZ^n)$ is sufficiently large in terms of $P$. Then
$|P(e(as))|$ is non-zero for all $s\in [0,1]$ 
outside a finite set of points. Moreover, if $\sv\in (0,1/2]$ then
   \begin{equation*}
 \int_0^1 \log|P(e(as))|ds = \int_0^1  f_\sv(as) ds + O\left(
     \sv^{1/(k-1)}|\!\log\sv|\right)
   \end{equation*}
where the implicit constant depends only on $P$ and $n$. 
 \end{lemma}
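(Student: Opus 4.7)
The plan is to reduce to a one-variable problem and then combine Lemma \ref{lem:lawtonthm1} with a layer-cake argument. Write $P=\sum_{i=1}^k c_i X^{m_i}$ with distinct exponents $m_i \in \IZ^n$ and $|c_i|\ge 1$, and consider
\begin{equation*}
Q(z) = \sum_{i=1}^k c_i z^{\langle a, m_i\rangle} \in \IC[z,z^{-1}],
\end{equation*}
which satisfies $|Q(e(s))|=|P(e(as))|$ for every $s\in\IR$.

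If $\rho(a;\IZ^n) > \max_{i\ne j} |m_i - m_j|$, which holds once $\rho(a;\IZ^n)$ is large in terms of $P$, the integers $\langle a, m_i\rangle$ are pairwise distinct. Hence $Q$ has exactly $k$ non-zero terms, and after multiplying by a suitable power of $z$ to obtain a genuine polynomial $\tilde Q\in \IC[z]$, its leading coefficient is one of the $c_i$ and so has modulus at least $1$. In particular $\tilde Q$ is a non-zero polynomial and therefore has only finitely many zeros on the unit circle, which yields the first claim that $P(e(as))$ is non-zero off a finite subset of $[0,1]$. Dividing $\tilde Q$ by its leading coefficient to obtain a monic polynomial with $k$ non-zero terms and applying Lemma \ref{lem:lawtonthm1}, I conclude
\begin{equation*}
\vol\bigl\{s \in [0,1) : |P(e(as))| < \sv\bigr\} \le C_k \sv^{1/(k-1)},
\end{equation*}
where the fact that the leading coefficient has modulus $\ge 1$ ensures that rescaling does not weaken the bound.

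For the integral estimate, the defining properties of $\phi_\sv$ give
\begin{equation*}
\log|P(e(as))| - f_\sv(as) = \bigl(1-\phi_\sv(|P(e(as))|^2)\bigr)\log|P(e(as))|
\end{equation*}
whenever $P(e(as))\ne 0$. Since $\phi_\sv$ equals $1$ on $[\sv^2,\infty)$ and lies in $[0,1]$, this difference is supported on the set $E=\{s\in[0,1) : |P(e(as))|<\sv\}$ and its absolute value is bounded there by $\bigl|\log|P(e(as))|\bigr|$. A layer-cake argument then gives
\begin{equation*}
\int_E \bigl|\log|P(e(as))|\bigr|\, ds = \int_{|\log \sv|}^\infty \vol\bigl(\{s\in[0,1) : |P(e(as))|<e^{-u}\}\bigr)\, du + |\log\sv|\cdot \vol(E).
\end{equation*}
The second term is $O(\sv^{1/(k-1)}|\log\sv|)$ by the measure bound, and the first term is at most $\int_{|\log\sv|}^\infty C_k e^{-u/(k-1)}\,du = O(\sv^{1/(k-1)})$ by the same bound applied with level $e^{-u}$. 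Combining yields the desired $O(\sv^{1/(k-1)}|\log\sv|)$ estimate.

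The main obstacle lies in the reduction to one variable: one must guarantee that $\tilde Q$ has exactly $k$ non-zero terms and a leading coefficient of modulus at least $1$, so that Lawton's one-dimensional estimate applies with the right constants. The first point is precisely where the largeness hypothesis on $\rho(a;\IZ^n)$ enters, while the second is where the normalization assumption on the coefficients of $P$ is used. Everything else is a routine layer-cake computation.
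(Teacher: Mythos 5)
Your proof is correct. The first claim is handled exactly as in the paper: the largeness of $\rho(a;\IZ^n)$ forces the pullback $z\mapsto P(e(as))$ to be a one-variable Laurent polynomial with the same $k$ coefficients as $P$, hence non-zero. For the integral estimate the paper simply cites Lawton's Lemma 4, whereas you supply a self-contained argument: normalize to a monic polynomial (this is where the hypothesis $|c_i|\ge 1$ enters, since dividing by the leading coefficient of modulus $\ge 1$ only shrinks the sublevel sets), invoke Lemma \ref{lem:lawtonthm1} to get $\vol\{s:|P(e(as))|<\sv\}\le C_k\sv^{1/(k-1)}$, observe that the difference $\log|P(e(as))|-f_\sv(as)=(1-\phi_\sv(|P(e(as))|^2))\log|P(e(as))|$ is supported on $E=\{|P(e(as))|<\sv\}$ and bounded there by $|\log|P(e(as))||$, and then run a layer-cake computation. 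The computation is sound: the constant piece contributes $|\log\sv|\cdot\vol(E)=O(\sv^{1/(k-1)}|\log\sv|)$ and the tail $\int_{|\log\sv|}^\infty C_k e^{-u/(k-1)}\,du=O(\sv^{1/(k-1)})$, giving the claimed bound. In effect you have reconstructed the content of Lawton's Lemma 4 that the paper takes as a black box.
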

 \begin{proof}
Say $a=(a_1,\ldots,a_n)$ and suppose
that $\rho(a;\IZ^n)$ is strictly larger than $|m-m'|$ for all distinct
$m,m'$
   in the support of $P$. Then $P(X^{a_1},\ldots,X^{a_n})$ 
has the same coefficients as $P$. It is in particular non-zero and 
the first claim holds true. 
For the second claim we can apply Lawton's Lemma 4 \cite{Lawton}.
 \end{proof}

 \begin{lemma}
\label{lem:lawton3}
   Let $\epsilon > 0$. 
If $\sv\in (0,1/2]$ then
   \begin{equation*}
     \left| \int_{[0,1)^n} (f_\sv(x)-\log |P(e(x))|)dx\right|
=O\left( \sv^{\frac{1}{2(k-1)}-\epsilon}\right)
   \end{equation*}
where the implied constant depends only on $n,P,$ and $\epsilon$. 
 \end{lemma}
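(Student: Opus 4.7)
The plan is to show that the integrand $f_\sv(x)-\log|P(e(x))|$ is supported on the sublevel set $S(P,\sv)$, and then bound its $L^1$-norm on that set by Hölder's inequality using Lemma \ref{lem:volSPep}.

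First I would verify that $f_\sv(x) = \log|P(e(x))|$ whenever $|P(e(x))|\ge \sv$, directly from the definitions of $\phi_\sv$ and $\psi_\sv$, so that
\begin{equation*}
\int_{[0,1)^n}(f_\sv(x)-\log|P(e(x))|)\,dx = \int_{S(P,\sv)}(f_\sv(x)-\log|P(e(x))|)\,dx.
\end{equation*}
On the set $\{|P(e(x))|\le \sv/2\}$ we have $f_\sv(x)=0$, so the integrand is $-\log|P(e(x))|$. On the intermediate annulus $\sv/2<|P(e(x))|<\sv$ one has $f_\sv(x)=\phi_\sv(|P(e(x))|^2)\log|P(e(x))|$ with $\phi_\sv$ taking values in $[0,1]$, so the integrand is $(\phi_\sv(|P(e(x))|^2)-1)\log|P(e(x))|$. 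In either case the modulus is bounded by $|\!\log|P(e(x))||$.

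Next I would apply Hölder's inequality with exponents $q$ and $q/(q-1)$ for a free parameter $q>1$ to get
\begin{equation*}
\left|\int_{[0,1)^n}(f_\sv(x)-\log|P(e(x))|)\,dx\right|
\le \left(\int_{[0,1)^n}\bigl|\!\log|P(e(x))|\bigr|^q dx\right)^{\!1/q}\vol{S(P,\sv)}^{(q-1)/q}.
\end{equation*}
By Lemma \ref{lem:volSPep}(ii) the first factor is a finite constant $C(P,q)$, and by Lemma \ref{lem:volSPep}(i) we have $\vol{S(P,\sv)}=O(\sv^{1/(2(k-1))})$, so the right-hand side is $O(\sv^{(q-1)/(2q(k-1))})$ with an implied constant depending on $P,n,q$.

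Finally, given $\epsilon>0$, I would fix $q$ large enough that $\tfrac{q-1}{2q(k-1)}\ge \tfrac{1}{2(k-1)}-\epsilon$, which requires only $q\ge 1/(2\epsilon(k-1))$. Since $\sv\in(0,1/2]$, this gives the stated bound $O(\sv^{1/(2(k-1))-\epsilon})$ with an implied constant depending only on $n$, $P$, and $\epsilon$. There is no real obstacle here beyond bookkeeping: the interplay between the volume estimate in Lemma \ref{lem:volSPep}(i), whose exponent $1/(2(k-1))$ is slightly weaker than Lawton's optimal $1/(k-1)$, and the integrability of $|\!\log|P(e(\cdot))||^q$ from Lemma \ref{lem:volSPep}(ii) is exactly what makes the Hölder trick yield the desired exponent up to an arbitrarily small loss.
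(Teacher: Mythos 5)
Your argument is correct and is essentially the same as the paper's: both reduce the integral to the sublevel set $S(P,\sv)$ (the paper does this via the factor $\phi_\sv(|P(e(x))|^2)-1$, which is supported there and bounded by $1$ in modulus, while you restrict the domain explicitly), then apply H\"older's inequality to split off the finite $L^q$-norm of $\log|P(e(\cdot))|$ from Lemma \ref{lem:volSPep}(ii) against a power of $\vol{S(P,\sv)}$ controlled by Lemma \ref{lem:volSPep}(i), and finally tune the H\"older exponent to absorb the $\epsilon$-loss. Your exponent $q$ plays the role of the paper's conjugate exponent, so the two derivations are just re-parametrizations of the same estimate.
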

 \begin{proof}
Let $p>1$  and fix $q>1$ such that $1/p+1/q=1$. 
   By definition we get the equality in 
   \begin{alignat*}1
          &\left| \int_{[0,1)^n} (f_\sv(x)-\log |P(e(x))|)dx\right|
 = 
\left| \int_{[0,1)^n} (\phi_\sv(|P(e(x))|^2)-1)\log |P(e(x))|dx\right|\\
&\qquad\qquad\le 
\int_{[0,1)^n} |\phi_\sv(|P(e(x))|^2)-1|\left|\log |P(e(x))|\right|
dx \\ &\qquad\qquad\le
\left(\int_{[0,1)^n}|\phi_\sv(|P(e(x))|^2)-1|^pdx\right)^{1/p}
\left(\int_{[0,1)^n}\left|\log |P(e(x))|\right|^qdx\right)^{1/q}
   \end{alignat*}
we used the H\"older inequality in the last step; the final
integral is finite
by Lemma \ref{lem:volSPep}(ii).
As $\phi_\sv(|P(e(x))|^2) = 1$ if $|P(e(x))|\ge \sv$ we have
\begin{equation*}
\int_{[0,1)^n}|\phi_\sv(|P(e(x))|^2)-1|^p dx= 
\int_{S(P,\sv)}  |\phi_\sv(|P(e(x))|^2)-1|^p dx
\le\vol{S(P,\sv)}.
\end{equation*}
Recall $k\ge 2$. 
The current Lemma  follows from Lemma \ref{lem:volSPep}(i)
because we may suppose $1/(2p(k-1)) \ge 1/(2k-2) -\epsilon$.
 \end{proof}

 \begin{proof}[Proof of Proposition \ref{thm:lawtonquant}]
Without loss of generality, we may suppose that all non-zero
coefficients of $P$ have modulus at least $1$ and that $P$ is a
polynomial. We fix $b\ge n+1$ sufficiently large
and a sufficiently small
$\epsilon' > 0$ with 
\begin{equation}
\label{eq:chooseepprime}
-\frac{\gamma}{2(k-1)} + {\epsilon'\gamma} \le
-\frac{1}{4(k-1)}+\epsilon
\quad\text{where}\quad
\gamma = \frac{b-n}{2b + 1/(2k-2)}. 
\end{equation}
We fix  a step function
$\phi\in C^b(\IR)$ as above, abbreviate $H = \rho(a;\IZ^n)\ge 1$, and set $\sv = H^{-\gamma}$.  

For $H$ large enough
 we find $\sv \le 1/2$ and that $|\ma{P(X^{a_1},\ldots,X^{a_n})} - \ma{P}|$
equals
\begin{alignat*}1
&  \left|\int_{0}^1 \log |P(e(as))| ds - \int_{[0,1)^n}
\log |P(e(x))|dx \right| \\ &\qquad \le 
  \left|\int_{0}^1 f_\sv(as) ds - \int_{[0,1)^n}
f_\sv(x) dx \right| + 
  \left|\int_{0}^1 (\log |P(e(as))| - f_\sv(as)) ds\right| \\
&\qquad\qquad +
  \left|\int_{[0,1)^n} (f_\sv(x)-\log |P(e(x))| ) dx\right|.
\end{alignat*}
Then by Lemmas \ref{lem:lawton1},
\ref{lem:lawton2},  and \ref{lem:lawton3}, the final one applied to
a sufficiently small $\epsilon'$, this
 sum is in
\begin{equation*}
  O\left( \frac{|\!\log\sv|}{\sv^{2b}} \frac{1}{H^{b-n}}+
  \sv^{\frac{1}{2(k-1)}-\epsilon'}\right)
\end{equation*}
where the implied constant here and below depends only on  $b,P,\phi$,
and $\epsilon$. 
So the sum is in
\begin{equation*}
  O\left(\gamma (\log H)H^{2b \gamma +n-b}+
H^{-\frac{\gamma}{2(k-1)} + {\epsilon'\gamma}}\right)
\end{equation*}
The proposition follows 
for small enough $\epsilon'$ as $2b\gamma +n-b = -\gamma / (2k-2)$, cf. (\ref{eq:chooseepprime}).
 \end{proof}

\bibliographystyle{amsplain}
\bibliography{literature}


\end{document}